\documentclass[11pt, reqno]{amsart}
\usepackage{fullpage}

\newif\ifArxiv
\Arxivtrue

 \newif\ifHideFoot
\HideFoottrue

\ifHideFoot
\else
\usepackage[notref,notcite]{showkeys}
\fi 

\usepackage{amssymb,amsmath,amsthm,amscd,mathrsfs,graphicx, color}
\usepackage[cmtip,all,matrix,arrow,tips,curve]{xy}
 \usepackage{hyperref}
\usepackage[usenames,dvipsnames]{xcolor}
\usepackage{xypic}
\hypersetup{colorlinks=true,citecolor=ForestGreen,linkcolor=Maroon,urlcolor=NavyBlue}
 \usepackage{mathtools}
\usepackage{mathpazo}
\usepackage[normalem]{ulem}
\usepackage{thmtools}
\usepackage{cleveref}
\usepackage{enumitem}
\usepackage{tikz-cd}
  
\numberwithin{equation}{section}
\newtheorem{teo}{Theorem}[section]
\newtheorem{pro}[teo]{Proposition}
\newtheorem{lem}[teo]{Lemma}

\newtheorem{teoalpha}{Theorem}

\newtheorem{coralpha}[teoalpha]{Corollary}

\theoremstyle{definition}
\newtheorem{exa}[teo]{Example}
\newtheorem{dfn}[teo]{Definition}
\theoremstyle{remark}
\newtheorem{rem}[teo]{Remark}

\ifHideFoot

\newcommand{\Yano}[1]{}
\newcommand{\Shend}[1]{}

\else 

\newcommand{\marg}[1]{\normalsize{{
\color{red}\footnote{{\color{blue}#1}}}{\marginpar[\vskip
-.25cm{\color{red}\hfill\tiny\thefootnote$\implies$}]{\vskip
-.2cm{\color{red}$\impliedby$\tiny\thefootnote}}}}}
\newcommand{\Yano}[1]{\marg{(Yano) #1}}
\newcommand{\Shend}[1]{\marg{(Shend) #1}}

\fi

\newcommand{\m}[1]{\mathcal{#1}}
\newcommand{\OO}{\mathcal{O}}
\newcommand{\X}{\mathcal{X}}

\title[Foliations, slope stability, and positivity]{Foliations, slope stability, and positivity of log canonical bundles on Deligne--Mumford stacks}

\author{Sebastian Casalaina-Martin}
\address{University of Colorado, Department of Mathematics, 
Boulder, CO 80309, USA }
\email{casa@math.colorado.edu}

\author{Shend Zhjeqi}
\address{University of Michigan, Department of Mathematics, 
Ann Arbor, MI 48109, USA }
\email{shendzh@umich.edu}

\thanks{Research of the first named author is supported in part by a grant from the Simons Foundation (SFI-MPS-TSM-00013682). The second named author was partially supported by the Simons Collaboration grant Moduli of Varieties.}

\date{\today}

\begin{document}

\begin{abstract}
We generalize some results of Campana--P\u{a}un regarding foliations, slope stability, and positivity of log canonical bundles on smooth projective varieties to the case of smooth proper DM stacks admitting projective coarse moduli spaces.  This paper is the third in a series aiming to generalize results of Popa--Schnell and Wei--Wu on Viehweg hyperbolicity to the setting of DM stacks, and in particular, to certain KSBA moduli spaces.
\end{abstract}

\maketitle

\section*{Introduction}

In \cite{CPFol19}, Campana--P\u aun showed that for a smooth projective variety $X$ with a reduced normal crossing divisor $\Delta\subseteq X$, if some positive tensor power of $\Omega^1_X(\log \Delta)$ contains a subsheaf with big determinant, then the first Chern class of every torsion-free quotient sheaf of every positive tensor power of $\Omega^1_X(\log \Delta)$ is pseudo-effective.  This has a number of applications in birational geometry, including to the birational geometry of moduli spaces.  With the end goal of strengthening some of the  applications to moduli spaces, in this paper, we extend the results of Campana--P\u aun to the setting of smooth proper Deligne--Mumford (DM) stacks over $\mathbb C$ with projective coarse moduli space.  We note that in their paper, Campana--P\u aun actually consider the special case where such a stack $\mathcal X$ is the quotient of a smooth projective variety by the action of a finite group; i.e., $\mathcal X=[V/G]$ where $V$ is a smooth projective variety and $G$ is a finite group acting on $V$.

  In this paper, we extend \cite[Thm.~7.6, Thm.~1.2]{CPFol19} to the more general class of DM stacks mentioned above:

\begin{teoalpha}

\label{TA:SonCP-T4}
 Let $\mathcal X$ be a smooth proper integral DM stack over $\mathbb C$ with projective coarse moduli space,  and let $ \mathbf \Delta \subseteq \mathcal X$ be a reduced divisor with at worst
normal crossing singularities. Suppose that some positive tensor power of  $\Omega^1_{\mathcal X}(\log \mathbf \Delta)$  contains a subsheaf
with big determinant. Then the first Chern class of every torsion-free coherent quotient sheaf of every positive tensor power of $\Omega^1_{\mathcal X}(\log \mathbf \Delta)$ is  pseudo-eﬀective.
\end{teoalpha}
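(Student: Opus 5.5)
We reduce to the case treated by Campana--P\u aun, namely a global quotient $[V/G]$ with $V$ a smooth projective variety and $G$ finite, and then descend. Since $\mathcal X$ is smooth, proper, of finite type over $\mathbb C$, and has projective coarse moduli space, we will use the Kresch--Vistoli type results: $\mathcal X$ has a finite flat surjective cover by a smooth projective scheme, or even a presentation $\mathcal X\simeq [P/GL_n]$. The simplest route is the one Campana--P\u aun handle directly. We choose a finite surjective morphism $f\colon V\to \mathcal X$ from a smooth projective variety $V$ such that $f^{-1}(\mathbf\Delta)$ is (after further cover and resolution) a normal crossing divisor $\Delta_V$. We also want $f$ to be étale in codimension one away from $\mathbf\Delta$, or at least to satisfy a log ramification formula $f^*\Omega^1_{\mathcal X}(\log\mathbf\Delta)\hookrightarrow \Omega^1_V(\log \Delta_V)$ that is an isomorphism generically. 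We would obtain this from a Kawamata-type cover over the coarse space, normalized in the function field of a component of the stack, using Noohi/Kresch--Vistoli. Alternatively, we may work with a Galois closure so that $\mathcal X$ is dominated by $[V/G]$ via a proper birational-on-stacks morphism.

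The key steps, in order, are as follows.

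(1) \emph{Transfer the hypothesis.} Suppose $\mathcal L\subseteq (\Omega^1_{\mathcal X}(\log\mathbf\Delta))^{\otimes m}$ has big determinant. Then $f^*\mathcal L$ is a subsheaf of $(\Omega^1_V(\log\Delta_V))^{\otimes m}$ whose determinant is the pullback of a big line bundle under a generically finite map, hence big. Bigness on $\mathcal X$ means bigness of some power descended to the coarse space, and this is preserved under finite pullback.

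(2) \emph{Apply Campana--P\u aun on $V$.} Let $\mathcal Q$ be a torsion-free quotient of $(\Omega^1_{\mathcal X}(\log\mathbf\Delta))^{\otimes k}$. Then $f^*\mathcal Q$ modulo torsion is a torsion-free quotient of $f^*(\Omega^1_{\mathcal X}(\log\mathbf\Delta))^{\otimes k}$. Where $f^*\Omega^1(\log)$ is only a subsheaf of $\Omega^1_V(\log\Delta_V)$, we push out along the inclusion. Since the cokernel is supported on a divisor, $c_1$ changes by an effective divisor, in the correct direction. This yields that $c_1(f^*\mathcal Q/\mathrm{tors})$ is pseudo-effective by \cite[Thm.~1.2]{CPFol19}. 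To avoid that subtlety, it is cleaner to choose $f$ so that the comparison is an isomorphism; this is possible when $f$ is étale over $\mathcal X$ in codimension one, for instance if $V\to\mathcal X$ is a finite flat cover étale over the smooth locus, as in a stack with trivial generic stabilizer.

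(3) \emph{Descend pseudo-effectivity.} We have $f^*c_1(\mathcal Q)=c_1(f^*\mathcal Q)$ up to torsion contributions, which are effective. For a finite surjective $f$ of degree $d$, $f_*f^*\alpha=d\alpha$ on the coarse space, with rational coefficients. Pushforward of pseudo-effective classes is pseudo-effective, so $c_1(\mathcal Q)$ is pseudo-effective.

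We expect the main obstacle to be step (2)'s comparison of log cotangent sheaves under the cover. Over the locus where $f$ ramifies along divisors not contained in $\mathbf\Delta$, $f^*\Omega^1_{\mathcal X}$ is a proper subsheaf of $\Omega^1_V$. Quotients of the subsheaf do not directly give quotients of the bigger sheaf, and pseudo-effectivity on $V$ is only known for quotients of $\Omega^1_V(\log\Delta_V)^{\otimes k}$. We would first try to choose the cover étale over the stack in codimension one. Because $\mathcal X$ is smooth, ramification of $V\to\mathcal X$ is controlled by branch loci in $\mathcal X$ itself. If that fails, we would add the ramification divisor to the boundary, and use the orbifold (Campana) version of \cite{CPFol19}, stated for $G$-equivariant data and fractional boundaries, to absorb the discrepancy.
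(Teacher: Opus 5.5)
Your reduction fails at step (2), and this is the whole difficulty rather than a technicality. Put $E=f^*\Omega^1_{\mathcal X}(\log\mathbf\Delta)^{\otimes k}\subseteq E'=\Omega^1_V(\log\Delta_V)^{\otimes k}$. As soon as $f$ ramifies along a divisor not lying over $\mathbf\Delta$, the cokernel $E'/E$ is torsion with nonzero effective $c_1$. Let $K=\ker(E\to f^*\mathcal Q)$. The only torsion-free quotient of $E'$ with the same generic kernel is $Q''=E'/K^{\operatorname{sat}}$. Since $K^{\operatorname{sat}}\cap E=K$, one computes $c_1(Q'')=c_1(f^*\mathcal Q)+c_1\bigl(E'/(E+K^{\operatorname{sat}})\bigr)$; for $\mathcal Q=\Omega^1_{\mathcal X}(\log\mathbf\Delta)^{\otimes k}$ the excess is all of $c_1(E'/E)$. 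So $c_1(Q'')$ exceeds $c_1(f^*\mathcal Q)$ by an effective class. Pseudo-effectivity of $c_1(Q'')$, which is all Campana--P\u aun on $V$ gives you, therefore says nothing about $c_1(f^*\mathcal Q)$: the direction is opposite to what you assert.

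Pushing forward does not help. For a birational modification the excess is exceptional and pushforward kills it. Here it lies over the branch divisor, so you only learn that $\deg(f)\,c_1(\mathcal Q)$ plus a nonzero effective divisor is pseudo-effective. Enlarging the boundary by the branch divisor just moves the same excess down to $\mathcal X$. Your Galois-closure variant has the same defect, since $[V/G]\to\mathcal X$ is a root-type map along the branch divisor, not an isomorphism in codimension one. Steps (1) and (3) are fine.

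The cover that would avoid this, one étale in codimension one over $\mathcal X$ away from $\mathbf\Delta$, does not exist in general. When $\mathbf\Delta=\emptyset$, purity of the branch locus (checked on an étale atlas) makes such a cover étale, and then $\mathcal X\cong[\widetilde V/G]$ for a Galois closure $\widetilde V$. That is exactly the case the paper notes is immediate from the variety case. Concretely, take $n\ge 2$, $D\subseteq\mathbb P^n$ a smooth hypersurface of degree $d$, $r\ge 2$ with $\gcd(r,d)=1$ and $(1-\frac1r)d>n+1$, $\mathcal X=\sqrt[r]{(\mathbb P^n,D)}$, and $\mathbf\Delta=\emptyset$.
\begin{itemize}
\item $K_{\mathcal X}$ is big, so $\wedge^n\Omega^1_{\mathcal X}\subseteq(\Omega^1_{\mathcal X})^{\otimes n}$ meets the hypothesis.
\item But $\pi_1(\mathcal X)$ is $\pi_1(\mathbb P^n\setminus D)\cong\mathbb Z/d$ modulo the $r$-th power of a meridian, hence trivial.
\item So every finite $V\to\mathcal X$ from a smooth variety ramifies along a divisor.
\end{itemize}

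Your orbifold fallback would have to be a theorem about $f^*\Omega^1_{\mathcal X}(\log\mathbf\Delta)$ rather than $\Omega^1_V(\log\Delta_V)$, i.e.\ essentially the stack statement itself. The Campana--P\u aun orbifold framework is set up for log-smooth orbifold pairs on a smooth projective variety. A general $\mathcal X$ here can have a singular coarse space, stabilizers in codimension $\ge 2$, and a nontrivial generic stabilizer, and you give no mechanism to reduce to the log-smooth case.

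The paper avoids comparing cotangent sheaves under covers altogether. It reruns the Campana--P\u aun/Schnell argument on $\mathcal X$ itself:
\begin{itemize}
\item take a movable $\alpha$ with $c_1(\mathcal Q)\cdot\alpha<0$;
\item saturate the maximal $\alpha$-destabilizing subsheaf of $\mathcal T_{\mathcal X}(-\log\mathbf\Delta)$ to a foliation all of whose quotients have positive $\alpha$-slope;
\item prove this foliation is algebraic by Theorem~\ref{T:SonCP-T8};
\item derive a contradiction between the slope inequality and the pseudo-effectivity of $K_{\mathcal X/Z}+\mathbf\Delta^{hor}-\mathbf R(p)$ coming from Theorems~\ref{T:CKT21-7.3} and~\ref{T:CKT21-7.1}.
\end{itemize}
It descends to the coarse space only for statements about a single line bundle, where the stacky ramification is absorbed into the lc boundary $R+\Delta$. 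It uses finite flat covers only for the $h^0$ bound in Lemma~\ref{bound} and the Chow step, where ramification is harmless.
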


\Cref{TA:SonCP-T4} is proved in \Cref{T:SonCP-T4}.  We refer the reader to \cite{CMZpositivity} where we discuss the notion of big and pseudo-effective line bundles on DM stacks.  We recall  that the case of \Cref{TA:SonCP-T4} where $\mathcal X=[V/G]$ for a smooth projective variety $V$ with a finite group $G$ acting on $V$ was proven in \cite[Thm.~7.6, Thm.~1.2]{CPFol19}, and in fact follows immediately also from the case of smooth projective varieties.  See also \cite[Thm.~0.1]{CPT11} for a related result in the non-uniruled case.    
Taking determinants in \Cref{TA:SonCP-T4}, one obtains the following corollary (proved in \Cref{T:SonCP-T1}), which is our primary interest in \Cref{TA:SonCP-T4}:

\begin{coralpha}

\label{TA:SonCP-T1}

Let $\mathcal X$ and $\mathbf \Delta$ be as in \Cref{TA:SonCP-T4}.
If some positive tensor power of $\Omega_{\mathcal X}^1(\log \mathbf \Delta)$ contains a subsheaf with big determinant, then $K_{\mathcal X}+\mathbf \Delta$ is big. \qed 
\end{coralpha}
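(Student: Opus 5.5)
The plan is to deduce \Cref{TA:SonCP-T1} from \Cref{TA:SonCP-T4} by taking determinants, following Campana--P\u aun's argument on varieties. Write $\Omega:=\Omega^1_{\mathcal X}(\log \mathbf \Delta)$, a locally free sheaf of rank $n=\dim \mathcal X$ with $\det \Omega = \mathcal O_{\mathcal X}(K_{\mathcal X}+\mathbf \Delta)$. By hypothesis there are an integer $m>0$ and a subsheaf $\mathcal F\subseteq \Omega^{\otimes m}$ with $\det \mathcal F$ big. We may replace $\mathcal F$ by its saturation: this changes $\det\mathcal F$ only by an effective divisor, and big plus effective is big. So we may assume the quotient $\mathcal Q:=\Omega^{\otimes m}/\mathcal F$ is torsion-free. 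If $\mathcal Q=0$, then $\mathcal F=\Omega^{\otimes m}$ and $\det\Omega^{\otimes m}=(K_{\mathcal X}+\mathbf\Delta)^{\otimes mn^{m-1}}$ is big, so $K_{\mathcal X}+\mathbf\Delta$ is big.

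Otherwise, \Cref{TA:SonCP-T4} applied to the torsion-free quotient $\mathcal Q$ shows that $c_1(\mathcal Q)$ is pseudo-effective. On the smooth stack $\mathcal X$ the exact sequence $0\to\mathcal F\to\Omega^{\otimes m}\to\mathcal Q\to 0$ is additive on determinants. We therefore get
\[ mn^{m-1}(K_{\mathcal X}+\mathbf\Delta)= c_1(\det\mathcal F)+c_1(\mathcal Q), \]
which is big plus pseudo-effective, hence big. Since bigness of a $\mathbb Q$-line bundle is insensitive to positive multiples, $K_{\mathcal X}+\mathbf\Delta$ is big.

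The only genuine input beyond \Cref{TA:SonCP-T4} is the formal behavior of big and pseudo-effective classes on DM stacks, taken from \cite{CMZpositivity}. Three facts are needed: determinants exist and are additive in exact sequences of coherent sheaves on a smooth DM stack (via finite locally free resolutions); big plus pseudo-effective is big; and bigness is invariant under positive rational multiples and under adding effective divisors. I expect the main point of care to be the second fact. To check it, I would pass to the coarse space, or to a finite flat cover by a projective variety, where pullback preserves and reflects both notions, and then use the classical fact that the big cone is the interior of the pseudo-effective cone.
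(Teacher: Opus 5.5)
Your argument is correct and is essentially the paper's own: saturate the subsheaf (its determinant stays big), apply \Cref{TA:SonCP-T4} to the torsion-free quotient $\mathcal Q$, and conclude from $mn^{m-1}(K_{\mathcal X}+\mathbf\Delta)=c_1(\det\mathcal F)+c_1(\mathcal Q)$ that big plus pseudo-effective is big. One correction: do not justify determinants via finite locally free resolutions, since a general smooth DM stack is not known to have the resolution property, and the paper deliberately avoids determinants of arbitrary torsion sheaves. Your argument only needs determinants of torsion-free sheaves and of torsion quotients of torsion-free sheaves. These are defined via reflexive hulls away from a codimension-$2$ locus, as in \cite[Lem.~1.1]{CMZpositivity}, and that is what the paper uses.
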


One of the main applications of \Cref{TA:SonCP-T1} in the literature has been to the study of families of projective varieties $f:Y\to X$ (e.g., \cite{VZ02, KK08, Pat12, CPFol19, PS17, WW23}).  
The idea is that by considering variations of Hodge structures associated to  certain modifications of the family, one can sometimes construct Viehweg--Zuo sheaves, i.e., big subsheaves of tensor powers of $\Omega^1_X(\log \Delta)$, where $\Delta$ is the discriminant, which one assumes is a normal crossings  divisor.  
Popa--Schnell \cite{PS17}, building on a number of results in the literature, used the theory of Hodge modules to do this.  The upshot is that they were able to show that if $f:Y\to X$ is an algebraic fiber space of maximal variation between smooth projective varieties, $\Delta$ is any divisor containing the discriminant, and the geometric generic fiber of $f$ is of general type (or simply admits a good minimal model), then $K_X+\Delta$ is big.  Generalizing this result to Deligne--Mumford stacks is one of our primary motivations for proving \Cref{TA:SonCP-T4} and \Cref{TA:SonCP-T1}.  In forthcoming work, we will extend the techniques of Popa--Schnell to the case of the DM stacks considered here, which allows for the direct application of  \Cref{TA:SonCP-T4} and \Cref{TA:SonCP-T1}  to moduli stacks  that are not projective varieties, and that parameterize families of varieties with good minimal model, or families of KSBA stable pairs.
The present article is the third in a series with the aim of generalizing results of \cite{PS17, WW23}, on Viehweg hyperbolicity, to
the case of Deligne–Mumford stacks; the previous articles in this series are 
\cite{CMZpositivity, CMZslope_stability}.

\medskip 
Our proof of  \Cref{TA:SonCP-T4}  follows the proofs in \cite{CPFol19} and \cite{Schnell17Epi}.   The starting point is to proceed  by contradiction, which is to say, to assume that there is some torsion-free quotient $(\Omega^1_{\mathcal X}(\log \mathbf \Delta))^{\otimes N}\twoheadrightarrow \mathcal Q$ that is not pseudo-effective.  By definition, there would be a movable class $\alpha\in \operatorname{N}_1(\mathcal X)_{\mathbb R}$ 
 such that $c_1(\mathcal Q)\cdot \alpha<0$; we refer the reader to \cite{CMZslope_stability} where we discuss the notion of movable classes  on DM stacks. Dualizing the surjection $(\Omega^1_{\mathcal X}(\log \mathbf \Delta))^{\otimes N}\twoheadrightarrow \mathcal Q$, and using 
properties of $\alpha$-slope stability, one is able to construct a foliation $\mathcal F\subseteq \mathcal T_{\mathcal X}$ satisfying the condition that 
every nonzero quotient sheaf of $\mathcal F$ has positive $\alpha$-slope (see \Cref{S:SchnellProof}). We discuss foliations on DM stacks in  
\Cref{S:Foliations}; we refer the reader  to \cite{CMZslope_stability} where we discuss the notion of $\alpha$-slope stability on DM stacks, and where we prove the results used here regarding $\alpha$-slope stability for DM stacks.

Given this foliation, the next step is to establish that the foliation is algebraic (i.e., induced generically by the tangent spaces to the fibers of a rational map to a smooth projective variety).  In the special case of stacks $\mathcal X=[V/G]$, this was proven by Campana--P\u aun in  \cite[Thm.~1.1]{CPFol19}.  In this paper, we extend this result to the more general class of DM stacks considered here:

\begin{teoalpha}

\label{TA:SonCP-T8}

 Let $\mathcal X$ be a smooth proper integral DM stack over $\mathbb C$ with projective coarse moduli space, and let $\mathcal F\subseteq {\mathcal T}_{\mathcal X}$ be a foliation. 
 Suppose that there exists a movable class $\alpha\in \operatorname{N}_1(\mathcal X)_{\mathbb R}$ 
 such that every nonzero quotient
sheaf of $\mathcal F$ has positive $\alpha$-slope. Then $\mathcal F$ is an algebraic foliation.
 \end{teoalpha}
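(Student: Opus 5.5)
We reduce to the Campana--P\u aun criterion for quotient stacks $[V/G]$ \cite[Thm.~1.1]{CPFol19}, or more directly to the case of a smooth projective variety, by passing to a finite cover. The natural tool is a finite flat (or generically étale) cover by a scheme. Since $\mathcal X$ is a smooth proper DM stack with projective coarse moduli space, Kresch--Vistoli gives a smooth projective variety $V$ with a finite, generically étale, surjective morphism $p\colon V\to\mathcal X$. If $\mathcal X$ has trivial generic stabilizer this works directly. Otherwise we first rigidify, or note that the hypotheses only involve the generic structure. By Kresch--Vistoli, $\mathcal X$ is a quotient $[Z/\mathrm{GL}_n]$; combined with Bogomolov--Kollár-type constructions, this gives a finite surjective $p\colon V\to \mathcal X$ from a smooth projective $V$, étale over a dense open substack.

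\textbf{Step 1: pull back the foliation.} Over the open set $U\subseteq V$ where $p$ is étale, $dp$ identifies $\mathcal T_V|_U$ with $p^*\mathcal T_{\mathcal X}|_U$. Define $\mathcal F_V\subseteq \mathcal T_V$ as the saturation of $p^*\mathcal F|_U$. Integrability is local and étale-invariant, so $\mathcal F_V$ is a foliation.

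\textbf{Step 2: transport the slope hypothesis.} Define $\beta := p^*\alpha\in \operatorname{N}_1(V)_{\mathbb R}$, the pullback of curve classes, dual to pushforward of divisors, normalized so that $p_*\beta=\deg(p)\,\alpha$. Movability of $\beta$ follows from the characterization of movable classes as duals of pseudo-effective cones (BDPP, in the stacky form of \cite{CMZslope_stability}). For a pseudo-effective $D$ on $V$, $D\cdot\beta = p_*D\cdot\alpha\ge 0$, since pushforward preserves pseudo-effectivity \cite{CMZpositivity}.

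Next we show every quotient $\mathcal F_V\twoheadrightarrow \mathcal Q$ has $\mu_\beta>0$. Suppose some quotient had $\mu_\beta\le 0$, and take the maximal destabilizing, i.e. minimal-slope, quotient. We may take $p$ Galois over $\mathcal X$ after passing to a Galois closure; then the $G$-action preserves $\mathcal F_V$ and $\beta$. By uniqueness, the minimal-slope quotient (the last graded piece of the Harder--Narasimhan filtration) is $G$-invariant and therefore descends to a quotient of $\mathcal F$ on $\mathcal X$. Slopes scale by $\deg p$ under this correspondence, since $c_1$ of the pullback paired with $p^*\alpha$ gives $\deg(p)$ times the original. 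This contradicts the hypothesis. Making $G$-equivariant Harder--Narasimhan theory work relative to a movable class is exactly what \cite{CMZslope_stability} supplies.

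\textbf{Step 3: conclude.} By \cite[Thm.~1.1]{CPFol19} (via the movable-class version of Bogomolov--McQuillan), $\mathcal F_V$ is algebraic: its general leaf closures are the fibers of a rational map $V\dashrightarrow W$. To descend, take the closure in $V\times V$ of the graph of the leaf relation, and its image under $p\times p$ in $\mathcal X\times\mathcal X$. Because $\mathcal F_V$ is $G$-invariant, $G$ permutes the leaves. The rational map $V\dashrightarrow W$ can then be chosen $G$-equivariantly, for instance via the Chow variety of leaf closures, on which $G$ acts. It therefore descends to a rational map from $\mathcal X$ to a resolution of $W/G$, whose fibers are tangent to $\mathcal F$ at a general point. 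So $\mathcal F$ is algebraic.

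\textbf{Main obstacle.} I expect Step 2 to be the hardest: producing a Galois finite cover from a smooth projective variety and verifying that $\beta$ is movable, with compatible slopes. This is especially delicate when $\mathcal X$ has nontrivial generic stabilizer, where "generically étale" must be understood after rigidification. My first approach is to work on the coarse space and on $V$ via the duality between movable and pseudo-effective classes, using the projection formula for the finite map.
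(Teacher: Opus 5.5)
\textbf{There is a genuine gap in Step 2: the positivity you need on $V$ is false in general.} This is not just a matter of checking details.

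\textbf{Why the slope drops on $V$.} The differential $dp\colon \mathcal T_V\to p^*\mathcal T_{\mathcal X}$ is injective, but it is not surjective along the ramification divisor of $p$. Since $p$ is flat, $p^*\mathcal F$ is saturated in $p^*\mathcal T_{\mathcal X}$. Hence your $\mathcal F_V$ equals $dp^{-1}(p^*\mathcal F)$, a subsheaf of $p^*\mathcal F$ whose cokernel is torsion supported on the ramification locus. So $c_1(\mathcal F_V)=p^*c_1(\mathcal F)-E$ with $E\ge 0$. Moreover $E\neq 0$ wherever $\mathcal F$ is transverse to the branch divisor: locally, if $x=s^m$ and $\mathcal F=\langle\partial_x\rangle$, then $\mathcal F_V=\langle\partial_s\rangle=\langle s^{m-1}\partial_x\rangle$.

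\textbf{The same loss breaks your descent.} Let $\mathcal F_V\twoheadrightarrow\mathcal Q$ be the $G$-invariant minimal-slope quotient. Let $\mathcal Q_{\mathcal X}$ be the quotient of $\mathcal F$ obtained by descending the saturation of its kernel in $p^*\mathcal F$. Then $\mathcal Q\hookrightarrow p^*\mathcal Q_{\mathcal X}$, with cokernel supported on the ramification. So you only get $\mu_\beta(\mathcal Q)\le \deg(p)\,\mu_\alpha(\mathcal Q_{\mathcal X})$, which is the wrong inequality for a contradiction.

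\textbf{A concrete failure.} Take $\mathcal X=\mathbb P^1$ and $\mathcal F=\mathcal T_{\mathbb P^1}=\mathcal O(2)$, so the hypothesis holds. Let $V\to\mathbb P^1$ be a double cover by a curve of genus $g\ge 2$. Then $\mathcal F_V=\mathcal T_V$ has degree $2-2g<0$, and passing to a Galois closure only makes this worse.

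\textbf{You cannot fix this by choosing $V$ better.} By purity of the branch locus, a finite cover by a smooth $V$ that is \'etale in codimension one is \'etale. That forces $\mathcal X\cong[V'/G]$, which is exactly the case Campana--P\u{a}un already treat. In the genuinely new cases the cover must ramify in codimension one. An example is the order-$3$ root stack of $\mathbb P^2$ along a smooth conic: it is simply connected but has $\mu_3$ stabilizers, so it has no \'etale cover by a scheme. Nothing in your argument makes the branch divisor $\mathcal F$-invariant, and for $\mathcal F=\mathcal T_{\mathcal X}$ no divisor is invariant. Your Step 3 (equivariant descent via Chow varieties) is reasonable, but it has nothing to descend without a correct Step 2.

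\textbf{How the paper avoids this.} The paper never transfers the slope hypothesis to a cover; it runs the Campana--P\u{a}un argument on $\mathcal X$ itself.
\begin{enumerate}
\item It takes the Zariski closure $\mathcal W\subseteq\mathcal X\times\mathcal X$ of the analytic family of leaves.
\item It bounds $\dim\mathcal W$ via jets along the diagonal, which reduces to bounding $\sum_m h^0\bigl(\mathcal U,\mathcal M^{\otimes k}\otimes\operatorname{Sym}^m\mathcal F^\vee\bigr)$.
\item The vanishing for $m\ge\delta_0 k$ uses $\mu_\alpha^{\min}(\mathcal F)>0$ directly on the stack, via the stacky slope theory.
\item The finite flat cover $q\colon V\to\mathcal X$ enters only to bound the remaining terms from above, using Nakayama's model of $\mathbb P(q^*\mathcal F^\vee)$.
\end{enumerate}
That last step uses $q^*\mathcal F$ rather than a foliation on $V$, and needs only an upper bound on $h^0$. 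This is why ramification is harmless in the paper's argument.
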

 
 We prove \Cref{TA:SonCP-T8} in \Cref{T:SonCP-T8}.
We recall that the case where $\mathcal X=[V/G]$ for a smooth projective variety $V$ with a finite group $G$ acting on $V$ was proven in \cite[Thm.~1.1]{CPFol19}, and in fact follows  also from the case of smooth projective varieties.  
In the case where $\mathcal X$ is a smooth projective variety and the class $\alpha$ is the class of a curve obtained as the  complete intersection of  
(very) ample hypersurfaces, the result is due to 
Bogomolov--McQuillan \cite{BMfoliations16}.

 Our strategy of proof for \Cref{T:SonCP-T8} is to use the approach of Campana--P\u aun for \cite[Thm.~1.1]{CPFol19}, and can be summarized as follows.  One considers the family of analytic leaves of the foliation $\mathcal F$, viewed as a family of analytic stacks with total space contained in $\mathcal X^{\operatorname{an}}\times \mathcal X^{\operatorname{an}}$ (parameterized by the first projection), and then one takes the Zariski closure.  In general, one does not expect to have much control over this Zariski closure, but under the hypotheses on the $\alpha$-slope of $\mathcal F$ and its quotients, together with results about $\alpha$-slope and vanishing of global sections that we establish in \cite{CMZslope_stability}, extending result in the literature for projective varieties, one can make an asymptotic Riemann--Roch calculation to show that the dimension of the Zariski closure in question is minimal (i.e., dimension of $\mathcal X$ plus the rank of the foliation).  From this one obtains that the Zariski closure of the general analytic leaf is  algebraic of the same dimension, and one argues that one obtains a rational map $\mathcal X\dashrightarrow Z$ to the resolution of singularities of an appropriate  parameter space for the Zariski closure of the fibers of the foliation, exhibiting the fibers, generically,  as the leaves of the foliation.

  \medskip 
Returning to the proof of \Cref{TA:SonCP-T4}, once one has the rational map $p:\mathcal X\dashrightarrow Z$ to a smooth projective variety $Z$ whose fibers generically induce the foliation $\mathcal F$, in \Cref{S:SchnellProof}, we turn to the strategy of proof in \cite{Schnell17Epi}. 
Namely, one can approach the proof via an inductive argument on the dimension, by reducing to the fibers of $p$ (in \S \ref{S:SchnellProof} we carefully organize the induction and proof by contradiction, and are simply streamlining the discussion here for brevity).   The strategy is to consider the divisor $K_{\mathcal X/Z}+ \mathbf \Delta^{hor}+\mathbf R(p)$, where $\mathbf \Delta^{hor}$ is the horizontal part of $\mathbf \Delta$ with respect to $p$, and $\mathbf R(p)$ is the ramification divisor for $p$ (see \Cref{S:WPLRCB} where the terminology is reviewed).  On the one hand, one shows from a direct computation that, given the set-up, one has the inequality $(K_{\mathcal X/Z}+ \mathbf \Delta^{hor}+\mathbf R(p))\cdot \alpha<0$, so that $K_{\mathcal X/Z}+ \mathbf \Delta^{hor}+\mathbf R(p)$ is not pseudo-effective.
On the other hand, inductively,  considering the fibers, one can deduce
 that $K_{\mathcal X/Z}+ \mathbf \Delta^{hor}+\mathbf R(p)$ is pseudo-effective, giving the contradiction.
 
 \smallskip 
The latter argument, reducing to the fibers to obtain that $K_{\mathcal X/Z}+ \mathbf \Delta^{hor}+\mathbf R(p)$ is pseudo-effective, relies, in the case of  smooth projective varieties, on results of Claudon--Kebekus--Taji \cite{CKT21}, which extend results of  Lu \cite{lu2002refinedkodairadimensioncanonical}, Campana \cite{Campana2004Orbifolds}, Berndtsson--P\u aun \cite{BP08bergman},  H\"oring \cite{horing10positivity},  Campana--P\u aun \cite{CP15}, 
Fujino \cite{Fujino17}, and 
P\u aun--Takayama \cite{PT18positivity}.  
We generalize  a special case of these results to the case of DM stacks in \Cref{T:CKT21-7.3} and 
\Cref{T:CKT21-7.1}; as the results in the literature are made for  singular varieties, unlike most of the other arguments in this paper, we prove these results by reducing to statements on the coarse moduli spaces.

\subsection*{Acknowledgements}
The first named author thanks Mihnea Popa for  conversations on the topic, which led to this project.  He also thanks Jonathan Wise and David Rydh for helpful conversations about the geometry of stacks. The second named author thanks his advisor, Mircea Musta\c{t}\u{a}, for useful discussions and all the support provided.
 The authors are also grateful to the organizers of the Simons Collaborations on Moduli of Varieties Workshop at the University of Utah in November 2024, where their  work on this project began.

\section{Preliminaries}

\subsection{Terminology}

We work over the complex numbers $\mathbb C$.  
A \emph{variety} is an integral separated scheme of finite type over $\mathbb C$. 
An \emph{alteration} $X'\to X$ is a surjective projective generically
finite   morphism of schemes over $\mathbb C$.  
We use the definition of a \emph{Deligne--Mumford (DM) stack} in \cite[Def.~4.1]{LMB}. Note that this differs from the definition in \cite{stacks-project} in that there is the additional hypothesis in \cite[Def.~4.1]{LMB} that the diagonal be representable, separated, and quasi-compact.  We direct the reader to \cite[App.~B]{CMW18} for a discussion of the relationship among various definitions of DM stacks in the literature (see in particular \cite[Fig.~1]{CMW18}).  We emphasize that, with the definition of DM stack that we are using, a morphism from a scheme to a DM stack is schematic (representable by schemes); see e.g., \cite[Lem.~B.20 and Lem.~B.12]{CMW18}.

    The general set-up in this paper will be a smooth proper (resp.~separated) integral DM stack $\mathcal X$ of finite type over $\mathbb C$ with coarse moduli space $\pi: \mathcal X\to X$, with the added assumption that the algebraic space $X$ be a projective (resp.~quasi-projective) variety. 
We refer the reader to \cite{CMZslope_stability, CMZpositivity}, as well, where we discuss certain properties of such DM stacks.

\subsection{Resolution of rational maps}  
Here we provide a  variation on \cite[Prop.~4.3]{KTbir23} that we will use later:

\begin{pro}[{\cite[Prop.~4.3]{KTbir23}}]\label{P:KT23-4.4}
Let $f:\mathcal X\dashrightarrow \mathcal Y$ be a rational map of smooth integral separated DM stacks of finite type over $\mathbb C$.  There is $2$-commutative diagram of smooth integral separated DM stacks of finite type over $\mathbb C$: 
$$
\xymatrix{
&\widetilde {\mathcal X} \ar[rd]^{\tilde f} \ar[ld]_\mu&\\
\mathcal X\ar@{-->}[rr]^f&&\mathcal Y
}
$$
such that $\mu$ is birational, and can be chosen to be an isomorphism over any open substack $\mathcal U\subseteq \mathcal X$ over which $f$ is defined.  If $\mathcal X$ and $\mathcal Y$ are proper over $\mathbb C$, then $\widetilde {\mathcal X}$ is proper over $\mathbb C$, and both $\mu$ and $f$ are proper.  If $\mathcal X$ and $\mathcal Y$ have (quasi-)projective coarse moduli spaces, then so does $\widetilde {\mathcal X}$. 
\end{pro}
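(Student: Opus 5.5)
The plan is to resolve $f$ by taking the closure of its graph and then resolving singularities functorially. Let $\mathcal U\subseteq \mathcal X$ be the (dense) open substack of definition of $f$, and let $\Gamma\subseteq \mathcal X\times\mathcal Y$ be the closed substack given by the scheme-theoretic closure of the graph of $f|_{\mathcal U}$, with its reduced structure. Since $\mathcal X$ and $\mathcal Y$ are separated DM stacks of finite type, so is $\mathcal X\times \mathcal Y$, and hence so is $\Gamma$. Moreover $\Gamma$ is integral, because it is the closure of the integral substack $\Gamma_{\mathcal U}\cong\mathcal U$. The first projection $p_1:\Gamma\to\mathcal X$ is an isomorphism over $\mathcal U$, so it is birational.

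Next I would apply functorial resolution of singularities to $\Gamma$. In the form of Temkin, or Abramovich--Temkin--W{\l}odarczyk, it is compatible with smooth, and in particular étale, morphisms, so it descends to DM stacks. This produces a projective birational morphism $\rho:\widetilde{\mathcal X}\to\Gamma$ from a smooth integral separated DM stack. It is an isomorphism over the smooth locus of $\Gamma$, and that locus contains $\Gamma_{\mathcal U}\cong\mathcal U$. Set $\mu=p_1\circ\rho$ and $\tilde f=p_2\circ\rho$. Then $\mu$ is birational and an isomorphism over $\mathcal U$. The diagram $2$-commutes on $\mu^{-1}(\mathcal U)$ by construction, and this is all a diagram involving a rational map requires.

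For properness: if $\mathcal X$ and $\mathcal Y$ are proper, then $\Gamma$ is a closed substack of a proper stack and so is proper, and $\rho$ is projective and hence proper. Thus $\widetilde{\mathcal X}$ is proper over $\mathbb C$. Since the targets are separated, $\mu$ and $\tilde f$ are proper as well.

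The main obstacle is the statement about coarse moduli spaces. Suppose the coarse spaces $X$ and $Y$ are (quasi-)projective. Then $X\times Y$ is (quasi-)projective. The coarse space of $\mathcal X\times\mathcal Y$ maps to $X\times Y$ by a finite universal homeomorphism, so it is (quasi-)projective too. The coarse space of $\Gamma$ is finite over it, hence also (quasi-)projective.

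It remains to handle $\rho$, which is a composition of blow-ups of stacks along closed substacks. Here I would use that a blow-up $\mathrm{Bl}_{\mathcal Z}\Gamma\to\Gamma$ has a relatively ample invertible sheaf $\mathcal O(-E)$. Some power of this sheaf descends to the coarse space relative to the coarse space of $\Gamma$, and this follows from the fact that stabilizers act on fibers of a power trivially, via a Kresch--Vistoli type argument or Abramovich--Vistoli's results on tame stacks. So the induced map of coarse spaces is projective. Iterating gives that the coarse space of $\widetilde{\mathcal X}$ is projective over a (quasi-)projective variety, and is therefore (quasi-)projective.

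The care needed is in checking that relative ampleness passes to coarse spaces, using that formation of coarse spaces commutes with flat base change. Alternatively one can bypass this: first take a projective resolution of the coarse graph closure, then run the stacky resolution over it, and argue as in \cite[Prop.~4.3]{KTbir23}.
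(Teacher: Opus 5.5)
Your overall route is the paper's: close up the graph in $\mathcal X\times\mathcal Y$, resolve it, read off properness, and get (quasi-)projectivity of the coarse space from finiteness over the coarse space of $\mathcal X\times\mathcal Y$. However, your first step fails when $\mathcal Y$ has nontrivial stabilizers.

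The graph morphism $\mathcal U\to\mathcal U\times\mathcal Y$ is a base change of the diagonal of $\mathcal Y$. For a separated DM stack this diagonal is finite and unramified, but it is not a monomorphism, so the graph is not a closed immersion. Consequently the reduced closure $\Gamma$ of its image need not contain a copy of $\mathcal U$. Your claims that $\Gamma\cap p_1^{-1}(\mathcal U)\cong\mathcal U$ and that $p_1$ is an isomorphism over $\mathcal U$ are false in general. Here are two counterexamples.
\begin{itemize}
\item Take $\mathcal X$ a smooth variety, $\mathcal Y=BG$ with $G\neq 1$ finite, and $f$ the trivial torsor; this $f$ is even representable. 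The image of the graph is all of $\mathcal X\times BG$, which is already smooth. So your $\widetilde{\mathcal X}$ is $\mathcal X\times BG$, and $\mu$ is not an isomorphism over any dense open substack.
\item For the quotient map $\mathbb A^1\to[\mathbb A^1/\mu_2]$, the reduced image is $[\{y^2=x^2\}/\mu_2]$, with the involution swapping the two branches. This is singular over $0\in\mathcal U$, so your appeal to ``$\rho$ is an isomorphism over the smooth locus'' also breaks down.
\end{itemize}

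The fix is to use a closure that remembers $\mathcal U$. The morphism $\mathcal U\to\mathcal X\times\mathcal Y$ is representable, quasi-finite and separated. By Zariski's main theorem it factors as a dense open immersion $\mathcal U\hookrightarrow\Gamma'$ followed by a finite representable $\Gamma'\to\mathcal X\times\mathcal Y$; for instance, take $\Gamma'$ to be the normalization of $\mathcal X\times\mathcal Y$ in $\mathcal U$, constructed smooth-locally. Then $\Gamma'\to\mathcal X$ is an isomorphism over $\mathcal U$, because $\mathcal U\to\mathcal U\times\mathcal Y$ is already finite. This is exactly the point the paper delegates to \cite[Prop.~4.3(ii)]{KTbir23}.

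With $\Gamma'$ in place of $\Gamma$, the rest of your argument goes through. $\Gamma'$ is proper when $\mathcal X$ and $\mathcal Y$ are, and its coarse space is finite over that of $\mathcal X\times\mathcal Y$, hence (quasi-)projective. When $\mathcal Y$ is an algebraic space, which covers every later use in the paper since there $\mathcal Y=Z$ is a variety, the graph is a closed immersion and your argument is correct as written. For the final step, the paper simply cites \cite[Thm.~1.3]{CMZslope_stability}. Your descent of a power of $\mathcal O(-E)$ to the coarse spaces is a reasonable direct proof of that ingredient.
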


\begin{proof} 
As in the proof of \cite[Prop.~4.3]{KTbir23}, let $\widetilde {\mathcal X}$ be a resolution of singularities 
(e.g., \cite[Thm.~1.3]{CMZslope_stability}) of $\Gamma(\mathcal X \dashrightarrow \mathcal Y)$, the closure of the graph of $f$ in $\mathcal X\times_{\mathbb C}\mathcal Y$.  One obtains the commutative diagram in the proposition from   \cite[Prop.~4.3(ii)]{KTbir23}.    

If $\mathcal X$ and $\mathcal Y$ are proper over $\mathbb C$, then clearly $\mathcal X\times_{\mathbb C}\mathcal Y$ and  $\Gamma(\mathcal X \dashrightarrow \mathcal Y)$ are proper over $\mathbb C$, and then so is $\widetilde {\mathcal X}$ 
(see \cite[Thm.~1.3]{CMZslope_stability}).  Any morphism of proper stacks over $\mathbb C$ is proper (\cite[\href{https://stacks.math.columbia.edu/tag/0CPT}{Lem.~0CPT}]{stacks-project}), giving that $\mu$ and and $\tilde f$ are proper.

Finally, if $\mathcal X$ and $\mathcal Y$ have (quasi-)projective coarse moduli spaces, then so does $\mathcal X\times_{\mathbb C}\mathcal Y$, and consequently, we have the same for $\Gamma(\mathcal X\dashrightarrow \mathcal Y)$. 
Here we are using that if $\mathcal Z\subseteq \mathcal W$ is a closed embedding into an integral separated stack $\mathcal W$ of finite type over $\mathbb C$ with quasi-projective coarse moduli space $W$, then the coarse moduli space of $\mathcal Z$ is quasi-projective, as it admits a finite map to $W$ 
(e.g.,  \cite[Lem.~A.1]{CMZslope_stability}).
 The fact that $\widetilde {\mathcal X}$ has (quasi-)projective coarse moduli space then follows from  \cite[Thm.~1.3]{CMZslope_stability}. 
\end{proof}

\begin{rem} Note that in \Cref{P:KT23-4.4}, even if $\mathcal Y$ is proper with projective coarse moduli space, it is not necessarily the case that 
the rational map $f$ can be extended to a morphism over an open substack  $\mathcal U\subseteq \mathcal X$ with complement of codimension at least $2$.  Indeed, consider the case where $X$ is a smooth projective variety, $\mathcal Y\to X$ is a non-trivial root stack construction over a smooth divisor on $X$, and $f:X\dashrightarrow \mathcal Y$ is the rational inverse.  
\end{rem}

\begin{rem}
Recall  \cite[Prop.~4.2]{KTbir23} that if $g : \mathcal X'\to \mathcal X$  is  a \emph{representable} proper birational
morphism of DM stacks of finite type over $\mathbb C$, with $\mathcal X'$ reduced and $\mathcal X$ normal, then the maximal open substack $\mathcal U \subseteq \mathcal  X$, over which $g$ restricts to an isomorphism $g^{-1}(\mathcal U)\to \mathcal U$ 
has complement everywhere of codimension $\ge  2$ in $\mathcal X$.  With this observation, the example of the root stack $f:X\dashrightarrow \mathcal Y$ in the previous remark shows that one cannot expect $\mu$ and  $\tilde f$ to be representable in \Cref{P:KT23-4.4}, even if $\mathcal Y$ is proper with projective coarse moduli space.

\end{rem}

\begin{rem}\label{R:KT23-4.4-Yprop}
If in \Cref{P:KT23-4.4} one assumes that $\mathcal Y=Y$ is a proper variety, 
then  there is an open substack $\mathcal U \subseteq \mathcal  X$ with complement of codimension $\ge 2$ such that $f$ restricts to a morphism on $\mathcal U$.  Indeed, let $p:U\to \mathcal X$ be an \'etale presentation.  The composition $U\to \mathcal X \dashrightarrow Y$ gives a rational map $f':U\dashrightarrow Y$.  Let $U'\subseteq U$ be an open subset such that there is a morphism $f':U'\to Y$ inducing $f':U\dashrightarrow Y$.  Since $Y$ is a proper variety, there is an open subset $U'\subseteq U''\subseteq U$ with the complement of $U''$ in $U$ of codimension $\ge 2$ such that we have a morphism $f':U''\to Y$ inducing  $f':U\dashrightarrow Y$.  As morphisms extend uniquely, the morphism $f': U''\to Y$ descends to a morphism $f:\mathcal U\to Y$, where $\mathcal U=p(U'')\subseteq \mathcal X$, inducing the rational map $f:\mathcal X\dashrightarrow Y$.   
\end{rem}

\section{Algebraicity of foliations}\label{S:Foliations}

 The proof of \cite[Thm.~7.6]{CPFol19} given in \cite[Thm.~1]{Schnell17Epi}  depends on \cite[Thm.~1.1]{CPFol19}. 
 In order to prove \Cref{TA:SonCP-T4}, generalizing \cite[Thm.~7.6]{CPFol19}, 
 in this subsection, we generalize \cite[Thm.~1.1]{CPFol19} to DM stacks.  
Recall that a foliation ${F}$ on a smooth variety $X$ is a coherent subsheaf $F\subseteq T_X$ of the tangent sheaf with the following properties:  (i) $F$ is closed under the Lie bracket, and
(ii) the quotient $T_X/F$ is torsion free.  Note that on a smooth DM stack $\mathcal X$, there is a well-defined Lie bracket $[-,-]$ on the tangent sheaf $\mathcal T_{\mathcal X}$.   Indeed, one can see this on an \'etale cover of $\mathcal X$ by smooth schemes, as \'etale covers give identifications of tangent spaces, and so  $[- , - ]$ descends to a morphism on the stack.   Consequently, we use the same definition of a foliation on a smooth DM stack:

\begin{dfn}[Foliation]
    A \emph{foliation} $\m{F}$ on a smooth DM stack $\X$ over $\mathbb C$ is a coherent subsheaf $\m{F}\subseteq T_{\X}$ with the following properties: 
    \begin{enumerate}[label=(\roman*)]
 \item $\m{F}$ is closed under the Lie bracket, and

   \item  The quotient $\mathcal T_{\X}/\m{F}$ is torsion free; i.e., $\mathcal F$ is saturated in $\mathcal T_{\mathcal X}$.
\end{enumerate}
\end{dfn}

\begin{rem}[Leaves of a foliation]\label{R:Leaves} 
Associated to $\mathcal X$ is a complex analytic stack $\mathcal X^{\operatorname{an}}$, with coarse moduli space $X^{\operatorname{an}}$, the analytic space associated to the coarse moduli space $X$ of $\mathcal X$.  A foliation $\mathcal F$ on $\mathcal X$ induces a foliation $\mathcal F^{\operatorname{an}}\subseteq \mathcal T_{\mathcal X^{\operatorname{an}}}$ on $\mathcal X^{\operatorname{an}}$; i.e., a saturated coherent subsheaf of the tangent sheaf, closed under bracket.  From the holomorphic version of the Frobenius theorem, if $\mathcal U\subseteq \mathcal X$ is the locus where $\mathcal F$ is locally free,  and we denote by $r$ the rank of $\mathcal F$, then  there is a decomposition of $\mathcal U^{\operatorname{an}}$ into a union of disjoint smooth connected analytic substacks $\{L_\alpha \}_{\alpha \in A}$ of $\mathcal U^{\operatorname{an}}$, called \emph{the leaves of the foliation}, with the following property: for every $\alpha$ we have $\mathcal F^{\operatorname{an}}|_{L_\alpha}\cong \mathcal T_{L_\alpha}$, and every point $x$ in  $\mathcal U^{\operatorname{an}}$ has an \'etale neighborhood $$p_x:\Omega_x\to \mathcal U^{\operatorname{an}}$$ and a 
holomorphic submersion $$\rho_x:\Omega_x\to \mathbb C^{n-r},$$ with connected fibers, such that for each $y\in \Omega_x$, the pre-image $p_x^{-1}(L_{p(y)}\cap p_x(\Omega_x))$ of the  intersection  of the leaf $L_{p_x(y)}$ of $\mathcal F^{\operatorname{an}}$ passing through $p_x(y)$ with $p_x(\Omega_x)$, is given by the fibers of $\rho_x$ containing the points in the preimage $p_x^{-1}(p_x(y))$.
 In other words, the leaves integrate $\mathcal F^{\operatorname{an}}|_{\mathcal U^{\operatorname{an}}}$, viewed as a distribution, and the leaves are locally the fibers of a holomorphic submersion.  Note that if we want, we are free to take $\Omega_x$ to be an analytic open inside of the analytification of an \'etale morphism $U_x\to \mathcal X$.
\end{rem}

Following \cite[Def.~4.1]{CPFol19}, and adapting it to the case of DM stacks, we make the following definition:

\begin{dfn}[Algebraic foliation]\label{D:AlgFol}
A foliation $\mathcal F\subseteq \mathcal T_{\mathcal X}$ on a smooth proper integral DM stack $\mathcal X$ over $\mathbb C$ with projective coarse moduli space is \emph{algebraic} if there exists a dominant rational map
$$
p:\mathcal X\dashrightarrow Z
$$
to a smooth  projective variety $Z$, such that there exists a dense  open substack $\mathcal U\subseteq \mathcal X$ over which $p$ is defined, so that denoting by $p:\mathcal U\to Z$ the induced morphism, we have 
\begin{equation}\label{E:AlgFolTXZ}
\mathcal F|_{\mathcal U} = \ker \left(Tp : {\mathcal T}_{\mathcal X}|_{\mathcal U}=\mathcal T_{\mathcal U} \to p^*{ T}_{Z}\right)=:\mathcal T_{\mathcal U/Z}.
\end{equation}
\end{dfn}

\begin{rem}\label{R:AlgFol}
Abusing notation, in the situation of \eqref{E:AlgFolTXZ}, we will often for convenience write that $$\mathcal F=\mathcal T_{\mathcal X/Z}$$ generically, even though $\mathcal T_{\mathcal X/Z}$ is not well-defined on $\mathcal X$.
\end{rem}

\begin{rem}\label{R:AlgFolRefin}
Replacing $Z$ in \Cref{D:AlgFol} by a smooth proper DM stack $\mathcal Z$ with projective coarse moduli space $Z$ gives an equivalent definition, as one can take a resolution of singularities of the coarse moduli space $Z$.
 We also observe that the leaves of an algebraic foliation are algebraic. 
 Indeed, let $\mathcal U'\subseteq \mathcal X$ be the locus where $p$ is defined, and, therefore where $\mathcal T_{\mathcal X/Z}$ is defined.  By \Cref{R:KT23-4.4-Yprop} we may assume that the complement of $\mathcal U'$ in $\mathcal X$ is of codimension $\ge 2$.  Note that $\mathcal T_{\mathcal X/Z}$ is saturated in $\mathcal T_{\mathcal X}|_{\mathcal U'}$,   as the quotient is a subsheaf of the locally free sheaf $p^*T_Z$, and is therefore torsion-free.  While we only required in \Cref{D:AlgFol} that $\mathcal F$ and $\mathcal T_{\mathcal X/Z}$ agree on an open substack $\mathcal U\subseteq \mathcal U'$, in fact,  we have $\mathcal F|_{\mathcal U'}=\mathcal T_{\mathcal U'/Z}$, as they are both saturated subsheaves of the vector bundle $\mathcal T_{\mathcal X}|_{\mathcal U'}$, which agree on a dense open subset (e.g., \Cref{R:ExtSatUnique}).  It follows from this that the leaves are algebraic, given by the fibers of $p$.  

\end{rem}

\begin{rem}\label{R:ExtSatUnique}
For lack of a better reference, we include the elementary argument that if $A$ is an integral domain with fraction field $K$, $M$ is an $A$-module, and $M',M''\subseteq M$ are saturated $A$-submodules such that $M'\otimes_A K = M''\otimes _AK\subseteq M\otimes_A K$, then $M'=M''$.  Indeed, we consider the short exact sequences
$$
\xymatrix{
0 \ar[r]& M' \ar[r]  \ar[d]& M\ar[r] \ar@{=}[d] & M/M' \ar[r] \ar[d]&0\\
0 \ar[r]& M'+M'' \ar[r] & M\ar[r] & M/(M'+M'') \ar[r]&0\\
}
$$  
By assumption we have that $(M'+M'')/M'$ is a torsion module (it is zero after tensoring $-\otimes_AK$).  On the other hand, by the Snake Lemma, it is isomorphic to a submodule of $M/M'$, which is torsion-free, by assumption.  So $(M'+M'')/M'=0$; i.e., $M'=M'+M''$.  But then we must have $M''\subseteq M'$.  Repeating the argument with $M''$ in the place of $M'$, we see that $M'=M''$.
\end{rem}

We now prove the following generalization of \cite[Thm.~1.1]{CPFol19}:

\begin{teo}

\label{T:SonCP-T8}

 Let $\mathcal X$ be a smooth proper integral DM stack over $\mathbb C$ with projective coarse moduli space, and let $\mathcal F\subseteq {\mathcal T}_{\mathcal X}$ be a foliation. 
 Suppose that there exists 
 a movable class $\alpha\in \operatorname{N}_1(\mathcal X)_{\mathbb R}$ 
 such that every nonzero quotient
sheaf of $\mathcal F$ has positive $\alpha$-slope. Then $\mathcal F$ is an algebraic foliation.
 \end{teo}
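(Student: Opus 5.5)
We follow the strategy of Campana--P\u aun for \cite[Thm.~1.1]{CPFol19}, set up on the product $\mathcal X\times_{\mathbb C}\mathcal X$, which is again a smooth proper DM stack with projective coarse moduli space. Let $r$ be the rank of $\mathcal F$ and $n=\dim\mathcal X$. On the locus $\mathcal U$ where $\mathcal F$ is locally free, the leaves of \Cref{R:Leaves} assemble into an analytic substack $\mathcal L\subseteq \mathcal U^{\operatorname{an}}\times \mathcal X^{\operatorname{an}}$ of dimension $n+r$. Its points are the pairs $(x,y)$ with $y$ on the leaf through $x$, and it is fibered over the first factor. Let $\mathcal W\subseteq \mathcal X\times\mathcal X$ be its Zariski closure, an integral closed substack. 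The theorem reduces to showing $\dim\mathcal W=n+r$. Granting this, the general fiber of $\mathrm{pr}_1\colon\mathcal W\to\mathcal X$ is an algebraic stack of dimension $r$ containing the analytic leaf as an open piece. Using the Chow variety (or Hilbert scheme) of the coarse space of $\mathcal X$, these fibers define a rational map $\mathcal X\dashrightarrow \operatorname{Chow}$, $x\mapsto[\mathcal W_x]$. We take $Z$ to be a resolution of the closure of its image, which by \Cref{R:AlgFolRefin} is harmless. The fibers of $\mathcal X\dashrightarrow Z$ are generically the leaves, so $\mathcal F=\mathcal T_{\mathcal X/Z}$ on a dense open substack, as required by \Cref{D:AlgFol}. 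Here we use saturatedness and \Cref{R:ExtSatUnique} to compare the two subsheaves.

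To bound $\dim\mathcal W$ we argue by asymptotic Riemann--Roch, comparing sections of a power of an ample line bundle along $\mathcal W$ with jets along the leaves. Fix a line bundle $\mathcal A$ on $\mathcal X$ pulled back from an ample bundle on the coarse space, and set $\mathcal A_{\boxtimes}=\mathrm{pr}_2^*\mathcal A$ on $\mathcal X\times\mathcal X$. Consider the sheaf $\mathrm{pr}_{1*}(\mathcal A_{\boxtimes}^{m}|_{\mathcal L^{(k)}})$. Here $\mathcal L^{(k)}$ is the $k$-th infinitesimal neighborhood of the diagonal inside the leaf family, that is, $k$-jets along the leaves. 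It carries a filtration with graded pieces $\operatorname{Sym}^j\mathcal F^\vee\otimes\mathcal A^m$ for $j\le k$. Restriction gives a map
$$H^0(\mathcal X\times\mathcal X,\mathcal A_{\boxtimes}^m\otimes\mathrm{pr}_1^*\mathcal A^{m'})\to H^0\bigl(\mathcal X,\mathrm{pr}_{1*}(\mathcal A_{\boxtimes}^m|_{\mathcal L^{(k)}})\otimes\mathcal A^{m'}\bigr).$$
Any section vanishing on $\mathcal W$ vanishes on all jets along leaves. Conversely, a section vanishing to order $k\gg m$ along leaves must vanish on $\mathcal W$ if $\dim\mathcal W>n+r$, since the leaves would then not fill $\mathcal W$. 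The positivity hypothesis enters as follows. Every nonzero quotient of $\mathcal F$ has positive $\alpha$-slope, so every subsheaf of $\operatorname{Sym}^j\mathcal F^\vee$ has very negative $\alpha$-slope, roughly $\le -j\varepsilon$ for some $\varepsilon>0$. Here we invoke the results of \cite{CMZslope_stability} on $\alpha$-semistability of symmetric powers (the stack version of the tensor-product theorem for movable classes) and on vanishing of $H^0$ for sheaves whose maximal $\alpha$-slope is negative. Consequently, for $j\ge Cm$ we get $H^0(\operatorname{Sym}^j\mathcal F^\vee\otimes\mathcal A^{m}\otimes \mathcal A^{m'})=0$. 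So only jets of order $\le Cm$ contribute, giving $h^0\lesssim m^{n+r}$ for the jet side.

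Counting now gives the bound. The space of sections of $\mathcal A_{\boxtimes}^m\otimes\mathrm{pr}_1^*\mathcal A^{m'}$ restricted to $\mathcal W$ grows like $m^{\dim\mathcal W}$, and it injects into the jet space once $k\ge Cm$. Since the jet side grows like $m^{n+r}$, we conclude $\dim\mathcal W\le n+r$. On the stack, Riemann--Roch growth is computed on the coarse space or an \'etale-local presentation, using \cite{CMZpositivity}. The main obstacle is making this injectivity-versus-growth comparison rigorous for the stacky analytic leaf family. In particular, one must show that a section vanishing to high order along leaves through general points vanishes on $\mathcal W$, and that the vanishing of sections of the jet graded pieces holds uniformly in $j$ and $m$ with a linear threshold. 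We would handle this \'etale-locally via the charts $\Omega_x$ of \Cref{R:Leaves}, then descend the uniform slope estimates from \cite{CMZslope_stability}. In this way the argument reduces, where possible, to Campana--P\u aun's proof on an \'etale atlas, with the global vanishing statements supplied on the stack.
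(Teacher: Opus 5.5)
Your outline is the paper's (Campana--P\u aun's) argument: the Zariski closure $\mathcal W$ of the leaf family, jets along the diagonal with graded pieces $\operatorname{Sym}^j\mathcal F^\vee$, slope vanishing beyond a linear threshold, and then a Chow map. However, the core of the proof, the bound $\dim\mathcal W\le n+r$, is not established. You assert that the jet side is $O(m^{n+r})$ by a rank heuristic, and this is exactly where a new ingredient is needed.

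What is required is $\sum_{j\le Cm}h^0\bigl(\mathcal U,(\operatorname{Sym}^j\mathcal F^\vee\otimes\mathcal A^{\otimes 2m})|_{\mathcal U}\bigr)=O(m^{n+r})$. That is, each term needs an $O(m^{n+r-1})$ bound, uniform in $j\le Cm$, for sheaves that are only locally free on $\mathcal U$ and whose ranks grow with $j$. \'Etale charts are not proper, so they give no global $h^0$ bounds. Naive devices lose the exponent: bounding through an embedding of $\mathcal F^\vee$ into a split bundle of rank $N>r$ gives only $O(m^{n+N})$.

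The paper obtains the bound from \Cref{bound}\ref{bound:b}--\ref{bound:c}. One pulls back along a finite flat cover $q:V\to\mathcal X$ by a smooth projective variety and uses Nakayama's construction of $p:Y\to V$ with $\dim Y=n+r-1$. This comes with a line bundle $B$ such that $p_*B^{\otimes j}=(\operatorname{Sym}^j q^*\mathcal F^\vee)^{\vee\vee}$ for \emph{all} $j$ at once; achieving this requires correcting the pulled-back tautological bundle by an effective $p$-exceptional divisor. Take $H$ very ample with $H\otimes B^{-1}$ effective. Then each term is at most $h^0\bigl(Y,(p^*q^*\mathcal M\otimes H^{\otimes\delta_0})^{\otimes k}\bigr)=O(k^{n+r-1})$, and summing over at most $\delta_0k+1$ values of $j$ gives $O(k^{n+r})$.

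By contrast, the linear-threshold vanishing you list as an obstacle is immediate. For $j\ge\delta_0 k$ one has $\mu^{\max}_\alpha\bigl(\mathcal M^{\otimes k}\otimes(\operatorname{Sym}^j\mathcal F^\vee)^{\vee\vee}\bigr)=k\,c_1(\mathcal M)\cdot\alpha-j\,\mu^{\min}_\alpha(\mathcal F)<0$.

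Two further points. First, your injectivity justification (``must vanish on $\mathcal W$ if $\dim\mathcal W>n+r$, since the leaves would then not fill $\mathcal W$'') gives the wrong reason, and no dimension hypothesis enters. The correct chain is as follows:
\begin{enumerate}
\item[(i)] Vanishing of the graded pieces for $j\ge Cm$ forces a section whose $Cm$-jet vanishes to vanish to infinite order along the diagonal.
\item[(ii)] The identity theorem on the connected local plaque families $\{(z,w)\in\mathbf\Omega_i\times\mathbf\Omega_i: w\text{ in the plaque of }z\}$ then gives vanishing on $\mathbf\Lambda$.
\item[(iii)] Zero loci are Zariski closed, so the section vanishes on $\mathcal W$.
\end{enumerate}
Use this local family, which is an honest $(n+r)$-dimensional submanifold near the diagonal. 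The global set $\{(x,y):y\in L_x\}$ is not an analytic subspace when the leaves are not closed. Also take $m'$ proportional to $m$, since $\mathrm{pr}_2^*\mathcal A$ alone is not ample on $\mathcal W$.

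Second, to conclude that the fibers of $x\mapsto[\mathcal W_x]$ are the leaves, you need the general fiber of $\mathrm{pr}_1:\mathcal W\to\mathcal X$ to be irreducible. Otherwise $\mathcal W_x$ may contain components besides $\overline{L_x}$. The paper proves this in three steps:
\begin{enumerate}
\item[(i)] It lifts $\mathbf\Lambda$ to the normalization $\hat{\mathcal W}$.
\item[(ii)] It uses the diagonal as a section of $\hat{\mathcal W}\to\mathcal X$, together with Stein factorization, to get connected general fibers.
\item[(iii)] It passes to coarse spaces, where a surjection of normal projective varieties with connected general fiber has integral general fiber.
\end{enumerate}
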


\begin{proof}
    We closely follow the proof by \cite{CPFol19} and make necessary adjustments along the way.   While the proof is fairly lengthy, the strategy of the proof is easy to describe:  One considers the analytic family $\mathbf \Lambda$ of leaves of the foliation, parameterized by the points of $\mathcal X^{\operatorname{an}}$ where the leaves are defined, as a family with total space contained in the product $\mathcal X^{\operatorname{an}}\times \mathcal X^{\operatorname{an}}$.  One then takes the Zariski closure of $\mathbf \Lambda$ in   $\mathcal X^{\operatorname{an}}\times \mathcal X^{\operatorname{an}}$ to obtain an algebraic substack $\mathcal W$ of $\mathcal X\times \mathcal X$.   The assumptions on the $\alpha$-slope of the foliation made in \Cref{T:SonCP-T8} imply that the dimension of $\mathcal W$ is equal to the dimension of $\mathbf \Lambda$ ($=\dim \mathcal X+\operatorname{rk}\mathcal F$); this is the heart of the matter, and is proved in \Cref{L:dimVdimLL} (which relies on \Cref{bound}).  One then shows that $\mathcal W$ is irreducible, and that the general fiber of the first projection $\mathcal W\to \mathcal X$ is also irreducible, so that the general fiber is the Zariski closure of an analytic leaf.  Finally, viewing the morphism $\mathcal W\to \mathcal X$ as a family of substacks of $\mathcal X$, one uses Chow varieties to obtain a rational map $\mathcal X\dashrightarrow Z$ to a smooth projective variety with general fiber given by leaves of the foliation, which are algebraic. 
    
  We now provide the details of the proof in four steps: 
    
 \subsubsection*{Step 1: Constructing the analytic family of leaves $\mathbf \Lambda$}
    Let $\mathbf E:=\operatorname{Sing}(\mathcal{F})\subseteq \mathcal{X}$, be the locus where $\mathcal F$ is not locally free, let $\mathcal{U}:=\mathcal{X}-\mathbf E$, and let $r:=\operatorname{rk} \mathcal F$.
    As in \Cref{R:Leaves}, every point $x$ in  $\mathcal U^{\operatorname{an}}$ has an \'etale neighborhood $p_x:\Omega_x\to \mathcal U^{\operatorname{an}}$ and a 
holomorphic submersion $\rho_x:\Omega_x\to \mathbb C^{n-r}$, with connected fibers, such that for each $y\in \Omega_x$, the pre-image of the  intersection $p_x^{-1}(L_{p(y)}\cap p_x(\Omega_x))$ of the leaf $L_{p_x(y)}$ of $\mathcal F^{\operatorname{an}}$ passing through $p_x(y)$ with $p_x(\Omega_x)$, is given by the fibers of $\rho_x$ containing the points in the preimage $p_x^{-1}(p_x(y))$. 
Denote by $\mathbf \Omega_x=p_x(\Omega_x)\subseteq \mathcal U^{\operatorname{an}}$ the image of $p_x$, which is open since $p_x$ is \'etale.    
     Let $\{\mathbf {\Omega}_i\}_{i\in I}$ be a countable locally finite subcover of $\mathcal{U}^{\operatorname{an}}$ from the open cover $\{\mathbf \Omega_x\}_{x\in \mathcal{U}^{\operatorname{an}}}$. 
     Let 
   \begin{align*}
  \mathbf {\Omega}&:=\bigcup_{i\in I} 
  (\mathbf {\Omega}_i \times \mathbf {\Omega}_i) \subseteq \mathcal{U}^{\operatorname{an}}\times \mathcal{U}^{\operatorname{an}}\\
  \mathbf \Lambda&:=\{(z,w)\in \mathbf  \Omega \, \mid \, \exists \, i\in I \, \text{ s.t. }  \, z\in \mathbf  \Omega_i, \,\, w \in L_z\cap \mathbf  \Omega_i\}\\
  & = \bigcup_{i\in I} \{(z,w)\in \mathbf \Omega_i\times \mathbf \Omega_i \mid w\in L_z\}\subseteq \mathbf {\Omega}\subseteq \mathcal U^{ \operatorname{an}}\times \mathcal U^{\operatorname{an}}
\end{align*}
    where $L_z$ is the leaf of $\mathcal{F}^{\operatorname{an}}$ containing $z$.  In other words, $\mathbf \Omega\subseteq \mathcal X^{\operatorname{an}}\times \mathcal X^{\operatorname{an}}$ is an open neighborhood of the open subset $\mathcal U^{\operatorname{an}}$ of the diagonal copy of $\mathcal X^{\operatorname{an}}$ inside of  $\mathcal X^{\operatorname{an}}\times \mathcal X^{\operatorname{an}}$, and $\mathbf \Lambda\subseteq \mathbf \Omega\subseteq \mathcal X^{\operatorname{an}}\times \mathcal X^{\operatorname{an}}$ is, via the first projection, the family of analytic leaves parameterized by the points of $\mathcal U^{\operatorname{an}}$.

\subsubsection*{Step 2: Constructing the algebraic family of leaves  $\mathcal W$}
 Considering $\mathbb C$-points under the continuous map of topological spaces  $a:|\mathcal{X}^{\operatorname{an}}\times \mathcal{X}^{\operatorname{an}}|\rightarrow |\mathcal{X}\times \mathcal{X}|$, let $\mathcal W$ be the reduced induced closed substack of $\mathcal X$ supported on the Zariski closure of $a(\mathbf \Lambda)$; i.e., 
$$
\mathcal W=\overline{\mathbf \Lambda}^{\operatorname{Zar}}\subseteq \mathcal X\times \mathcal X.
$$

\subsubsection*{Step 3: The geometry of $\mathcal W$}
   The main claim, which we prove below in \Cref{L:dimVdimLL}, is that the assumptions on the $\alpha$-slope of the foliation made in \Cref{T:SonCP-T8} imply that 
\begin{equation}\label{E:Clm-dimW}
  \dim(\mathcal W)=\dim(\mathbf {\Lambda})=\dim (\mathcal X)+r. 
\end{equation}

    Using \eqref{E:Clm-dimW} we can give a good description of the geometry of $\mathcal W$.    
     First, from \eqref{E:Clm-dimW}, we can conclude that $\mathcal W$ is irreducible.  Indeed,  $\mathcal W^{\operatorname{an}}$ contains the connected smooth (and therefore irreducible) analytic substack $\mathbf \Lambda$, and there can be only one irreducible component of $\mathcal W^{\operatorname{an}}$ containing a given irreducible analytic substack of the same dimension as $\mathcal W^{\operatorname{an}}$.
   Let $$\pi_{\mathcal W}:\mathcal W\longrightarrow  \mathcal X$$ be the restriction of the first projection $\mathcal X\times \mathcal X \to \mathcal X$ to $\mathcal W$.  
Looking at the dimensions, a general fiber of $\pi_{\mathcal W}$ is of dimension $r$.  As the leaves of the foliation are also of dimension $r$, the general fibers of $\pi_{\mathcal W}$ are closed algebraic subsets of dimension $r$ containing  analytic leaves of the foliation as open analytic subsets. In other words, the Zariski closure of these leaves are irreducible components of these fibers.  In fact, we claim that the general fiber of $\pi_{\mathcal W}$ is irreducible, and therefore equal to the Zariski closure of the general leaf of $\mathcal F$.

To show that  the general fiber of $\pi_{\mathcal W}$ is irreducible, it suffices to consider the normalization $\nu:\hat{\mathcal W}\to \mathcal W$ (see \cite[\href{https://stacks.math.columbia.edu/tag/0GMH}{\S 0GMH}]{stacks-project} for the definition of the normalization), and then show that the general fiber of the composition $\pi_{\hat{\mathcal W}}:= \pi_{\mathcal W}\circ \nu:\hat{\mathcal W}\to \mathcal X$ has irreducible general fiber.   
The first step is to show that the general fiber of $\pi_{\hat {\mathcal W}}$ is connected.  For this we first observe that $\mathbf \Lambda$ being smooth (and therefore normal), the inclusion $\mathbf \Lambda \to \mathcal W^{\operatorname{an}}$ lifts to a morphism $\mathbf \Lambda \to \hat{\mathcal W}^{\operatorname{an}}$ to the normalization, by the universal property of the normalization   (e.g., \cite[Ch.~8, \S 3, Prop., p.164]{GrRe84}).  Therefore, $\pi_{\hat{\mathcal W}}$ admits a section, given by the diagonal morphism $\Delta^{\operatorname{an}}:\mathcal X^{\operatorname{an}}\hookrightarrow \mathbf \Lambda$.  
Now, using the Stein Factorization of $\pi_{\hat{\mathcal W}}$ (see  \cite[Thm.~4.6.14]{AlperStacksBook}), and the existence of a section of $\pi_{\hat{\mathcal W}}$, one can immediately conclude from the irreducibility of $\hat {\mathcal W}$ that the general fiber of $\pi_{\hat {\mathcal W}}$ is connected.   Now consider the associated morphism $\pi_{\hat W}:\hat W\to X$ on the coarse moduli spaces.  This is a surjective morphism of normal projective varieties with connected general fiber.  It then follows  (e.g., \cite[Thm.~1.20]{fujita_L}, \cite[Thm.~1.20']{fujita_L_corr}, \cite[Prop.~5.51]{GW20})  that the general fiber of $\pi_{\hat W}$ is  integral.  This implies that the general fiber of $\pi_{\hat {\mathcal W}}$ is integral, establishing the claim.  (Note that one could have started this paragraph with the coarse moduli spaces, and made the argument entirely at the level of analytic spaces and varieties.)

\subsubsection*{Step 4: Morphism to the Chow variety}

We now wish to view the morphism $\pi_{\mathcal W}:\mathcal W\to \mathcal X$ as a family of substacks of $\mathcal X$, parameterized by $\mathcal X$, so that we can construct a rational map from $\mathcal X$ to an appropriate moduli space of substacks, with fibers that are generically the leaves of $\mathcal F$.  However, rather than considering Chow varieties or Hilbert schemes of stacks,  we find it convenient to modify the picture slightly, so that we can work with Chow varieties, or Hilbert schemes, of projective varieties.  

To this end, let $q:V\to \mathcal X$ be a finite flat morphism from a smooth projective variety $V$, and consider the diagram
$$
\xymatrix{
W' \ar@{^(->}[r] \ar[d]& \mathcal X\times V\ar[d]^{1\times q}\\
\mathcal W \ar@{^(->}[r] \ar[rd]_{\pi_{\mathcal W}}& \mathcal X\times \mathcal X \ar[d]^{pr_1}\\
& \mathcal X
}
$$
where we define $W'$ to be the fibered product.   The fiber of $W'\to \mathcal X$ over a general point $x$ is the pre-image under $q$ of the algebraic closure of the leaf of $\mathcal F$ through $x$.  In particular, along the algebraic closure of the leaf through a general point $x$ of $\mathcal X$, the fibers of $W'\to \mathcal X$ are constant.  Moreover, for general points of $\mathcal X$ that are in different leaves, the fibers of $W'\to \mathcal X$ over those points are distinct.   
 While one can establish that there is a non-empty open substack of $\mathcal X$ over which $W'\to \mathcal X$ is flat, so that one can consider using  Hilbert schemes, we prefer to observe instead that there is a non-empty open substack of $\mathcal X$ over which the fibers of $W'\to \mathcal X$ lie in the same component of the Chow variety of $V$ (take an \'etale cover of $\mathcal X$ and argue as in the case of varieties). 
 We therefore have a rational map 
$$
\mathcal X\dashrightarrow \operatorname{Chow}(V)
$$
Note, that as $ \operatorname{Chow}(V)$ is a scheme, this morphism factors rationally through the coarse moduli space, which is a normal variety. As such, the morphism is well defined away from a codimension 2 locus. Let $Z$ be the closure of the image, and take a resolution of singularities of $Z$, if $Z$ is not smooth.  This provides the rational map $p:\mathcal X\dashrightarrow Z$ with generic fibers being the leaves of $\mathcal F$.  This implies that $\mathcal T_{\mathcal X/Z}$ agrees with $\mathcal F$ generically. 
\end{proof}

    Now we have to show the main claim \eqref{E:Clm-dimW}:
    
\begin{lem}\label{L:dimVdimLL}
$  \dim(\mathcal W)=\dim(\mathbf {\Lambda})=\dim (\mathcal X)+r$, where $r=\operatorname{rk}(\mathcal F)$. 
\end{lem}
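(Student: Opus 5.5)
The inequality $\dim(\mathcal W)\ge \dim(\mathbf \Lambda)=\dim(\mathcal X)+r$ is immediate, since $\mathbf \Lambda$ is a smooth analytic substack of dimension $\dim\mathcal X+r$ contained in $\mathcal W^{\operatorname{an}}$. The content is the reverse inequality, and I would prove it following Campana--P\u aun (in the Bost / Bogomolov--McQuillan style). The plan is to bound, for a fixed ample line bundle $\mathcal A$ on $\mathcal X\times\mathcal X$ (pulled back from the projective coarse space, replaced by a power so it is a line bundle), the dimension of
\[
H^0(\mathcal W,\mathcal A^{\otimes k}|_{\mathcal W}).
\]
By asymptotic Riemann--Roch on the coarse space, this grows like $c\,k^{\dim\mathcal W}$ with $c>0$. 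So it suffices to show that it is $O(k^{\dim\mathcal X+r})$.

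\textbf{Step 1: filtration along the diagonal.} For this I filter sections by their order of vanishing along the diagonal $\Delta\cong\mathcal X$, measured inside $\mathcal W$ in the leaf directions. The key local input is that near $\Delta\cap\mathbf\Lambda$ the space $\mathcal W$ contains $\mathbf \Lambda$, and $\mathbf\Lambda$ is locally a product of $\mathcal U$ with $r$-dimensional leaf germs. Restricting $s\in H^0(\mathcal W,\mathcal A^k)$ to $\mathbf\Lambda$ and taking the $j$-th Taylor coefficient in the leaf variables along $\Delta$ gives an injection of graded pieces
\[
\operatorname{gr}^j H^0(\mathcal W,\mathcal A^k)\hookrightarrow H^0\bigl(\mathcal X,\ \operatorname{Sym}^j\mathcal F^{\vee}\otimes \mathcal A^k|_\Delta\bigr).
\]
Here one has to take care over $\mathbf E$: use reflexive hulls, extending over codimension~$\ge2$ and the torsion-free $\operatorname{Sym}^j\mathcal F^\vee$. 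Injectivity of the total filtration holds because a section of $\mathcal A^k$ on the irreducible $\mathcal W$ that vanishes to infinite order along $\Delta$ on $\mathbf\Lambda$ vanishes on $\mathbf\Lambda$. It is therefore zero on its Zariski closure $\mathcal W$; this uses the analytic identity principle on the connected smooth $\mathbf\Lambda$.

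\textbf{Step 2: vanishing for large $j$.} The positivity hypothesis enters next. Every nonzero quotient of $\mathcal F$ has positive $\alpha$-slope, so $\mu_{\alpha,\min}(\mathcal F)>0$. By the results on $\alpha$-slope of tensor and symmetric powers from \cite{CMZslope_stability} (semistability of tensor products of semistable sheaves on DM stacks, in characteristic~$0$), one gets
\[
\mu_{\alpha,\max}(\operatorname{Sym}^j\mathcal F^\vee)\le -j\,\mu_{\alpha,\min}(\mathcal F)=-j\epsilon .
\]
Then $\mu_{\alpha,\max}(\operatorname{Sym}^j\mathcal F^\vee\otimes\mathcal A^k)\le -j\epsilon+k\,(c_1(\mathcal A)\cdot\alpha)$. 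By the vanishing result of \cite{CMZslope_stability}, which says that a torsion-free sheaf with negative maximal $\alpha$-slope for movable $\alpha$ has no nonzero sections, the graded pieces vanish for $j> Ck$, where $C=(c_1(\mathcal A|_\Delta)\cdot\alpha)/\epsilon$.

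\textbf{Step 3: counting.} For $j\le Ck$, I bound $h^0(\operatorname{Sym}^j\mathcal F^\vee\otimes\mathcal A^k)$ by $O(k^{n})\cdot\operatorname{rk}\operatorname{Sym}^j\mathcal F^\vee=O(k^n j^{r-1})$. This follows from a uniform estimate $h^0(\mathcal X,\mathcal G)\le \operatorname{rk}(\mathcal G)\cdot O(k^n)$ for sheaves whose slopes are bounded by $O(k)$; I would prove it via restriction to general complete intersection curves on the coarse space, or by embedding $\operatorname{Sym}^j\mathcal F^\vee$ into $\operatorname{Sym}^j$ of a fixed bundle. Summing over $j\le Ck$ gives $O(k^{n+r})$, hence $\dim\mathcal W\le n+r$.

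The main obstacle is Step 1 on a stack, namely making the Taylor-coefficient injection rigorous globally. I would do this on an \'etale atlas, using the local submersions $\rho_x$, and check that the graded maps descend; I would also need the precise form of the slope vanishing statement for movable classes quoted from \cite{CMZslope_stability}.
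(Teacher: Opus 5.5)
\textbf{Verdict.} Your proposal is correct. Steps 1 and 2 are the paper's argument: you restrict to infinitesimal neighborhoods of the diagonal in $\mathbf\Lambda$, identify $\mathcal N_{\mathcal X_0|\mathbf\Lambda}\cong\mathcal F^{\operatorname{an}}|_{\mathcal U^{\operatorname{an}}}$, and kill the graded pieces with $j\ge\delta_0 k$ by the slope computation of Lemma~\ref{bound}(a). You diverge from the paper only in the counting step.

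\textbf{The paper's counting.} The paper passes to a finite flat cover $q\colon V\to\mathcal X$ and uses Nakayama's construction. This gives a smooth projective $Y\to V$ of dimension $n+r-1$ and a line bundle $B$ with $p_*B^{\otimes m}=(\operatorname{Sym}^m q^*\mathcal F^\vee)^{\vee\vee}$ for every $m$. Dominating $B$ by a very ample $H$ bounds every surviving graded piece by $h^0\bigl(Y,(p^*q^*\mathcal M\otimes H^{\otimes\delta_0})^{\otimes k}\bigr)=O(k^{n+r-1})$. Summing over the $\delta_0 k$ values of $m$ gives $O(k^{n+r})$.

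\textbf{Your counting.} You bound each piece by $\operatorname{rk}(\operatorname{Sym}^j\mathcal F^\vee)\cdot O(k^n)=O(j^{r-1}k^n)$ and then sum. Your second suggested method makes this more elementary than the paper's route. Set $\mathcal O_V(1):=q^*\mathcal M$, which is ample on $V$. Dualizing $r$ generically independent sections of $(q^*\mathcal F^\vee)^\vee\otimes\mathcal O_V(m_0)$ embeds $q^*\mathcal F^\vee$ into $\mathcal O_V(m_0)^{\oplus r}$. Hence $(\operatorname{Sym}^j q^*\mathcal F^\vee)^{\vee\vee}\hookrightarrow\mathcal O_V(jm_0)^{\oplus\binom{j+r-1}{r-1}}$. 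This gives $h^0\le\binom{j+r-1}{r-1}\,h^0(V,\mathcal O_V(k+jm_0))=O(k^{n+r-1})$ for $j\le\delta_0 k$. In effect you replace Nakayama's $Y$ by $V\times\mathbb P^{r-1}$ and avoid the $p_*B^{\otimes m}$ machinery entirely.

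\textbf{Cautions.}
\begin{enumerate}
\item The fixed bundle must have rank exactly $r$. A bundle of rank $N>r$ only yields $O(k^{n+N})$, which is too weak.
\item The curve-restriction route needs care. General effective restriction theorems and Le Potier--Simpson-type bounds carry rank-dependent constants, and here the rank grows with $k$. You should restrict $\mathcal F^\vee$ first and then use semistability of symmetric powers on the curve, rather than appeal to a general estimate for sheaves with slopes $O(k)$.
\item Irreducibility of $\mathcal W$ is not available at this point, since the paper deduces it from this lemma. Step 1 only needs that $\mathcal W$ is the reduced Zariski closure of $\mathbf\Lambda$.
\item As in the paper, passing to $V$ is the convenient way to turn analytic sections over $\mathcal U^{\operatorname{an}}$ into algebraic sections, via Hartogs for reflexive sheaves and then GAGA.
\end{enumerate}
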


\begin{proof} We clearly have $ \dim  ( \mathcal X)+r=\dim(\mathbf {\Lambda})\le  \dim(\mathcal W) $, so we only have to show the inequality  $\dim (\mathcal W)\le \dim
( \mathcal X)+r$.  
  As $\mathcal W$ is a closed substack of $\mathcal X\times \mathcal X$, its coarse moduli space $W$ is a closed subscheme of the coarse moduli space $X\times X$ of $\mathcal X\times \mathcal X$, and it suffices to show that $\dim W\le \dim (\mathcal X)+r$.  
To this end, let 
 $\mathcal L$ be the pull-back to $\mathcal X\times \mathcal X$  of an ample line bundle $L$ on  $ {X}\times  {X}$, and note that  $\mathcal L|_{\mathcal W}$ is the pull-back of the ample line bundle $L|_W$ on $W$.   Setting $$n=\dim (\mathcal X),$$ it suffices to show that there is some positive constant $C$ such that for sufficiently large $k$, we have 
  \begin{equation}\label{1101}
 h^0(W,L|_W^{\otimes k})\le C\cdot k^{n+r}.
 \end{equation}
 Since pulling back sections of $L|_W$ to sections of $\mathcal L|_{\mathcal W}$ is injective, to establish \eqref{1101}, it suffices to show that there is a (possibly different) positive constant $C$ such that for all sufficiently large $k$ we have 
   \begin{equation}\label{1102}
 h^0(\mathcal W,\mathcal L|_{\mathcal W}^{\otimes k})\le C\cdot k^{n+r}.
 \end{equation}
 
 Let $\mathcal L^{\operatorname{an}}$ be the  analytic line bundle on $\mathcal X^{\operatorname{an}}\times \mathcal X^{\operatorname{an}}$ associated with $\mathcal L$.  
 For any $k\geq 0$, the sections of $\mathcal L|_{\mathcal W}^{\otimes k}$ on $\mathcal W$ restrict injectively to sections of $\mathcal L^{\operatorname{an}}|_{\mathbf \Lambda}^{\otimes k}$ over $\mathbf \Lambda$ ($\mathcal W$ is the Zariski closure of $\mathbf \Lambda$). 
   Next, one considers the further restriction of these sections to formal neighborhoods of the diagonal, along  $\mathcal U\subseteq \mathcal X$; we use the notation  $\mathcal{X}_0:= \Delta^{\operatorname{an}}(\mathcal{U}^{\operatorname{an}})\subseteq \mathbf \Lambda$, where $\Delta^{\operatorname{an}}$ is the diagonal map.  
For any $m>0$, let $\mathcal{X}_m$ be the $m^{\rm th}$ infinitesimal neighborhood of $\mathcal{X}_0$ in ${\mathbf \Lambda}$, defined by the structure sheaf:
$\OO_{\mathcal{X}_m}:=\OO_{\mathbf \Lambda}/\mathcal I_{0}^{m+1},$
where $\mathcal I_{0}$ is the sheaf of ideals of the diagonal $\mathcal{X}_0\subseteq \Lambda$.  Considering the inclusions $\mathcal X_m\hookrightarrow \mathbf \Lambda\hookrightarrow \mathcal W^{\operatorname{an}}$, we see that to establish \eqref{1102}, 
it is enough to show that  there exists a (possibly different) positive constant $C$ such that for all $m$ and all sufficiently large $k$ one has 
 \begin{equation}\label{1104}
h^0(\mathcal{X}_m, \mathcal L|_{\mathcal X_m}^{\otimes k}\otimes \OO_{\mathcal{X}_m})\leq C\cdot k^{n+r}.
\end{equation}

 We now want to relate this to global sections of the foliation $\mathcal F$.  To do this, we observe that the restriction of the foliation $\mathcal F$ to $\mathcal U$  is isomorphic to the normal bundle to the diagonal in $\Lambda$; i.e., 
   \begin{equation}\label{1103}
 \mathcal F^{\operatorname{an}}|_{\mathcal U^{\operatorname{an}}}\cong \mathcal N_{\mathcal X_0 |\mathbf \Lambda};
 \end{equation}
 the identification above follows immediately from the standard identification for foliations on varieties, after taking an \'etale cover.   In a little more detail, identifying $\mathcal X_0=\mathcal U^{\operatorname{an}}$ for brevity, one has the standard identification $\mathcal N_{\mathcal X_0|\mathcal X_0\times \mathcal X_0}\cong \mathcal T_{\mathcal X_0}$, of the normal bundle to the diagonal with the tangent bundle, as well as a natural inclusion of normal bundles $\mathcal N_{\mathcal X_0|\mathbf \Lambda}\hookrightarrow \mathcal N_{\mathcal X_0|\mathcal X_0\times\mathcal X_0}$. Together this gives an inclusion $\mathcal N|_{\mathcal X_0|\mathbf \Lambda} \hookrightarrow \mathcal T_{\mathcal X_0}$; after an \'etale cover, one is reduced to the case of varieties, where one can then confirm that the inclusion  $\mathcal N|_{\mathcal X_0|\mathbf \Lambda} \hookrightarrow \mathcal T_{\mathcal X_0}$ identifies $\mathcal N|_{\mathcal X_0|\mathbf \Lambda}$ with $\mathcal F^{\operatorname{an}}|_{\mathcal X_0}\subseteq \mathcal T_{\mathcal X_0}$.

To relate this to our previous estimate, \eqref{1104}, we use the standard short exact sequence of sheaves on $\mathcal X_0$, 
$$
0\to \operatorname{Sym}^m(\mathcal N^\vee _{\mathcal X_0|\mathbf \Lambda})\to \mathcal O_{\mathcal X_{m+1}}\to \mathcal O_{\mathcal X_m}\to 0.
$$
Finally, this together with \eqref{1103} shows that to establish \eqref{1104}, and therefore to complete the proof of \Cref{L:dimVdimLL}, it is sufficient to establish that there exists a (possibly different) positive constant $C$ such that, for any sufficiently large $k$:
\begin{equation}\label{1106}
\sum_{m\geq 0} h^0(\mathcal{X}_0, \mathcal L|_{\mathcal X_0}^{\otimes k}\otimes \operatorname{Sym}^m((\mathcal{F}^{\operatorname{an}}|_{\mathcal U})^\vee))\leq C\cdot k^{n+r}.
\end{equation}

 The estimate \eqref{1106} will be a consequence
of following statement:

\begin{lem}\label{bound}  Let $\mathcal{F}$ be a coherent sheaf on $\mathcal X$, which is locally free when restricted to an open set $\mathcal{U}\subseteq \mathcal{X}$ such that $\operatorname{codim}_{\mathcal{X}}(\mathcal{X}- \mathcal{U})\geq 2$, let 
$\alpha\in \operatorname{N}_1(\mathcal X)_{\mathbb R}$ 
 be a movable class,
and let $\mathcal M$ be the pull back to $\mathcal X$ of an ample line bundle on the coarse moduli space $X$.  
Let $\delta_0$ be a positive integer such that  
\begin{equation}\label{E:L:bound}
\displaystyle \delta_0>\frac{c_1(\mathcal M)\cdot \alpha}{\mu_{\alpha}^{\min}(\mathcal{F})}.
\end{equation}
 Let $q:V\rightarrow \mathcal{X}$ be a finite flat morphism from a smooth projective variety.  Under the assumption that $$\mu_{\alpha}^{\min}(\mathcal F)> 0,$$ the following assertions are true:

\begin{enumerate}[label=(\alph*)]

\item \label{bound:a}We have $H^0\left(\mathcal{U},(\mathcal M^{\otimes k}\otimes \operatorname{Sym}^m(\mathcal{F}^\vee))|_{\mathcal U}\right)=0$ if  $m\geq \delta_0 k$.

\item  \label{bound:b} There exists a smooth projective variety $Y$ of dimension
$\dim(Y)= \dim(\mathcal X)+ \operatorname{rk}(\mathcal{F})-1$, together with a map $p: Y\to V$ and a line bundle $B$ on $Y$ such that
\begin{equation}\label{anym}
p_*(B^{\otimes m})= ({\operatorname{Sym}^m}(q^*\mathcal{F}^\vee))^{\vee \vee}
\end{equation}  
for any $m\geq 1$.

\item  \label{bound:c} For any pair of positive integers $k$ and $m$,  we have the
  inequality
\begin{equation}\label{ineq}
  h^0\left(\mathcal{U}, (\mathcal M^{\otimes k}\otimes \operatorname{Sym}^m(\mathcal{F}^\vee))|_{\mathcal U}\right)\leq 
  h^0\left(Y, p^*q^*\mathcal M^{\otimes k}\otimes B^{\otimes m}\right).
\end{equation}    

\end{enumerate}

\end{lem}

 Before proving \Cref{bound}, we explain how it 
implies  the 
inequality \eqref{1106}, and therefore completes the proof of \Cref{L:dimVdimLL}. 
First note that we may take $\mathcal L=\mathcal M'\boxtimes \mathcal M'$ for some line bundle $\mathcal M'$ on $\mathcal X$ that is the pull-back of an ample line bundle from $X$.  Then, the left hand side of \eqref{1106} can be written as   $\sum_{m\geq 0} h^0(\mathcal{X}_0, \mathcal L|_{\mathcal X_0}^{\otimes k}\otimes \operatorname{Sym}^m((\mathcal{F}^{\operatorname{an}}|_{\mathcal U})^\vee))= \sum_{m\geq 0} h^0(\mathcal U, \mathcal M'|_{\mathcal U}^{\otimes 2k}\otimes \operatorname{Sym}^m((\mathcal{F}^{\operatorname{an}}|_{\mathcal U})^\vee))=  \sum_{m\geq 0} h^0\left(\mathcal{U},((\mathcal M'^{\otimes 2})^{\otimes k}\otimes \operatorname{Sym}^m(\mathcal{F}^\vee))|_{\mathcal U}\right)$.  Setting $\mathcal M=\mathcal M'^{\otimes 2}$, we see that to show \eqref{1106}, it suffices to show that 
there exists a (possibly different) positive constant $C$ such that, for any sufficiently large $k$:
\begin{equation}\label{ex1-0}
  \sum_{m\geq 0} h^0\left(\mathcal{U},(\mathcal M^{\otimes k}\otimes \operatorname{Sym}^m(\mathcal{F}^\vee))|_{\mathcal U}\right) \le C \cdot k^{n+r}.
  \end{equation}
For this we observe that from \Cref{bound}\ref{bound:a} we have that 
\begin{equation}\label{ex1}
  \sum_{m\geq 0} h^0\left(\mathcal{U},(\mathcal M^{\otimes k}\otimes \operatorname{Sym}^m(\mathcal{F}^\vee))|_{\mathcal U}\right) =
  \sum_{0\le m\leq \delta_0k}h^0\left(\mathcal{U},(\mathcal M^{\otimes k}\otimes \operatorname{Sym}^m(\mathcal{F}^\vee))|_{\mathcal U}\right);
\end{equation}  
it turns out that this (in particular \Cref{bound}\ref{bound:a}) is the only place in the agument where we use the assumption that  $\mu_{\alpha}^{\min}(\mathcal{F})>0$.   Consequently, using also \Cref{bound}\ref{bound:c}, we see that to establish \eqref{ex1}, 
 it suffices to show that 
there exists a (possibly different) positive constant $C$ such that, for any sufficiently large $k$:
\begin{equation}\label{1107}
 \sum_{0\le m\leq \delta_0k}  h^0\left(Y, p^*q^*\mathcal M^{\otimes k}\otimes B^{\otimes m}\right) \le C \cdot k^{n+r}.
\end{equation}
Now letting $H$ be a very ample line bundle on $Y$ such that $H\otimes B^{-1}$ is effective, we have an inclusion of line bundles, $B\to H$, so that applying global sections we have that 
$$
h^0(Y,p^*q^*\mathcal M^{\otimes k}\otimes B^{\otimes m})\le h^0\big(Y, p^*q^*\mathcal M^{\otimes k}\otimes H^{\otimes m}\big).
$$
To show that  the left  hand side of  
\eqref{1107} is $O(k^{n+r})$,
it therefore suffices to show that 
\begin{equation}\label{1ex1}
\sum_{0\le m\leq \delta_0k} h^0\left(Y, p^*q^*\mathcal M^{\otimes k}\otimes H^{\otimes m}\right)
\end{equation}  
is $O(k^{n+r})$.  But the above is obviously smaller than $\sum_{0\le m\leq \delta_0k} h^0\left(Y, p^*q^*\mathcal M^{\otimes k}\otimes H^{\otimes \delta_0k}\right)=  \delta_0k\cdot h^0\left(Y, p^*q^*\mathcal M^{\otimes k}\otimes H^{\otimes \delta_0k}\right)$.  By the asymptotic Riemann--Roch theorem, we have the estimate that 
$h^0\left(Y, p^*q^*\mathcal M^{\otimes k}\otimes (H^{\otimes \delta_0})^{\otimes k}\right) $ grows like  $O(k^{\dim Y})= O(k^{n+r-1})$ as $k\to \infty$ (take $H$ sufficiently ample that $p^*q^*\mathcal M\otimes H^{\delta_0}$  is ample).  
This completes the explanation of how \Cref{bound}
implies  the 
inequality \eqref{1106}, and therefore completes the proof of \Cref{L:dimVdimLL}.
We now proceed with the proof of \Cref{bound}:

\begin{proof}[Proof of \Cref{bound}]
Part \ref{bound:a} follows from  
\cite[Lem.~3.4(b)]{CMZslope_stability}
and the following slope inequality, which holds if $m\geq \delta_0k$: 
\begin{align*}
\mu_{\alpha}^{\max} \big(\mathcal M^{\otimes k}\otimes (\operatorname{Sym}^m(\mathcal{F}^\vee))^{\vee\vee}\big)&=k\cdot \mu_{\alpha}^{\max} \big(\mathcal M)+\mu_\alpha^{\max} (\operatorname{Sym}^m(\mathcal{F}^\vee))^{\vee\vee}\big) & 
\text{(\cite[Thm.~3.23(a)]{CMZslope_stability})} \\
&=k \cdot c_1(\mathcal M)\cdot\alpha+m\cdot \mu_{\alpha}^{\max}(\mathcal{F}^\vee) & 
\text{(\cite[Thm.~3.27(a)]{CMZslope_stability})} \\
&=k \cdot c_1(\mathcal M)\cdot\alpha-m\cdot \mu_{\alpha}^{\min}(\mathcal{F})& 
\text{(\cite[Lem.~3.10(1)]{CMZslope_stability})} \\
&<0 & \text{(From \eqref{E:L:bound})}
\end{align*}

Part \ref{bound:b} is described by Nakayama
\cite[Ch.~V, \S 3.22 and Ch.~III Prop.~5.10]{nakayama_2004}.  There he shows a more general result that implies this. We will simply recall the construction of
$(Y, B)$ for the convenience of the reader. Let $V_0:=q^{-1}(\mathcal{U})$.
Let $\pi:\mathbb{P}(q^*\mathcal{F}^\vee)\to V$ be the
scheme over $V$ associated to the symmetric algebra on the torsion-free coherent sheaf $q^*\mathcal{F}^\vee$,
and let $\OO_{q^*\mathcal{F}^\vee}(1)$ be the tautological line bundle on $\mathbb{P}(q^*\mathcal{F}^\vee)$.
Let $\mathbb{P}^\prime(q^*\mathcal{F}^\vee)$ be the normalization of the component of
$\mathbb{P}(q^*\mathcal{F}^\vee)$ that contains the Zariski open subset $\pi^{-1}(V_0)$ (we recall the crucial fact that the co-dimension of $V_0$ in $V$ is at least  two).
Finally, let $Y$ be a smooth projective variety such that there exists a birational morphism $Y\to \mathbb{P}^\prime(q^*\mathcal{F}^\vee)$, which is an isomorphism over $\pi^{-1}(V_0)$,  and such that the complement is snc.
We denote by $\mu: Y\to \mathbb{P}(q^*\mathcal{F}^\vee)$ the resulting map, and let
\begin{equation}\label{ex2}
p:Y\to V
\end{equation}  
be the composition $\pi\circ \mu$. Nakayama shows that we can take
\begin{equation}\label{ex3}
  B:= \mu^*\left(\OO_{q^*\mathcal{F}^\vee}(1)\right)+ D_1,
\end{equation}
where $D_1$ is an
effective $p$-exceptional divisor. The important fact here is that $B$ can be chosen so that \eqref{anym} holds \emph{for any} $m$.

Part \ref{bound:c} follows from part \ref{bound:b} and the projection formula, using that reflexive sheaves are $S_2$. 
We have
\begin{align*}
  h^0\left(\mathcal{U}, (\mathcal M^{\otimes k}\otimes \operatorname{Sym}^m(\mathcal{F}^\vee))|_{\mathcal U}\right) & \le  h^0\left(V_0, q^*((\mathcal M^{\otimes k}\otimes \operatorname{Sym}^m(\mathcal{F}^\vee))|_{\mathcal U})\right)\\
  &= h^0(V,q^*\mathcal M^{\otimes k}\otimes (\operatorname{Sym}^m(q^*\mathcal F^\vee))^{\vee \vee})\\
  &= h^0(V,q^*\mathcal M^{\otimes k}\otimes p_*B^{\otimes m})\\
  &= h^0(Y,p^*q^*\mathcal M^{\otimes k} \otimes B^{\otimes m}).
  \end{align*}
This completes the proof of \Cref{bound}.
\end{proof}

Having completed the proof of \Cref{bound}, we have completed the proof of \Cref{L:dimVdimLL}.  
\end{proof}

\section{Weak positivity for log relative canonical bundles}\label{S:WPLRCB}
 The proof of \cite[Thm.~7.6]{CPFol19} given in \cite[Thm.~1]{Schnell17Epi},  depends on \cite[Thm.~7.1 and 7.3]{CKT21} and  \cite[Thm.~3.4]{CPFol19}. 
  In order to prove \Cref{TA:SonCP-T4}, generalizing \cite[Thm.~7.6]{CPFol19}, 
 in this subsection, we generalize \cite[Thm.~7.1 and 7.3]{CKT21} and  \cite[Thm.~3.4]{CPFol19}  to DM stacks.

The set-up will always be the following.  We will take $\mathcal X$ to be a smooth proper integral  DM stack of finite type over $\mathbb C$ with projective coarse moduli space, $p:\mathcal X\to Z$ will be an equidimensional morphism with connected fibers to a smooth projective variety $Z$, and we will assume that the general fiber is smooth. 
We denote by $K_{\mathcal X/Z}$ the relative dualizing sheaf,  
which since both $\mathcal X$ and $Z$ are smooth, is a line bundle, linearly equivalent to $K_{\mathcal X}-p^*K_Z$.
By the definition of a coarse moduli space, there is a unique morphism $f:X\to Z$ making the following diagram commute:
\begin{equation}\label{E:CKT-cm}
\xymatrix{
\mathcal X  \ar[d]_\pi \ar[r]^p & Z \\
X \ar[ru]_f
}
\end{equation}

\begin{rem}[Log canonical singularities]\label{R:lc-pair}
Let $\mathcal X$ be a smooth separated integral DM stack of finite type over $\mathbb C$ with quasi-projective coarse moduli space $\pi:\mathcal X\to X$,  let  $\mathbf \Delta\subseteq \mathcal X$ be a reduced divisor with at worst normal crossing singularities, and let $\mathbf R$ be the ramification divisor for $\pi$. If $\Delta$ (resp.~$R$) is the $\mathbb Q$-divisor on $X$ such that $\pi^*\Delta=\mathbf \Delta$ (resp.~$\pi^*R=\mathbf R$), then the pair $(X,R+\Delta)$ is log canonical. 
Indeed, if $D\subseteq |\Delta|\cap |R|$ is an irreducible divisor with non-trivial automorphism group of order $r$ (the length of the generic stabilizer of the divisor minus the length of the generic stabilizer of the stack) at the generic point of $\pi^*D$, then the coefficient of $D$ in $\Delta$ is $\frac{1}{r}$ and in $R$ is $(1-\frac{1}{r})$.  
Therefore, the coefficient of $D$ in $R+\Delta$ is $1$.  
Therefore,  $(X,R+\Delta)$
 will be log canonical, since the pair $(X,R+\Delta)$ pulls back via $\pi$ to the snc pair $(\mathcal X,\mathbf \Delta)$; see e.g., \cite[Cor.~2.43(2)]{kollar_singularities_MMP}.
\end{rem}

We start with the following generalization of a special case of \cite[Thm.~7.3]{CKT21}, which in turn extends results of 
 \cite{lu2002refinedkodairadimensioncanonical, Campana2004Orbifolds, BP08bergman, horing10positivity, Fujino17, PT18positivity}: 

\begin{teo}
\label{T:CKT21-7.3}
Let $\mathcal X$ be a smooth proper integral DM stack of finite type over $\mathbb C$ with projective coarse moduli space, and let $p:\mathcal X\to Z$ be a surjective morphism with connected fibers to a smooth projective variety $Z$, with smooth general fiber. 
Let $\mathbf \Delta\subseteq \mathcal X$ be a reduced divisor with at worst normal crossing singularities. 
If the restriction of $K_{\mathcal X/Z} + \mathbf \Delta$ to the general fiber of $p$ is pseudo-effective, then $K_{\mathcal X/Z} +\mathbf \Delta$ is
pseudo-effective.
\end{teo}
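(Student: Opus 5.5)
The plan is to reduce to the coarse moduli space and apply \cite[Thm.~7.3]{CKT21}, which is stated for log canonical pairs on normal projective varieties. Let $\pi:\mathcal X\to X$ be the coarse moduli map, $f:X\to Z$ the induced morphism as in \eqref{E:CKT-cm}, and $\mathbf R$ the ramification divisor of $\pi$. Let $R$ and $\Delta$ be the $\mathbb Q$-divisors on $X$ with $\pi^*R=\mathbf R$ and $\pi^*\Delta=\mathbf\Delta$. Then $K_{\mathcal X}=\pi^*(K_X+R)$, so
$$K_{\mathcal X/Z}+\mathbf \Delta=\pi^*\big(K_{X/Z}+R+\Delta\big)$$
as $\mathbb Q$-Cartier divisors. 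By \Cref{R:lc-pair}, the pair $(X,R+\Delta)$ is log canonical. Moreover, $f$ is a surjective morphism of normal projective varieties with connected fibers, since the fibers of $f$ are images of the fibers of $p$.

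The first step is to transfer the hypothesis on the general fiber. Let $\mathcal X_z$ be a general fiber of $p$; it is smooth, and by generic flatness and the fact that coarse moduli formation commutes with flat base change, its coarse space is the fiber $X_z$ of $f$. The restriction $(K_{\mathcal X/Z}+\mathbf\Delta)|_{\mathcal X_z}=K_{\mathcal X_z}+\mathbf\Delta|_{\mathcal X_z}$ is the pullback of $(K_{X/Z}+R+\Delta)|_{X_z}=K_{X_z}+(R+\Delta)|_{X_z}$. By \cite{CMZpositivity}, a $\mathbb Q$-line bundle on the stack is pseudo-effective if and only if its descent to the coarse space is. This uses the facts that $\pi_*$ and $\pi^*$ identify $N^1$ after tensoring with $\mathbb Q$ and preserve effectivity up to multiples. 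Hence $K_{X_z}+(R+\Delta)|_{X_z}$ is pseudo-effective on the general fiber $X_z$.

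The second step is to apply \cite[Thm.~7.3]{CKT21} to $f:(X,R+\Delta)\to Z$. This gives that $K_{X/Z}+R+\Delta$ is pseudo-effective. Pulling back by $\pi$, and using again the comparison of pseudo-effectivity in \cite{CMZpositivity}, we conclude that $K_{\mathcal X/Z}+\mathbf\Delta$ is pseudo-effective.

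The main obstacle is matching the hypotheses of \cite[Thm.~7.3]{CKT21} exactly. That theorem may require the pair to be log canonical over the generic point of $Z$, or the boundary to have coefficients in $[0,1]$ with $K_X+R+\Delta$ being $\mathbb Q$-Cartier. It may also be phrased in terms of $K_{X/Z}$ defined via $f$ between a normal variety and a smooth base. The coefficients of $R+\Delta$ lie in $[0,1]$ by the computation in \Cref{R:lc-pair}. $\mathbb Q$-Cartierness holds because $X$ has quotient singularities and is therefore $\mathbb Q$-factorial. If that result additionally needs an snc model, I would pass to a log resolution $\mu:X'\to X$, write $K_{X'}+B'=\mu^*(K_X+R+\Delta)+E$ with $E\ge 0$ exceptional and $B'$ having coefficients at most $1$, and apply the theorem there. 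Pseudo-effectivity of $\mu^*(\cdot)+E$ implies that of its pushforward $K_{X/Z}+R+\Delta$. On general fibers, the same relation shows that the fiber hypothesis passes to $X'$, since pullback preserves pseudo-effectivity and adding an effective divisor does too.
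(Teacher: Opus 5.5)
Your proposal is correct, and its main line is the paper's. You write $K_{\mathcal X/Z}+\mathbf\Delta=\pi^*(K_{X/Z}+R+\Delta)$, transfer the general-fiber hypothesis to $K_{X_z}+(R+\Delta)|_{X_z}$ using the definition of pseudo-effectivity on stacks, note that $(X,R+\Delta)$ is log canonical by \Cref{R:lc-pair}, and invoke \cite[Thm.~7.3]{CKT21}.

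You differ from the paper in how you bridge the hypotheses of that theorem. Contrary to your opening sentence, the paper notes that it is stated for smooth projective varieties with an snc boundary, not for lc pairs, so your fallback is actually needed. The paper handles the mismatch by observing that the proof of \cite[Thm.~7.3]{CKT21} relies on \cite[Thm.~1.1]{Fujino17}, which holds for log canonical pairs. You instead pass to a log resolution with $K_{X'}+B'=\mu^*(K_X+R+\Delta)+E$, apply the theorem as stated to $(X',B')\to Z$, and descend via $\mu_*(\mu^*L+E)=L$. Your route uses \cite{CKT21} as a black box rather than reopening its proof. The price is that the snc statement must cover boundaries with coefficients in $[0,1]$, not only reduced divisors. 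This genuinely matters: $B'$ contains the strict transforms of components of $R$ with coefficient $1-\frac1r$. These are not $\mu$-exceptional, so rounding $B'$ up to a reduced divisor would not be harmless.

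One small correction: restricting to a fiber is not a flat base change, so generic flatness is not why the coarse space of $\mathcal X_z$ is $X_z$. The right reason is that DM stacks in characteristic $0$ are tame, and for tame stacks coarse space formation commutes with arbitrary base change.
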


We are using the definition of pseudo-effective line bundles on DM stacks from \cite[Def.~2.4]{CMZpositivity}; i.e., $\mathcal L$ is pseudo-effective on $\mathcal X$ if some positive multiple descends to a pseudo-effective  line bundle on the coarse moduli space $X$.

\begin{proof} Using the notation in the diagram \eqref{E:CKT-cm},  let $\Delta$ (resp.~$R$) be the $\mathbb Q$-divisor on $X$ such that $\pi^*\Delta=\mathbf \Delta$ (resp.~$\pi^*R=\mathbf R$), where $\mathbf R$ is the ramification divisor for $\pi$.    
Starting with the fact that  $K_{\mathcal X}=\pi^*K_X+\mathbf R$, we have that $K_{\mathcal X/Z}=K_{\mathcal X}-p^*K_Z=(\pi^*K_X-\pi^*f^*K_Z)+\mathbf R$, so that $K_{\mathcal X/Z}=\pi^*(K_{X/Z}+R)$; here we are using that $K_{X/Z}=K_X-f^*K_Z$, e.g., \cite[Rem.~26(vii)]{kleiman80rel-duality}.
Consequently, $K_{\mathcal X/Z}+\mathbf \Delta= \pi^*(K_{X/Z}+R+\Delta)$, 
so it suffices to show that $K_{X/Z}+R+\Delta$ is pseudo-effective on $X$. 

 If $\mathcal F$ is a general fiber of $p$, then we have
\begin{equation}\label{E:CKT21-7.3E1}
(K_{\mathcal X/Z}+\mathbf \Delta)|_{\mathcal F}=K_{\mathcal F}+\mathbf \Delta|_{\mathcal F},
\end{equation}
which we are assuming  is pseudo-effective.  
Denote by $F$ the fiber of $f$ corresponding to the fiber $\mathcal F$ of $p$, and note that $\pi|_{\mathcal F}:\mathcal F\to F$ exhibits $F$ as the coarse moduli space of $\mathcal F$.  
Since we are considering a general fiber, we have that $(\pi|_{\mathcal F})^*(K_F+R|_F)= K_{\mathcal F}$, 
 so that 
\begin{equation}\label{E:CKT21-7.3E2}
 (\pi|_{\mathcal F})^*(K_F+R|_F+\Delta|_F)=K_{\mathcal F}+\mathbf \Delta|_{\mathcal F}.
\end{equation}  
Since we are assuming that $(K_{\mathcal X/Z}+\mathbf \Delta)|_{\mathcal F}$ is pseudo-effective,  it follows from \eqref{E:CKT21-7.3E1} and \eqref{E:CKT21-7.3E2} and the definition of a line bundle being pseudo-effective on $\mathcal F$ that $K_F+R|_F+\Delta|_F$ is pseudo-effective on $F$.

It then follows from \cite[Thm.~7.3]{CKT21} that $K_{X/Z}+R+\Delta$ is pseudo-effective.   Note that the statement of \cite[Thm.~7.3]{CKT21} is for pairs consisting of a smooth projective variety and an snc boundary, but the proof of \cite[Thm.~7.3]{CKT21}, which relies on \cite[Thm.~1.1]{Fujino17}, holds under the more general hypotheses of \cite[Thm.~1.1]{Fujino17}, i.e., that the pair be log canonical.  Since $(\mathcal X,\mathbf \Delta)$ is an snc pair, we have that   $(X,R+\Delta)$ is log canonical (\Cref{R:lc-pair}), and so we may indeed employ \cite[Thm.~7.3]{CKT21}  to conclude that  $K_{X/Z}+R+\Delta$ is pseudo-effective.
\end{proof}

We now review  some of the terminology from \cite{CPFol19} regarding ramification and branch divisors for morphisms in the situation of \eqref{E:CKT-cm}.  We have an exact sequence of sheaves 
\begin{equation}\label{E:TX/Z}
0\to {\mathcal T}_{\mathcal X/Z}\to {\mathcal T}_{\mathcal X}\to p^*{\mathcal T}_Z \to \mathcal R\to 0
\end{equation}
where $\mathcal T_{\mathcal X/Z}$ was defined in \Cref{D:AlgFol} to be the kernel of the morphism $\mathcal T_{\mathcal X}\to p^*\mathcal T_{\mathcal Z}$, and we are defining $\mathcal R$ to be the co-kernel.  By assumption, $\mathcal R$ is torsion and is  the quotient of the  torsion-free sheaf (vector bundle) $p^*\mathcal T_Z$, and so the determinant $\det \mathcal R$ is a line bundle admitting a nonzero section with zero locus contained in the support of $\mathcal R$ (e.g., \cite[Lem.~1.1]{CMZpositivity}). We define 
\begin{equation}\label{E:D:R(p)}
\mathbf R(p)
\end{equation}
 to be the zero divisor of this section, and call it  the ramification divisor of $p$ (see also  \cite[Def.~2.16 and Lem.~2.31]{CKT21} and note that in \cite[Def.~3.1]{CPFol19}, they denote $\mathbf R(p)$ by $D(p)$ if $p$ is not assumed to be equidimensional).

Taking determinants in \eqref{E:TX/Z}, we have
\begin{equation}\label{E:c1TX/Z}
c_1 ({\mathcal T}_{\mathcal X/Z})= -K_{\mathcal X}+p^*K_Z+\mathbf R(p)= -K_{\mathcal X/Z}+\mathbf R(p).
\end{equation}
We define 
\begin{equation}\label{E:D:DeltaHor}
\mathbf \Delta^{hor}\subseteq \mathbf \Delta
\end{equation}
 to be the union of the components of $\mathbf \Delta$ that map on to $Z$.

To state the next result, we recall the definition of an essentially equidimensional map: 
\begin{dfn}[Essentially equidimensional]\label{D:EssEqDim}
 For  $p:\mathcal{X}\dashrightarrow Z$  a rational map between a smooth separated integral DM stack $\mathcal X$ of finite type over $\mathbb C$ and a normal variety $Z$, we say
that $p$ is \emph{essentially equidimensional} if there exists an open substack $\mathcal{U}\subseteq \mathcal{X}$ with complement of codimension at least $2$ such that $p|_{\mathcal{U}}$ is an equidimensional morphism.
\end{dfn}

\begin{rem}\label{R:TXZR(p)DeltaHor}
 Note that for an equidimensional rational map  $p:\mathcal X\dashrightarrow Z$, by restricting to the substack $\mathcal U$ in the definition, and then extending over codimension-$2$ loci, we can define $\mathcal T_{\mathcal X/Z}$, $\mathbf R(p)$, and $\mathbf \Delta^{hor}$, using the definitions above for morphisms. 
\end{rem} 

We use \Cref{T:CKT21-7.3} to give the following generalization of a special case of \cite[Thm.~7.1]{CKT21} (see also \cite{CP15}):

\begin{teo}

\label{T:CKT21-7.1}
Let $\mathcal X$ be a smooth proper integral DM stack of finite type over $\mathbb C$ with projective coarse moduli space,
and let $p:\mathcal X\dashrightarrow Z$ be an essentially equidimensional 
rational map with connected fibers (on the locus of definition) to a normal projective variety $Z$. 
If $K_{\mathcal X/Z}+ \mathbf \Delta^{hor}$ 
 is pseudo-effective, then so is $K_{\mathcal X/Z} + \mathbf \Delta^{hor}
-\mathbf R(p)$.

\end{teo}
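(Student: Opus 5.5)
The natural route is to reduce to the coarse moduli space and apply \cite[Thm.~7.1]{CKT21}, as in the proof of \Cref{T:CKT21-7.3}. If that fails, one can follow the proof in \cite{CP15,CKT21} on the stack directly.

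First, restrict to the open substack $\mathcal U\subseteq\mathcal X$ with complement of codimension $\ge 2$ on which $p$ is an equidimensional morphism. Since line bundles and divisors on the smooth stack $\mathcal X$ extend uniquely over codimension-$2$ loci (\Cref{R:TXZR(p)DeltaHor}), the divisors $K_{\mathcal X/Z}$, $\mathbf\Delta^{hor}$ and $\mathbf R(p)$ are well defined. Let $U=\pi(\mathcal U)$ be the corresponding open subset of the coarse space $X$; its complement also has codimension $\ge 2$ because $\pi$ is finite. Then $f|_U$ is equidimensional with connected fibers. Write $\pi^*R=\mathbf R$ and $\pi^*\Delta^{hor}=\mathbf\Delta^{hor}$, as in \Cref{R:lc-pair}. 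The same computation as in the proof of \Cref{T:CKT21-7.3} gives
\[
K_{\mathcal X/Z}+\mathbf\Delta^{hor}=\pi^*(K_{X/Z}+R+\Delta^{hor}).
\]
The pair $(X,R+\Delta^{hor})$ is log canonical by \Cref{R:lc-pair}, and its horizontal part is $R^{hor}+\Delta^{hor}$. Pseudo-effectivity on $\mathcal X$ is by definition pseudo-effectivity of the descended $\mathbb Q$-divisor on $X$.

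The key step is to compare $\mathbf R(p)$ with the ramification divisor $R(f)$ of $f$ on $X$, together with the vertical part $R^{vert}$ of $R$. Over a prime divisor $P\subseteq Z$, let $D\subseteq X$ be a component of $f^*P$, with multiplicity $m_D$ in $f^*P$. Let $e$ be the ramification index of $\pi$ along $D$. Then $\pi^*D$ appears with multiplicity $m_D/e$ in $p^*P$, computed on $\mathcal X$, and the coefficient of $\pi^*D$ in $\mathbf R(p)$ is $m_D/e-1$, since $p^*P-(p^*P)_{\mathrm{red}}$ equals $\mathbf R(p)$ in codimension one, the map being equidimensional. Pulled back, $\pi^*D$ has coefficient $m_D-1$ in $R(f)$ and coefficient $e-1$ in $\mathbf R$. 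The target identity is then
\[
\mathbf R(p)=\pi^*\big(R(f)^{\sharp}\big),
\]
where $R(f)^\sharp$ is the orbifold ramification divisor of $f$ with respect to the boundary $R+\Delta^{hor}$ in the sense of \cite[Def.~2.16]{CKT21}. That definition uses multiplicities weighted by the orbifold coefficients $1-1/e$ of the boundary, which is exactly what matches $m_D/e-1$. Concretely, I would check this divisor-by-divisor at generic points of vertical divisors, passing to an étale chart where $\mathcal X$ is locally a quotient $[V/\mu_e]$.

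Granting that identification, \cite[Thm.~7.1]{CKT21} applies to the lc pair $(X,R+\Delta^{hor})$ and the equidimensional map $f$. Its hypothesis is that $K_{X/Z}+(R+\Delta^{hor})^{hor}$ be pseudo-effective. This follows from our hypothesis once we note that vertical components of $R$ enter $K_{X/Z}+R$ and cannot be discarded freely. The conclusion, pseudo-effectivity of $K_{X/Z}+R+\Delta^{hor}-R(f)^\sharp$, then pulls back to the statement. The main obstacle is this bookkeeping. First, \cite[Thm.~7.1]{CKT21} is stated for snc pairs and orbifold boundaries with coefficients of the form $1-1/m$; as in \Cref{T:CKT21-7.3}, one must check that its proof, via \cite{Fujino17} and \cite{CP15}, goes through in the lc setting. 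Second, the vertical part of $R$ must cancel correctly against $R(f)^\sharp$. If this fails, the fallback is to rerun the Campana--P\u aun argument directly: restrict to the general fiber, apply \Cref{T:CKT21-7.3} to a log-resolution of the branched situation, and subtract the effective divisor $\mathbf R(p)$ using the positivity of the relative Bergman metric along multiple fibers.
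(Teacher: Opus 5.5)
Your main route has a genuine gap: the step you postpone as ``bookkeeping'' is essentially the whole theorem. \cite[Thm.~7.1]{CKT21} is stated for a smooth projective variety with a reduced snc boundary. Its proof goes through a base change by an adapted cover, a resolution of the fiber product, and an explicit local computation that uses this smoothness. The situation differs from \Cref{T:CKT21-7.3}, where the needed lc extension is literally \cite[Thm.~1.1]{Fujino17}. There is no lc or orbifold version of Thm.~7.1 on the singular coarse space $X$ available to cite, so saying its proof ``goes through'' for $(X,R+\Delta^{hor})$ assumes what is to be proved.

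The bookkeeping you do carry out is also wrong. \'Etale locally $\pi^*D=e\,\mathcal D$ with $\mathcal D=(\pi^*D)_{\mathrm{red}}$. So $\mathcal D$ appears in $p^*P$ with multiplicity $e\,m_D$, not $m_D/e$, and $\mathbf R(p)$ has coefficient $em_D-1=e(m_D-1)+(e-1)$ along $\mathcal D$. Thus $\mathbf R(p)=\pi^*(R(f)+R^{vert})$, and the claim on $X$ is that $K_{X/Z}+R^{hor}+\Delta^{hor}-R(f)$ is pseudo-effective.

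Your hypothesis transfer also fails as stated. Pseudo-effectivity of $K_{X/Z}+R+\Delta^{hor}$ does not give that of $K_{X/Z}+R^{hor}+\Delta^{hor}$, because you cannot drop the effective divisor $R^{vert}$. Only the restriction to a general fiber transfers, which is in fact all the argument needs. Pushing everything through on $X$ might be possible, but it would mean redoing the adapted base change and resolution on a variety with quotient singularities, and the proposal does none of this.

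The missing idea is to run the Claudon--Kebekus--Taji construction on the stack itself. The paper takes four ingredients:
\begin{enumerate}
\item a strongly adapted cover $\widehat Z\to Z$ of $(Z,\operatorname{OrbiBranch}(p))$, with multiplicities computed on $\mathcal X$;
\item a log resolution $\widetilde Z\to\widehat Z$;
\item a resolution $b$ of the indeterminacy of $p$ (\Cref{P:KT23-4.4});
\item a strong log resolution $a\colon \widetilde{\mathcal X}\to\overline{\mathcal X}$ of the fiber product.
\end{enumerate}
Since $\mathcal X$ is smooth and $\mathbf\Delta$ is snc, the computation of \cite[Pf.~Thm.~7.1, Steps~2--4]{CKT21} is \'etale local and carries over verbatim. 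It gives $K_{\widetilde{\mathcal X}/\widetilde Z}+\widetilde{\mathbf\Delta}^{h}+\mathcal E''\sim_{\mathbb Q}(b\circ a)^*(K_{\mathcal X/Z}+\mathbf\Delta^{hor}-\mathbf R(p))$ with $\mathcal E''$ exceptional, and with $K_{\widetilde{\mathcal X}/\widetilde Z}+\widetilde{\mathbf\Delta}^{h}$ pseudo-effective on the general fiber. Only at this point does one pass to coarse spaces, through \Cref{T:CKT21-7.3}. One then concludes by pushing forward, since $\mathcal E''$ is exceptional.

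Your fallback gestures at this but omits the adapted base change. That base change is exactly what absorbs $-\mathbf R(p)$ into a relative canonical divisor. ``Positivity of the relative Bergman metric along multiple fibers'' does not replace that step.
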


\begin{proof}
The first step is to construct the following diagram according to the procedure given in \cite[Pf.~Thm.~7.1, Step 1]{CKT21}:
\begin{equation}\label{E:CKT21-7.1diag}
\xymatrix@C=6em{
\widetilde {\mathcal X}  \ar[rr]^a_{\text{strong log res. of fiber product}} \ar[d]_{\tilde p}& & \overline{\mathcal{X}} \ar[r]^{b} \ar[d]_{\overline{p}} &\mathcal X  \ar@{.>}[d]
^p \\
\widetilde Z \ar[r]^{\sigma}_{\text{log res.}}& \widehat Z \ar[r]^\beta_{\stackrel{\text{strngly adpt. cov.}}{ (Z,\operatorname{OrbiBranch}(p))}} & Z \ar@{=}[r] &  Z
}
\end{equation}

To briefly review the construction, the orbifold branch divisor $\operatorname{OrbiBranch}(p)$ on $Z$ is defined in \cite[Def.~2.16]{CKT21}, and the
morphism $\beta:\widehat Z\to Z$ is a strongly adapted cover (\cite[Def.~2.37]{CKT21}) associated to the pair $(Z,\operatorname{OrbiBranch}(p))$, as constructed in \cite[Prop.~2.38]{CKT21}. The morphism $b:\overline{\mathcal{X}}\to \mathcal{X}$ is defined to be a strong schematic resolution of the indeterminacy locus of $p$ (see \Cref{P:KT23-4.4}).  
The morphism $\sigma:\widetilde Z\to \widehat Z$ is a  log resolution of the pair $(\widehat Z, \beta^*\operatorname{OrbiBranch}(p))$.  We then consider a strong log resolution of the fibered product $\widetilde Z\times _Z\mathcal X$ (i.e., it is an isomorphism over the smooth locus) and we obtain a generically finite
proper morphism $a:\widetilde {\mathcal X}\to \overline{\mathcal X}$, where $\widetilde {\mathcal X}$ is a smooth proper integral DM stack of finite type over $\mathbb C$ with projective coarse moduli space $\widetilde X$, 
and the union of the strict transform $\widetilde {\mathbf \Delta}$ of $\mathbf \Delta$, with the exceptional divisors of $a$, forms an snc divisor on $\widetilde {\mathcal X}$ 
(e.g.,  \cite[Thm.~1.3]{CMZslope_stability}).

Note that the fibers of $\tilde p:\widetilde {\mathcal X}\to \widetilde Z$ are connected.  Indeed, one can check this on the associated morphism $\widetilde X\to \widetilde Z$ on coarse moduli spaces.  One then has a morphism of normal projective varieties, with connected generic fiber.  Taking the Stein Factorization, which one can assume factors through a normal variety, and then using Zariski's main theorem, it follows that $\widetilde X\to \widetilde Z$ has connected fibers.  

An \'etale local computation carried out in \cite[Pf.~Thm.~7.1, Step 2--4]{CKT21}, which therefore carries over to the situation of DM stacks, then shows  (\cite[Consequence 7.8--9]{CKT21}) that there is an snc divisor $\widetilde {\mathbf \Delta}^{h}$ on $\widetilde {\mathcal X}$ such that 
\begin{equation}\label{E:CKT2178}
K_{\widetilde {\mathcal X}/\widetilde Z} + \widetilde {\mathbf \Delta}^{h}+\mathcal{E}''\sim_{\mathbb Q} (b\circ a)^*(K_{\mathcal X/Z}+\mathbf \Delta^{hor}-\mathbf R(p)),
\end{equation}
where $\mathcal{E}''$ is a $(b\circ a)$-exceptional divisor, and, moreover,  if $\widetilde {\mathcal F}$ is a general fiber of $\tilde p$, then the restriction $(K_{\widetilde {\mathcal X}/\widetilde Z} + \widetilde {\mathbf \Delta}^{h})|_{\widetilde {\mathcal F}}$ is pseudo-effective.  
By virtue of  \Cref{T:CKT21-7.3}, it follows  that $K_{\widetilde {\mathcal X}/\widetilde Z} + \widetilde {\mathbf \Delta}^{h}$ is pseudo-effective.  As $\mathcal{E}''$ is a $(b\circ a)$-exceptional divisor, it then follows from \eqref{E:CKT2178} and \cite[Cor.~3.19]{CMZslope_stability} that 
$K_{\mathcal X/Z}+\mathbf \Delta^{hor}-\mathbf R(p)$ is pseudo-effective.
\end{proof}

\section{Adapting Schnell's proof of the theorem of  Campana--P\u aun}\label{S:SchnellProof}

The goal of this section is to generalize \cite[Thm.~7.6]{CPFol19} to the setting of DM stacks (\Cref{T:SonCP-T1}); we follow the proof given in \cite[Thm.~1]{Schnell17Epi}.

\begin{teo}
\label{T:SonCP-T1}
Let $\mathcal X$ be a smooth proper integral  DM stack of finite type over $\mathbb C$ with projective coarse moduli space. Let $\mathbf \Delta\subseteq \mathcal X$ be a reduced divisor with at worst normal crossing singularities.
If some positive tensor power of $\Omega_{\mathcal X}^1(\log \mathbf \Delta)$ contains a subsheaf with big determinant, then $K_{\mathcal X}+\mathbf \Delta$ is big.
\end{teo}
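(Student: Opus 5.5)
The plan is to follow Schnell's argument \cite[Thm.~1]{Schnell17Epi}. Deduce \Cref{T:SonCP-T1} from the stronger \Cref{TA:SonCP-T4}, and prove the latter by induction on $\dim\mathcal X$ and contradiction. Bigness of $K_{\mathcal X}+\mathbf\Delta$ then comes in two steps.

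First, use \Cref{TA:SonCP-T4} to show that $K_{\mathcal X}+\mathbf \Delta$ is pseudo-effective. Indeed, $\Omega^1_{\mathcal X}(\log\mathbf\Delta)^{\otimes n}$ surjects onto $\det\Omega^1_{\mathcal X}(\log \mathbf\Delta)=\mathcal O(K_{\mathcal X}+\mathbf \Delta)$. Second, suppose a subsheaf $\mathcal B\subseteq \Omega^1_{\mathcal X}(\log\mathbf\Delta)^{\otimes N}$ has big determinant. Consider the quotient $\mathcal Q=\Omega^{\otimes N}/\mathcal B^{sat}$, which is torsion-free. By \Cref{TA:SonCP-T4}, $c_1(\mathcal Q)$ is pseudo-effective. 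Hence
\[
N n^{N-1}(K_{\mathcal X}+\mathbf\Delta)=c_1(\mathcal B^{sat})+c_1(\mathcal Q),
\]
and $c_1(\mathcal B^{sat})\ge c_1(\mathcal B)$ is big. A big class plus a pseudo-effective class is big, and this can be checked on the coarse space after passing to multiples, as in \cite{CMZpositivity}.

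For \Cref{TA:SonCP-T4} itself, suppose a torsion-free quotient $\Omega^{\otimes N}\twoheadrightarrow\mathcal Q$ is not pseudo-effective. Using the stacky duality between pseudo-effective divisors and movable curves from \cite{CMZslope_stability}, pick a movable $\alpha$ with $c_1(\mathcal Q)\cdot\alpha<0$. Then $\mu^{\min}_\alpha(\Omega^1(\log\mathbf\Delta))<0$, since $\mu^{\min}$ of tensor powers is additive by \cite[Thm.~3.27]{CMZslope_stability}. Dualize and take the maximal destabilizing subsheaf $\mathcal F\subseteq\mathcal T_{\mathcal X}(-\log\mathbf\Delta)\subseteq\mathcal T_{\mathcal X}$. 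Concretely, take the first piece of the Harder--Narasimhan filtration with $\mu^{\min}_\alpha(\mathcal F)>0$ and saturate it in $\mathcal T_{\mathcal X}$. Two things remain to check for $\mathcal F$. The first is closure under the bracket: the O'Neill tensor $\wedge^2\mathcal F\to\mathcal T_{\mathcal X}/\mathcal F$ vanishes by slope comparison, since $\mu^{\min}(\wedge^2\mathcal F)>\mu^{\max}(\mathcal T/\mathcal F)$. The second is the requirement that every quotient of $\mathcal F$ has positive slope. \Cref{T:SonCP-T8} then gives a rational map $p:\mathcal X\dashrightarrow Z$ with $\mathcal F=\mathcal T_{\mathcal X/Z}$ generically.

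Next, modify $p$ to be essentially equidimensional. Flatten on the coarse space and pull back, or replace $\mathcal X$ by a resolution via \Cref{P:KT23-4.4}; either way, keep track that $\alpha$-slopes and pseudo-effectivity transfer, using \cite[Cor.~3.19]{CMZslope_stability}. The logarithmic condition $\mathcal F\subseteq \mathcal T(-\log\mathbf\Delta)$ gives
\[
(K_{\mathcal X/Z}+\mathbf\Delta^{hor}-\mathbf R(p))\cdot\alpha\le -c_1(\mathcal F)\cdot\alpha<0,
\]
by \eqref{E:c1TX/Z}, since the horizontal components are tangent to the leaves.

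On the other side, the general fiber $\mathcal X_z$ has smaller dimension, and $\Omega^1_{\mathcal X}(\log\mathbf\Delta)$ restricts to an extension whose quotient is $\Omega^1_{\mathcal X_z}(\log\mathbf\Delta^{hor}|_{\mathcal X_z})$. The induction should be organized, as in Schnell, as the statement ``$\mathcal T(-\log\mathbf\Delta)$ has no subsheaf with positive minimal slope unless $K+\mathbf\Delta$ is pseudo-effective'', applied to fibers. This yields that $K_{\mathcal X_z}+\mathbf\Delta^{hor}|_{\mathcal X_z}$ is pseudo-effective. \Cref{T:CKT21-7.3} then makes $K_{\mathcal X/Z}+\mathbf\Delta^{hor}$ pseudo-effective, and \Cref{T:CKT21-7.1} makes $K_{\mathcal X/Z}+\mathbf\Delta^{hor}-\mathbf R(p)$ pseudo-effective. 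This contradicts the negative intersection with the movable class $\alpha$.

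I expect the main obstacle to be organizing this induction correctly. One must keep the hypothesis (a big subsheaf, or positivity of $\mathcal F$'s slope) meaningful on the fibers, and justify the restriction of $\alpha$-slope information to the general fiber on a stack. The case $\mathcal F=\mathcal T_{\mathcal X}$ with $Z$ a point, i.e.\ rationally connected $\mathcal X$, must be excluded separately using the big-determinant hypothesis, which forces $H^0$ of tensor powers of $\Omega^1(\log)$ to be nonzero.
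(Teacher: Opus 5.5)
Your overall architecture matches the paper's. You deduce \Cref{T:SonCP-T1} from \Cref{T:SonCP-T4} via the saturated big subsheaf, and argue by contradiction via a movable $\alpha$, the maximal destabilizing subsheaf, \Cref{T:SonCP-T8}, $\dim Z\ge 1$, and \Cref{T:CKT21-7.3}, \Cref{T:CKT21-7.1}. But the step you flag as the main obstacle is exactly where the big-determinant hypothesis enters, and the organization you propose for it does not work.

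The statement you suggest inducting on, that $\mathcal T_{\mathcal X}(-\log\mathbf\Delta)$ has no subsheaf of positive minimal slope unless $K_{\mathcal X}+\mathbf\Delta$ is pseudo-effective, is false as written (take $\mathcal X=\mathbb P^1$, $\mathbf\Delta=0$). In any form it also carries no trace of bigness, and without bigness the fiberwise conclusion really fails: in the paper's $C\times\mathbb P^1$ example every step goes through except that the fiber $\mathbb P^1$ has non-pseudo-effective canonical class. Nor does any $\alpha$-slope information have to be restricted to fibers.

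The missing argument is \Cref{P:SonCP-P13}:
\begin{enumerate}
\item By \Cref{L:SonCP-L3} there is a big line bundle $\mathcal L\hookrightarrow\Omega^1_{\mathcal X}(\log\mathbf\Delta)^{\otimes k}$, and its restriction to a general fiber $\mathcal X_z$ is still big.
\item The conormal sequence $0\to\mathcal N^\vee_{\mathcal X_z/\mathcal X}\to\Omega^1_{\mathcal X}(\log\mathbf\Delta)|_{\mathcal X_z}\to\Omega^1_{\mathcal X_z}(\log\mathbf\Delta_{\mathcal X_z})\to 0$, with $\mathcal N^\vee_{\mathcal X_z/\mathcal X}$ trivial, filters the $k$-th tensor power. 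Its graded pieces are direct sums of $\Omega^1_{\mathcal X_z}(\log\mathbf\Delta_{\mathcal X_z})^{\otimes j}$, $0\le j\le k$.
\item Hence some induced map $\mathcal L|_{\mathcal X_z}\to\Omega^1_{\mathcal X_z}(\log\mathbf\Delta_{\mathcal X_z})^{\otimes j}$ is nonzero, and bigness forces $j\ge 1$.
\item The inductive hypothesis is then just \Cref{T:SonCP-T1} in lower dimension: $K_{\mathcal X_z}+\mathbf\Delta_{\mathcal X_z}$ is big, hence pseudo-effective, and \Cref{T:CKT21-7.3} applies.
\end{enumerate}

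The second problem is where these results are applied. ``Modifying $p$ to be essentially equidimensional'' by flattening or resolving gives no space on which both \Cref{T:CKT21-7.3} (which needs a morphism from a smooth proper stack) and \Cref{T:CKT21-7.1} (which needs essential equidimensionality) hold. Flattening in general produces a singular total space whose resolution is no longer flat. The resolution $\overline p:\overline{\mathcal X}\to Z$ is in general not equidimensional. And since \cite[Cor.~3.19]{CMZslope_stability} only pushes pseudo-effectivity down (its converse can fail), you cannot work on $\overline{\mathcal X}$ throughout.

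The paper needs no modification, because it uses that $p$ itself, the quotient map coming from the Chow-variety construction, is essentially equidimensional on $\mathcal X$. It then proceeds in three moves:
\begin{enumerate}
\item It runs the fiber argument for $\overline p$; note that $b^*\mathcal L$ still sits in a tensor power of $\Omega^1_{\overline{\mathcal X}}(\log\bar{\mathbf\Delta})$.
\item It pushes pseudo-effectivity of $K_{\overline{\mathcal X}/Z}+\bar{\mathbf\Delta}^{hor}$ down to $\mathcal X$ and applies \Cref{T:CKT21-7.1} to $p$ on $\mathcal X$.
\item This contradicts $(K_{\mathcal X/Z}+\mathbf\Delta^{hor}-\mathbf R(p))\cdot\alpha<0$, which is computed on $\overline{\mathcal X}$ with $b^*\alpha$.
\end{enumerate}

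Two smaller corrections. First, that inequality should be bounded by $-c_1(\mathcal F_{\mathbf\Delta})\cdot\alpha$, with $\mathcal F_{\mathbf\Delta}$ the logarithmic subsheaf. For the $\mathcal T_{\mathcal X}$-saturation $\mathcal F$ one has $c_1(\mathcal F)=-K_{\mathcal X/Z}+\mathbf R(p)$, so $\mathbf\Delta^{hor}\cdot\alpha\ge 0$ pushes your bound the wrong way. The real input is that $c_1(\mathcal F)-c_1(\mathcal F_{\mathbf\Delta})-\mathbf\Delta^{hor}$ is effective (\Cref{L:SonCP-L12}). Second, the O'Neill slope comparison must be made for $\mathcal F_{\mathbf\Delta}$ against $\mathcal T_{\mathcal X}(-\log\mathbf\Delta)/\mathcal F_{\mathbf\Delta}$ and then transferred to the saturation by a torsion argument. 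This is necessary because $\mu^{\max}_\alpha(\mathcal T_{\mathcal X}/\mathcal F)$ is not controlled by the Harder--Narasimhan filtration of $\mathcal T_{\mathcal X}(-\log\mathbf\Delta)$.
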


\begin{rem}
Note that \cite{CPFol19} prove \Cref{T:SonCP-T1} under the additional assumption that $\mathcal X=[V/G]$ is the quotient of a smooth projective variety $V$ by a finite group $G$.  In fact, if $\mathcal X=[V/G]$, the result follows immediately from the case of smooth projective varieties by pull-back to $V$ via the finite \'etale morphism $V\to [V/G]$.
\end{rem}

The rest of this section is devoted to proving \Cref{T:SonCP-T1}.

\subsection{First reductions}\label{S:SonCPS2}
Here we follow the reduction in \cite[\S 2]{Schnell17Epi}.
 By assumption in  \Cref{T:SonCP-T1}, some positive tensor power $\Omega_{\mathcal X}^1(\log \mathbf \Delta)^{\otimes N}$ contains a subsheaf $\mathcal H$ with big determinant.  
 Let $\mathcal H^{\operatorname{sat}}$ be  the saturation of $\mathcal H$ in $\Omega_{\mathcal X}^1(\log \mathbf \Delta)^{\otimes N}$.  The claim is that $\mathcal H^{\operatorname{sat}}$ also has big determinant.  Indeed, we have a short exact sequence
 $$
 0\to \mathcal H\to \mathcal H^{\operatorname{sat}}\to \mathcal H^{\operatorname{sat}}/\mathcal H\to 0.
 $$
As the torsion sheaf on the right is by definition the quotient of a torsion-free sheaf, we may take determinants, and we have that 
$$
\det(\mathcal H^{\operatorname{sat}}) = \det (\mathcal H)\otimes \det ( \mathcal H^{\operatorname{sat}}/\mathcal H)
$$
is a big line bundle tensored by an effective line bundle  
\cite[Lem.~1.1]{CMZslope_stability}; 
therefore  we have that $\det(\mathcal H^{\operatorname{sat}})$ is big 
(\cite[Lem.~2.6]{CMZpositivity}). 

Thus, without loss of generality, we may assume that $\mathcal H$ is saturated in $\Omega_{\mathcal X}^1(\log \mathbf \Delta)^{\otimes N}$.
Let $\mathcal Q$ denote the quotient sheaf, which is now torsion-free by assumption, and consider the
resulting short exact sequence 
\begin{equation}\label{E:Rdx-ses-L}
0\to \mathcal H\to \Omega^1_{\mathcal X}(\log \mathbf \Delta)^{\otimes N} \to \mathcal Q\to 0.
\end{equation}

Since $K_{\mathcal X}+\mathbf \Delta$ 
represents the first Chern class of $\Omega^1_{\mathcal X}(\log \mathbf \Delta)$, we obtain 
$$
N\cdot (\dim \mathcal X)^{N-1} \cdot (K_{\mathcal X}+\mathbf \Delta)=c_1(\mathcal H)+c_1(\mathcal Q)
$$
in $\operatorname{CH}^1(\mathcal X)_{\mathbb Q}$.
Note that in our discussion here, for a sheaf $\mathcal F$ on $\mathcal X$ that is the quotient of a torsion-free sheaf, we will, for brevity, write $c_1(\mathcal F)$ when we mean the class of $c_1(\mathcal F)\cap [\mathcal X]$ in  $\operatorname{CH}^1(\mathcal X)_{\mathbb Q}$.

By assumption, the class $c_1(\mathcal H)$ is big; \Cref{T:SonCP-T1} will therefore be proved if we manage to show that the class
$c_1(\mathcal Q)$ is pseudo-effective. In fact, we are going to prove the following more general result, which is a generalization of \cite[Thm.~7.6, Thm.~1.2]{CPFol19} to the case of DM stacks.

\begin{teo}

\label{T:SonCP-T4}
 Let $\mathcal X$ be a smooth proper integral DM stack over $\mathbb C$ with projective coarse moduli space,  and let $ \mathbf \Delta \subseteq \mathcal X$ be a reduced divisor with at worst
normal crossing singularities. Suppose that some positive tensor power of  $\Omega^1_{\mathcal X}(\log \mathbf \Delta)$  contains a subsheaf
with big determinant. Then the first Chern class of every torsion-free quotient sheaf of every (positive) tensor power of $\Omega^1_{\mathcal X}(\log \mathbf \Delta)$ is  pseudo-eﬀective.
\end{teo}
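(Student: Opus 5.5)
We argue by induction on $n=\dim\mathcal X$ and by contradiction, following Schnell's organization of the Campana--P\u aun argument. Suppose some torsion-free quotient $\Omega^1_{\mathcal X}(\log\mathbf\Delta)^{\otimes N}\twoheadrightarrow\mathcal Q$ has $c_1(\mathcal Q)$ not pseudo-effective. By the duality between pseudo-effective divisors and movable curves on DM stacks established in \cite{CMZslope_stability}, there is a movable class $\alpha\in\operatorname{N}_1(\mathcal X)_{\mathbb R}$ with $c_1(\mathcal Q)\cdot\alpha<0$. Then $\mu_\alpha^{\min}(\Omega^1_{\mathcal X}(\log\mathbf\Delta)^{\otimes N})<0$. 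Since the minimal slope of a tensor power is $N$ times the minimal slope (\cite[Thm.~3.27]{CMZslope_stability}), this gives $\mu^{\min}_\alpha(\Omega^1_{\mathcal X}(\log\mathbf\Delta))<0$. Dually, $\mu^{\max}_\alpha(\mathcal T_{\mathcal X}(-\log\mathbf\Delta))>0$.

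Let $\mathcal F\subseteq\mathcal T_{\mathcal X}(-\log\mathbf\Delta)$ be the maximal destabilizing subsheaf of the Harder--Narasimhan filtration, or more precisely the largest piece all of whose graded pieces have positive $\alpha$-slope. Every nonzero quotient of $\mathcal F$ then has positive $\alpha$-slope. The standard argument shows $\mathcal F$ is closed under the Lie bracket: the O'Neill tensor $\wedge^2\mathcal F\to\mathcal T_{\mathcal X}(-\log\mathbf\Delta)/\mathcal F$ is $\mathcal O$-linear and vanishes by slope comparison (using \cite[Thm.~3.23, 3.27]{CMZslope_stability} and \cite[Lem.~3.4]{CMZslope_stability}). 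Its saturation in $\mathcal T_{\mathcal X}$ is therefore a foliation, still with positive quotient slopes, and it is tangent to $\mathbf\Delta$. By \Cref{T:SonCP-T8}, $\mathcal F$ is algebraic, $\mathcal F=\mathcal T_{\mathcal X/Z}$ generically for some $p:\mathcal X\dashrightarrow Z$. After a flattening of the Chow family, a further modification of $\mathcal X$ (\Cref{P:KT23-4.4}) and pulling back $\alpha$ (movability is preserved up to exceptional corrections), we may assume $p$ is essentially equidimensional with connected fibers. This is as in \cite{Schnell17Epi}, using that $\mathcal F$ is saturated.

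Next we compare two pieces of information about $D:=K_{\mathcal X/Z}+\mathbf\Delta^{hor}-\mathbf R(p)$. On one side, since $\mathcal F$ consists of log vector fields, the relative log tangent sheaf generically equals $\mathcal F$, and \eqref{E:c1TX/Z} (modified by $\mathbf\Delta^{hor}$) gives
\[
-(K_{\mathcal X/Z}+\mathbf\Delta^{hor}-\mathbf R(p))\cdot\alpha \;=\; c_1(\mathcal F)\cdot\alpha \;>\; 0,
\]
up to terms that pair nonnegatively with $\alpha$ in the right direction. Following Schnell's computation, this gives $D\cdot\alpha<0$. So $D$, and hence $K_{\mathcal X/Z}+\mathbf\Delta^{hor}+\mathbf R(p)$ in the form needed, is not pseudo-effective.

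On the other side, the general fiber $(\mathcal X_z,\mathbf\Delta|_{\mathcal X_z})$ is a smooth proper DM stack of smaller dimension, since $\operatorname{rk}\mathcal F<n$. If $\mathcal F=\mathcal T_{\mathcal X}(-\log\mathbf\Delta)$, then $Z$ is a point and $K_{\mathcal X}+\mathbf\Delta$ is negative on $\alpha$, which contradicts bigness of $\det\mathcal H$: bigness of $\det\mathcal H$ forces $\mu_\alpha^{\max}>0$ on $\Omega^{\otimes N}$, contradicting the negative maximal slope. Otherwise, the restriction $\Omega^1_{\mathcal X}(\log\mathbf\Delta)\to\Omega^1_{\mathcal X_z}(\log\mathbf\Delta_z)$ is surjective, and the big subsheaf restricts, after modification, to give a subsheaf with big determinant on the fiber. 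Thus the induction hypothesis makes $K_{\mathcal X_z}+\mathbf\Delta_z$ pseudo-effective. By \Cref{T:CKT21-7.3}, $K_{\mathcal X/Z}+\mathbf\Delta^{hor}$ is pseudo-effective, and by \Cref{T:CKT21-7.1} so is $D$. This contradicts $D\cdot\alpha<0$.

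The main obstacle is this inductive step. We must verify that the fiber inherits a subsheaf of a tensor power of its log cotangent sheaf with big determinant; a restriction of a big line bundle to a general fiber is big, but the subsheaf must survive the restriction map. In Schnell's argument this is handled by running the induction on the stronger hypothesis and carefully ordering the contradiction. I would mirror that organization, treating the case where the foliation is everything separately, as above.
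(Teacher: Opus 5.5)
\textbf{Gap 1: essential equidimensionality and being a morphism on the same model.} You say that after flattening the Chow family, modifying $\mathcal X$ and pulling back $\alpha$, you may assume $p$ is essentially equidimensional with connected fibres. You then use that single model for two incompatible purposes. Restricting to general fibres and applying \Cref{T:CKT21-7.3} needs $p$ to be a morphism. Applying \Cref{T:CKT21-7.1} needs $p$ essentially equidimensional. In general these cannot both hold on one smooth model. Resolving the indeterminacy by $b:\overline{\mathcal X}\to\mathcal X$ typically creates exceptional divisors lying over loci of codimension $\ge 2$ in $Z$, so $\overline p$ is not essentially equidimensional. Flattening instead modifies $Z$ and gives a singular total space, and resolving that space breaks equidimensionality again.

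The paper avoids this with a two-level argument (\Cref{L:Spock-not-Spock}). It keeps $p$ on the original $\mathcal X$, where it is essentially equidimensional because its fibres come from the equidimensional Chow family of leaf closures. The fibrewise induction is run for the morphism $\overline p$ on $\overline{\mathcal X}$, which shows $K_{\overline{\mathcal X}/Z}+\bar{\mathbf\Delta}^{hor}$ is pseudo-effective. This is pushed down to $K_{\mathcal X/Z}+\mathbf\Delta^{hor}$ by \cite[Cor.~3.19]{CMZslope_stability}, since the two agree off the $b$-exceptional locus. \Cref{T:CKT21-7.1} is then applied on $\mathcal X$. Separately, the inequality $(K_{\overline{\mathcal X}/Z}+\bar{\mathbf\Delta}^{hor}-\mathbf R(\overline p))\cdot b^*\alpha<0$ is computed upstairs and transported down by \cite[Lem.~3.18(b)]{CMZslope_stability}. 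The paper's closing remark explains why working with $\overline p$ and $b^*\alpha$ throughout does not work.

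\textbf{Gap 2: the inductive step.} The step you flag as the main obstacle needs a concrete argument; reordering the contradiction does not supply one.
\begin{enumerate}
\item Replace the subsheaf with big determinant by a big line bundle $\mathcal L\subseteq\Omega^1_{\mathcal X}(\log\mathbf\Delta)^{\otimes k}$ (\Cref{L:SonCP-L3}).
\item Pull $\mathcal L$ back to $\overline{\mathcal X}$ and restrict it to a general fibre $\mathcal X_z$ of $\overline p$.
\item Use the conormal sequence
$0\to\mathcal N^\vee_{\mathcal X_z/\overline{\mathcal X}}\to\Omega^1_{\overline{\mathcal X}}(\log\bar{\mathbf\Delta})|_{\mathcal X_z}\to\Omega^1_{\mathcal X_z}(\log\mathbf\Delta_{\mathcal X_z})\to 0$.
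Its conormal bundle is trivial, so the $k$-th tensor power is filtered with graded pieces that are sums of $\Omega^1_{\mathcal X_z}(\log\mathbf\Delta_{\mathcal X_z})^{\otimes j}$ for $0\le j\le k$.
\item The nonzero map from $\mathcal L_{\mathcal X_z}$ induces a nonzero map to one of these pieces. The case $j=0$ is impossible, because a big line bundle admits no nonzero map to $\mathcal O$.
\end{enumerate}
This is exactly where bigness, rather than pseudo-effectivity, is used.

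\textbf{Two smaller corrections.}
\begin{itemize}
\item The saturation of $\mathcal F_{\mathbf\Delta}$ in $\mathcal T_{\mathcal X}$ is not tangent to $\mathbf\Delta$. It differs from $\mathcal F_{\mathbf\Delta}$ precisely along the horizontal components, and that difference is what produces $\mathbf\Delta^{hor}$. Concretely, the effectivity of $c_1(\overline{\mathcal F})-c_1(\overline{\mathcal F}_{\bar{\mathbf\Delta}})-\bar{\mathbf\Delta}^{hor}$ (\Cref{L:SonCP-L12}), combined with $c_1(\overline{\mathcal F})=-K_{\overline{\mathcal X}/Z}+\mathbf R(\overline p)$, gives the strict negativity.
\item The divisor shown to be both pseudo-effective and negative on $\alpha$ is $K_{\mathcal X/Z}+\mathbf\Delta^{hor}-\mathbf R(p)$, not the version with $+\mathbf R(p)$.
\end{itemize}
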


\begin{rem}
Note that \cite{CPFol19} prove \Cref{T:SonCP-T4} under the additional assumption that $\mathcal X=[V/G]$; i.e., that $\mathcal X$ is the quotient of a smooth projective variety $V$ by a finite group $G$.  In fact, if $\mathcal X=[V/G]$, the result follows immediately from the case of smooth projective varieties by pull-back to $V$ via the finite \'etale morphism $V\to [V/G]$.
\end{rem}

\begin{rem}
Let $\mathcal Q$ denote a quotient of a  tensor power of  $\Omega^1_{\mathcal X}(\log \mathbf \Delta)$  in  \Cref{T:SonCP-T4}.
The reason for the assumption in \Cref{T:SonCP-T4} that  $\mathcal Q$ be torsion-free is simply because we do not know if the torsion subsheaf $\mathcal Q_{\operatorname{tor}}\subseteq \mathcal Q$    is the quotient of a torsion-free sheaf, and so we do not have a well-defined determinant for $\mathcal Q_{\operatorname{tor}}$.  Assuming that  $\mathcal Q_{\operatorname{tor}}$ is the quotient of a torsion-free sheaf (e.g., if the stabilizer at the generic point of $\mathcal X$ is trivial \cite[Thm.~1.2]{totaro_2004}), then it follows immediately from   \Cref{T:SonCP-T4} applied to $\mathcal Q/\mathcal Q_{\operatorname{tor}}$, and basic properties of determinants,  that $c_1(\mathcal Q)=c_1(\mathcal Q_{\operatorname{tor}})+ c_1(\mathcal Q/\mathcal Q_{\operatorname{tor}})$ is pseudo-effective.  
\end{rem}

\subsection{A few observations for the proof}

We start with  the following observation, which generalizes \cite[Lem.~3]{Schnell17Epi}:

\begin{lem}

\label{L:SonCP-L3}

If for some positive integer $N$ the tensor product $\Omega^1_{\mathcal X}(\log \mathbf \Delta)^{\otimes N}$ contains a subsheaf 
 of generic rank $r\ge 1$ and with big determinant, then
 $\Omega^1_{\mathcal X}(\log \mathbf \Delta)^{\otimes Nr}$ 
contains a big line bundle.
\end{lem}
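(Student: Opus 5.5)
The plan is to take the top exterior power of the subsheaf. Let $\mathcal H\subseteq \Omega^1_{\mathcal X}(\log \mathbf \Delta)^{\otimes N}$ have generic rank $r$ and big determinant, and put $\mathcal E:=\Omega^1_{\mathcal X}(\log \mathbf \Delta)$, which is locally free. Composing the inclusion $\wedge^r\mathcal H\to \wedge^r(\mathcal E^{\otimes N})$ with the antisymmetrization map $\wedge^r(\mathcal E^{\otimes N})\hookrightarrow (\mathcal E^{\otimes N})^{\otimes r}=\mathcal E^{\otimes Nr}$ gives a morphism $\wedge^r\mathcal H\to \mathcal E^{\otimes Nr}$. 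Working in characteristic zero, the antisymmetrization map is split injective. The composite is generically injective, since at the generic point it is the inclusion of the top exterior power of an $r$-dimensional subspace.

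Next I pass to the determinant line bundle, using the paper's conventions for determinants of coherent sheaves on smooth DM stacks from \cite{CMZpositivity,CMZslope_stability}. Since $\mathcal E^{\otimes Nr}$ is locally free, hence reflexive, the map from $\wedge^r\mathcal H$ modulo torsion extends over the codimension-$\ge 2$ locus where $\mathcal H$ fails to be locally free. The result is a morphism $\det\mathcal H=(\wedge^r\mathcal H)^{\vee\vee}\to \mathcal E^{\otimes Nr}$, nonzero and therefore injective. This extension is checked étale locally, so it reduces to the scheme case. Here $\det\mathcal H$ is a line bundle because $\mathcal X$ is smooth, so reflexive rank-one sheaves are invertible (again checked on an étale atlas).

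Then $\det\mathcal H$ is a big line bundle contained in $\Omega^1_{\mathcal X}(\log\mathbf\Delta)^{\otimes Nr}$, which proves the lemma. If one wants the image to be saturated, replace it by its saturation. The saturated line bundle differs from $\det\mathcal H$ by an effective divisor \cite[Lem.~1.1]{CMZslope_stability}, so it remains big by \cite[Lem.~2.6]{CMZpositivity}.

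The only step needing care is the identification $\det\mathcal H\cong(\wedge^r\mathcal H)^{\vee\vee}$ on a DM stack, together with the extension across codimension two. Both are étale local statements, so they follow from the variety case, and I expect no real obstacle there.
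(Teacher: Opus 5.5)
Your proposal is correct and is essentially the paper's argument: on the complement of a codimension-$\ge 2$ locus where $\mathcal H$ is locally free, embed $\det\mathcal H=\wedge^r\mathcal H$ into $\Omega^1_{\mathcal X}(\log\mathbf\Delta)^{\otimes Nr}$ via antisymmetrization, then extend across codimension two using Hartogs/reflexivity of the locally free target. The only cosmetic difference is that the paper first replaces $\mathcal H$ by its saturation to get local freeness off codimension two, whereas you correctly note that torsion-freeness of $\mathcal H$ on a smooth stack already gives this, so that step (and your optional saturation at the end) is unnecessary.
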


\begin{proof}
The proof is identical to \cite[Lem.~3]{Schnell17Epi}; we include the proof here for completeness.  Let 
$\mathcal H\subseteq \Omega^1_{\mathcal X}(\log \mathbf \Delta)^{\otimes N}$ 
 be a subsheaf of generic rank $r \ge 1$, with the property that $\det \mathcal H$ is
big. As we saw in \S \ref{S:SonCPS2}, after replacing $\mathcal H$ by its saturation, whose determinant is  still big, we may assume that
the quotient sheaf
$
\mathcal Q:=\Omega^1_{\mathcal X}(\log \mathbf \Delta)^{\otimes N}/\mathcal H
$
is torsion-free, hence locally free outside a closed substack $\mathcal Z\subseteq \mathcal X$ of codimension $\ge 2$. On the open substack $\mathcal X-\mathcal Z$ of $\mathcal X$,
we have an inclusion of locally free sheaves
$$
\det \mathcal H \hookrightarrow \mathcal H^{\otimes r}\hookrightarrow \Omega^1_{\mathcal X}(\log \mathbf \Delta)^{\otimes Nr},
$$
which remains valid on $\mathcal X$ by Hartog's theorem.
\end{proof}

In other words, we are always free to assume in \Cref{T:SonCP-T1}  and \Cref{T:SonCP-T4} that the big subsheaf of the positive tensor power of $\Omega^1_{\mathcal X}(\log \mathbf \Delta)$ is a line bundle.   
We also make the following observation following \cite[Prop.~13]{Schnell17Epi}:

\begin{pro}

\label{P:SonCP-P13}
 Let $\mathcal X$ be a smooth proper DM stack over $\mathbb C$ with projective coarse moduli space,  and let $ \mathbf \Delta \subseteq \mathcal X$ be a reduced divisor with at worst
normal crossing singularities. Suppose that some positive tensor power of  $\Omega^1_{\mathcal X}(\log \mathbf \Delta)$  contains a subsheaf
with big determinant.  
If  \Cref{T:SonCP-T1} is true in dimension \emph{less than $\dim {\mathcal X}$}, and there is a surjective morphism $p:\mathcal X\to Z$ to a smooth projective variety $Z$ of dimension at least $1$, 
 then 
 $$
 K_{{\mathcal X}/Z}+\mathbf \Delta^{hor}
 $$
 is pseudo-effective.
\end{pro}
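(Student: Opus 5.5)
The plan follows Schnell's argument for \cite[Prop.~13]{Schnell17Epi}. I would reduce to the general fiber of $p$, apply the induction hypothesis there, and then globalize using \Cref{T:CKT21-7.3}.

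First, by \Cref{L:SonCP-L3} we may assume there is a big line bundle $\mathcal B\hookrightarrow \Omega^1_{\mathcal X}(\log \mathbf \Delta)^{\otimes N}$. Let $\mathcal F$ be a general fiber of $p$. Then $\mathcal F$ is a smooth proper DM stack with projective coarse moduli space (a fiber of the coarse map $f:X\to Z$). It has dimension $\dim \mathcal X-\dim Z<\dim\mathcal X$, and $\mathbf\Delta|_{\mathcal F}=\mathbf \Delta^{hor}|_{\mathcal F}$ is a reduced normal crossing divisor by generic smoothness. The vertical components of $\mathbf \Delta$ miss the general fiber.

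Next, I would produce a big subsheaf of a tensor power of $\Omega^1_{\mathcal F}(\log \mathbf\Delta|_{\mathcal F})$. On the general fiber there is a surjection $\Omega^1_{\mathcal X}(\log\mathbf\Delta)|_{\mathcal F}\twoheadrightarrow \Omega^1_{\mathcal F}(\log \mathbf\Delta|_{\mathcal F})$ whose kernel is the trivial bundle $p^*\Omega^1_Z|_{\mathcal F}\cong \mathcal O_{\mathcal F}^{\oplus \dim Z}$. Hence $\Omega^1_{\mathcal X}(\log\mathbf\Delta)^{\otimes N}|_{\mathcal F}$ has a filtration whose graded pieces are direct sums of $\Omega^1_{\mathcal F}(\log\mathbf\Delta|_{\mathcal F})^{\otimes j}$ for $0\le j\le N$. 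Restricting $\mathcal B$ to a general $\mathcal F$ gives a line bundle $\mathcal B|_{\mathcal F}$, and it is big: restricting a big divisor on $X$ to a general fiber of $f$ gives a big divisor. The inclusion $\mathcal B|_{\mathcal F}\hookrightarrow \Omega^1_{\mathcal X}(\log\mathbf\Delta)^{\otimes N}|_{\mathcal F}$ yields a nonzero map to some graded piece, hence a nonzero map $\mathcal B|_{\mathcal F}\to \Omega^1_{\mathcal F}(\log\mathbf\Delta|_{\mathcal F})^{\otimes j}$.

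If $j=0$, then a big line bundle maps nonzero into $\mathcal O_{\mathcal F}$. Its dual would then be effective, which is impossible for a big line bundle when $\dim \mathcal F\ge 1$. In the case $\dim\mathcal F=0$, $p$ is generically finite, and the claim should instead follow directly from $K_{\mathcal X/Z}=\mathbf R(p)$ being effective. So $j\ge1$, and the image of this map is a rank-one subsheaf containing a twist of the big bundle $\mathcal B|_{\mathcal F}$ by an effective divisor. Its saturation is therefore big by the argument of \S\ref{S:SonCPS2}. Applying \Cref{T:SonCP-T1} in dimension $\dim\mathcal F<\dim \mathcal X$ gives that $K_{\mathcal F}+\mathbf\Delta|_{\mathcal F}=(K_{\mathcal X/Z}+\mathbf\Delta^{hor})|_{\mathcal F}$ is big, in particular pseudo-effective.

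Finally, I would globalize. If $p$ does not have connected fibers, I would Stein factorize on coarse spaces, replace $Z$ by the normalization $Z'$ of the finite part and then by a resolution. This changes $K_{\mathcal X/Z}$ only by adding the pullback of an effective ramification divisor of $Z'\to Z$, so pseudo-effectivity is preserved. I would then apply \Cref{T:CKT21-7.3} with boundary $\mathbf\Delta^{hor}$, which is still normal crossing. That theorem needs only that the restriction to the general fiber be pseudo-effective, which is what the previous step gives, and it yields that $K_{\mathcal X/Z}+\mathbf\Delta^{hor}$ is pseudo-effective. I expect the main obstacle to be making the fiber restriction rigorous on stacks: checking bigness of $\mathcal B|_{\mathcal F}$ through the coarse space, and checking that the general fiber is a smooth proper DM stack with projective coarse space to which the induction hypothesis applies.
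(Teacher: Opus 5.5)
Your proposal is correct and follows the paper's proof essentially step for step. You restrict the big line bundle from \Cref{L:SonCP-L3} to a general fiber, use the filtration induced by the trivial conormal bundle to get a nonzero map into $\Omega^1_{\mathcal F}(\log \mathbf\Delta|_{\mathcal F})^{\otimes j}$ with $j\ge 1$, apply \Cref{T:SonCP-T1} on the lower-dimensional fiber, and globalize with \Cref{T:CKT21-7.3}.

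Your separate treatment of zero-dimensional and disconnected fibers goes beyond the paper, which invokes \Cref{T:CKT21-7.3} directly and tacitly assumes connected fibers. If you keep the Stein factorization step, note that $\mathcal X\to Z'$ does not lift to a resolution of $Z'$. You therefore also have to resolve $\mathcal X$ (as in \Cref{P:KT23-4.4}) and then push pseudo-effectivity back down via \cite[Cor.~3.19]{CMZslope_stability}.
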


\begin{proof} The proof is by induction on the dimension of $ {\mathcal X}$, with the case that $ {\mathcal X}$ is of dimension $0$  being obvious.
Let $ {\mathcal X_z}$ be a general fiber of the morphism $ {p}:  {\mathcal X} \to Z$; since $\dim Z \ge 1$, 
we have $\dim  {\mathcal X_z} < 
\dim  {\mathcal X}$. Denote by $\mathbf \Delta_{ {\mathcal X_z}}$ the restriction of $\mathbf \Delta$; since $ {\mathcal X_z}$ is a general fiber, $\mathbf \Delta_{ {\mathcal X_z}}$ is still a normal crossing
divisor. Clearly
$$
(K_{ {\mathcal X}/Z}+\mathbf \Delta^{hor})|_{ {\mathcal X_z}}=K_{ {\mathcal X_z}}+\mathbf \Delta_{ {\mathcal X_z}}, 
$$
and according to \Cref{T:CKT21-7.3}, the pseudo-effectivity of $K_{ {\mathcal X}/Z}+\Delta^{hor}$ will follow if we manage to show that $K_{ {\mathcal X_z}}+\mathbf \Delta_{ {\mathcal X_z}}$ is pseudo-effective. 

By hypothesis (and \Cref{L:SonCP-L3}), there is a nonzero morphism
$$
\mathcal L\to \Omega^1_{ {\mathcal X}}(\log \mathbf \Delta)^{\otimes k}
$$
from a big line bundle $\mathcal L$ to some tensor power of $\Omega^1_{ {\mathcal X}}(\log \mathbf \Delta)$. Since $ {\mathcal X_z}$ is a general fiber of the morphism $ {p}:  {\mathcal X} \to Z$,
we can restrict this morphism to $ {\mathcal X_z}$  to obtain a nonzero morphism
$$
\mathcal L_{ {\mathcal X_z}}\to \left(\Omega^1_{ {\mathcal X}}(\log \mathbf \Delta)|_{ {\mathcal X_z}}\right)^{\otimes k}.
$$
Here $\mathcal L_{ {\mathcal X_z}}$ denotes the restriction of $\mathcal L$ to the fiber; since $\mathcal L$ is big and $ {\mathcal X_z}$ is general, $\mathcal L_{ {\mathcal X_z}}$ is also big.

The inclusion of $ {\mathcal X_z}$ into $ {\mathcal X}$ gives rise to a short exact sequence
$$
0\to \mathcal N^\vee_{ {\mathcal X_z}/ {\mathcal X}} \to \Omega^1_{ {\mathcal X}}(\log \mathbf \Delta)|_{ {\mathcal X_z}} \to \Omega^1_{ {\mathcal X_z}}(\log \mathbf \Delta_{ {\mathcal X_z}}) \to 0
$$
which induces a filtration on the $k$-th tensor power of the locally free sheaf in the middle. Since the
co-normal bundle $\mathcal N^\vee_{ {\mathcal X_z}/ {\mathcal X}}$  is trivial of rank $\dim Z$, we find, by looking at the subquotients of this filtration,
that there is a nonzero morphism
\begin{equation}\label{E:SonCP-EP13}
\mathcal L_{ {\mathcal X_z}} \to  \Omega^1_{ {\mathcal X_z}}(\log \mathbf \Delta_{ {\mathcal X_z}})^{\otimes j}
\end{equation}
for some $0\le j\le k$.  Because $\mathcal L_{ {\mathcal X_z}} $ is big, we actually have $1\le j\le k$.  
Since we are assuming by induction that  \Cref{T:SonCP-T1} is true for the pair $( {\mathcal X_z},\mathbf \Delta_{ {\mathcal X_z}})$, the class $K_{ {\mathcal X_z}}+\mathbf \Delta_{ {\mathcal X_z}}$ is big on $ {\mathcal X_z}$, and hence pseudo-effective.  Appealing to \Cref{T:CKT21-7.3}, we deduce that the class $K_{ {\mathcal X}/Z}+\Delta^{hor}$ is pseudo-effective on $ {\mathcal X}$.
\end{proof}

The rest of this section is devoted to proving \Cref{T:SonCP-T4}.
In fact, following \cite{Schnell17Epi}, we will prove \Cref{T:SonCP-T4} and \Cref{T:SonCP-T1} simultaneously via induction on $\dim \mathcal X$.

\subsection{Set-up for proof by induction and contradiction}\label{S:SContra}

We now begin to  prove  \Cref{T:SonCP-T1} and \Cref{T:SonCP-T4}, following \cite{Schnell17Epi}.
The proof is by induction on $\dim \mathcal X$;  \Cref{T:SonCP-T1} and \Cref{T:SonCP-T4} are obvious if $\dim \mathcal X=0$.  So assume we have proven  \Cref{T:SonCP-T1} and \Cref{T:SonCP-T4} up to dimension $\dim \mathcal X-1$.  To prove both theorems for $\mathcal X$, it suffices to prove \Cref{T:SonCP-T4} for $\mathcal X$.  We proceed   to prove \Cref{T:SonCP-T4} by 
contradiction:
\begin{quote}
\emph{Suppose, for the sake of contradiction, that, for some
integer $N\ge 1$, and for some non-zero torsion-free quotient sheaf $\mathcal Q$ of 
$\Omega^1_{\mathcal X}(\log \mathbf \Delta)^{\otimes N}$, 
the class $c_1(\mathcal Q)$ were not pseudo-effective.}
\end{quote}

\subsection{Slopes and foliations}
By  the definition of 
 a movable class $\alpha\in \operatorname{N}_1(\mathcal X)_{\mathbb R}$ 
 (see \cite[Def.~2.1]{CMZslope_stability}),  assuming the existence of $\mathcal Q$ as above, i.e., $\mathcal Q$ is not pseudo-effective, there would be a movable class $\alpha$ 
  such that 
 $c_1(\mathcal Q)\cdot \alpha<0$.  
We therefore have that the $\alpha$-slope, $\mu_\alpha(\mathcal Q)$,  satisfies
$$
\mu_\alpha(\mathcal Q)=\frac{c_1(\mathcal Q)\cdot \alpha}{\operatorname{rk}\mathcal Q}<0,
$$
and so $\mathcal Q$ is a torsion-free quotient sheaf of $\Omega^1_{\mathcal X}(\log \mathbf \Delta)^{\otimes N}$ with negative $\alpha$-slope. 
 The dual sheaf $\mathcal Q^\vee$ is
therefore a 
subsheaf of ${\mathcal T}_{\mathcal X}(-\log \mathbf \Delta)^{\otimes N}$ with positive $\alpha$-slope, where we define ${\mathcal T}_{\mathcal X}(-\log \mathbf \Delta):=(\Omega^1_{\mathcal X}(\log \mathbf \Delta))^\vee$.  

The fact that  ${\mathcal T}_{\mathcal X}(-\log \mathbf \Delta)^{\otimes N}$ has a subsheaf of positive $\alpha$-slope implies by 
\cite[Cor.~3.26]{CMZslope_stability}
that ${\mathcal T}_{\mathcal X}(-\log \mathbf \Delta)$ has a subsheaf of positive $\alpha$-slope.  Define  $\mathcal F_{\mathbf \Delta}\subseteq {\mathcal T}_{\mathcal X}(-\log \mathbf \Delta)$ to be the maximal $\alpha$-destabilizing subsheaf 
\cite[Prop.~3.7]{CMZslope_stability}.
The following generalizes \cite[Lem.~6]{Schnell17Epi}:

\begin{lem}

\label{L:SonCP-L6}
The sheaf $ \mathcal F_{\mathbf \Delta}$ is a saturated  $\alpha$-semistable subsheaf of ${\mathcal T}_{\mathcal X}(-\log \mathbf \Delta)$, of positive $\alpha$-slope. Every non-zero 
subsheaf of ${\mathcal T}_{\mathcal X}(-\log \mathbf \Delta)/\mathcal F_{\mathbf \Delta}$  has $\alpha$-slope less than $\mu_\alpha(\mathcal F_{\mathbf \Delta})$. 
\end{lem}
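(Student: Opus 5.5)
This is essentially a formal consequence of the defining properties of the maximal $\alpha$-destabilizing subsheaf in \cite[Prop.~3.7]{CMZslope_stability}, following \cite[Lem.~6]{Schnell17Epi}. Set $\mathcal E:={\mathcal T}_{\mathcal X}(-\log \mathbf \Delta)$, a locally free sheaf on $\mathcal X$. By construction, $\mathcal F_{\mathbf \Delta}$ is the maximal $\alpha$-destabilizing subsheaf of $\mathcal E$. Thus $\mu_\alpha(\mathcal F_{\mathbf \Delta})=\mu^{\max}_\alpha(\mathcal E)$, and $\mathcal F_{\mathbf \Delta}$ contains every subsheaf $\mathcal G\subseteq\mathcal E$ with $\mu_\alpha(\mathcal G)=\mu^{\max}_\alpha(\mathcal E)$.

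\emph{Positivity.} We showed just before the statement, via \cite[Cor.~3.26]{CMZslope_stability}, that $\mathcal E$ has a subsheaf of positive $\alpha$-slope. Hence $\mu_\alpha(\mathcal F_{\mathbf \Delta})=\mu_\alpha^{\max}(\mathcal E)>0$.

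\emph{Semistability.} Any nonzero $\mathcal G\subseteq \mathcal F_{\mathbf \Delta}$ is also a subsheaf of $\mathcal E$, so $\mu_\alpha(\mathcal G)\le \mu_\alpha^{\max}(\mathcal E)=\mu_\alpha(\mathcal F_{\mathbf \Delta})$.

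\emph{Saturation.} Let $\mathcal F^{\operatorname{sat}}$ be the saturation of $\mathcal F_{\mathbf \Delta}$ in $\mathcal E$. The quotient $\mathcal F^{\operatorname{sat}}/\mathcal F_{\mathbf \Delta}$ is torsion and is a quotient of a torsion-free sheaf. By \cite[Lem.~1.1]{CMZslope_stability}, $\det(\mathcal F^{\operatorname{sat}})=\det(\mathcal F_{\mathbf \Delta})\otimes\mathcal O(D)$ with $D$ effective. Since $\alpha$ is movable, $D\cdot\alpha\ge 0$, and the ranks agree, so $\mu_\alpha(\mathcal F^{\operatorname{sat}})\ge\mu_\alpha(\mathcal F_{\mathbf \Delta})=\mu^{\max}_\alpha(\mathcal E)$. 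Equality must then hold, and maximality gives $\mathcal F^{\operatorname{sat}}\subseteq\mathcal F_{\mathbf \Delta}$, i.e.\ $\mathcal F_{\mathbf \Delta}$ is saturated.

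\emph{Quotient.} Let $0\ne\mathcal G\subseteq \mathcal E/\mathcal F_{\mathbf \Delta}$, and let $\mathcal G'\subseteq\mathcal E$ be its preimage, which strictly contains $\mathcal F_{\mathbf \Delta}$. The sequence $0\to\mathcal F_{\mathbf \Delta}\to\mathcal G'\to\mathcal G\to0$ consists of torsion-free sheaves, since $\mathcal E/\mathcal F_{\mathbf \Delta}$ is torsion-free by saturation. Hence $c_1$ and rank are additive, and $\mu_\alpha(\mathcal G')$ is the rank-weighted average of $\mu_\alpha(\mathcal F_{\mathbf \Delta})$ and $\mu_\alpha(\mathcal G)$. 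Suppose $\mu_\alpha(\mathcal G)\ge\mu_\alpha(\mathcal F_{\mathbf \Delta})$. Then $\mu_\alpha(\mathcal G')\ge\mu_\alpha^{\max}(\mathcal E)$, so $\mu_\alpha(\mathcal G')=\mu_\alpha^{\max}(\mathcal E)$, and maximality forces $\mathcal G'\subseteq\mathcal F_{\mathbf \Delta}$. This contradicts $\mathcal G\neq0$. Hence $\mu_\alpha(\mathcal G)<\mu_\alpha(\mathcal F_{\mathbf \Delta})$.

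The only step needing care is making sure the determinant and additivity facts used above hold on DM stacks. These are the ones recorded in \cite{CMZslope_stability}, and everything else is purely formal. If \cite[Prop.~3.7]{CMZslope_stability} already lists saturation and semistability among the properties of the maximal destabilizing subsheaf, those two parts can simply be cited.
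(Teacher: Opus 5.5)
Your proposal is correct and takes essentially the same approach as the paper. The paper's proof consists only of the remark that the lemma is clear from the definitions, citing \cite[Prop.~3.7 and Lem.~3.1]{CMZslope_stability}; you spell out the formal arguments behind that citation, namely that saturating does not decrease the $\alpha$-slope, and the additivity argument for the quotient bound, together with the positivity coming from \cite[Cor.~3.26]{CMZslope_stability}.
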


\begin{proof}
This is clear from the definitions 
(see \cite[Prop.~3.7 and Lem.~3.1]{CMZslope_stability}).
\end{proof}

Recall that we have an inclusion ${\mathcal T}_{\mathcal X}(-\log \mathbf \Delta)\subseteq \mathcal T_{\mathcal X}$ obtained by dualizing the natural inclusion $\Omega^1_{\mathcal X}\hookrightarrow \Omega^1_{\mathcal X}(\log \mathsf \Delta)$. We define another coherent subsheaf $\mathcal F\subseteq \mathcal T_{\mathcal X}$
as the saturation of $\mathcal F_{\mathbf \Delta}$ in $\mathcal T_{\mathcal X}$; then $\mathcal T_{\mathcal X}/\mathcal F$ is torsion-free, and
\begin{equation}\label{E:SonCP-eq3.2}
\mathcal F\cap \mathcal T_{\mathcal X}(-\log \mathbf \Delta)=\mathcal F_{\mathbf \Delta}.
\end{equation}
We will see in a moment that $\mathcal F$ is actually a (possibly singular) foliation on $\mathcal X$. Recall that, in
general, a foliation on a smooth DM stack  is a saturated subsheaf $\mathcal F \subseteq \mathcal T_{\mathcal X}$ that is closed
under the Lie bracket on $\mathcal T_{\mathcal X}$. 

To show that $\mathcal F$ is a foliation, consider that from the Lie bracket, one constructs an $\mathcal O_{\mathcal X}$-linear mapping
$$
N:\mathcal F\hat\otimes \mathcal F\to \mathcal T_{\mathcal X}/\mathcal F
$$
called the O'Neil tensor of $\mathcal F$; evidently, $\mathcal F$ is a foliation if and only if its O'Neil tensor vanishes. The following generalizes \cite[Lem.~7]{Schnell17Epi}: 

\begin{lem}

\label{L:SonCP-L7}
The O'Neil tensor $N:\mathcal F\hat \otimes \mathcal F\to \mathcal F_{\mathcal X}/\mathcal F$ 
vanishes, and $\mathcal F$ is therefore a foliation on $\mathcal X$.
\end{lem}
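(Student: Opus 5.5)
The plan is to follow Schnell's argument for \cite[Lem.~7]{Schnell17Epi}: show that the O'Neil tensor is a morphism of sheaves whose source is ``too positive'' compared with its target, in the sense of $\alpha$-slopes, so that it must vanish. First I would check that $N$ is $\mathcal O_{\mathcal X}$-bilinear and alternating. Since the Lie bracket on $\mathcal T_{\mathcal X}$ is defined étale locally, this reduces to the standard identity $[fv,w]=f[v,w]-w(f)v$ on an étale atlas. Because $v\in\mathcal F$, the correction term dies in $\mathcal T_{\mathcal X}/\mathcal F$. The maps glue because they are canonical. So $N$ is a morphism of coherent sheaves on $\mathcal X$, which we may take out of $\mathcal F\hat\otimes\mathcal F$ (the reflexive hull), or out of $\wedge^2$, since the target is torsion-free.

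Next I would restrict $N$ to $\mathcal F_{\mathbf\Delta}\otimes\mathcal F_{\mathbf\Delta}$. Since $\mathcal F_{\mathbf\Delta}\subseteq\mathcal F$ agree generically by \eqref{E:SonCP-eq3.2}, and the target $\mathcal T_{\mathcal X}/\mathcal F$ is torsion-free, it suffices to show that this restriction vanishes. Logarithmic vector fields are closed under bracket, as one checks étale locally. Hence the bracket of two sections of $\mathcal F_{\mathbf\Delta}$ lands in $\mathcal T_{\mathcal X}(-\log\mathbf\Delta)$. So the restriction factors through a map
\[
\mathcal F_{\mathbf\Delta}\hat\otimes\mathcal F_{\mathbf\Delta}\to \mathcal T_{\mathcal X}(-\log\mathbf\Delta)/\mathcal F_{\mathbf\Delta},
\]
and this map is injective into $\mathcal T_{\mathcal X}/\mathcal F$ by \eqref{E:SonCP-eq3.2}. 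It therefore suffices to show that this log O'Neil tensor vanishes.

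The slope comparison is the key step. By \Cref{L:SonCP-L6}, $\mathcal F_{\mathbf\Delta}$ is $\alpha$-semistable of slope $\mu:=\mu_\alpha(\mathcal F_{\mathbf\Delta})>0$. I would then apply the stacky tensor-product theorem for semistability with respect to movable classes, \cite[Thm.~3.23]{CMZslope_stability} (the stack analogue of Campana--P\u aun's result). It gives that $\mathcal F_{\mathbf\Delta}\hat\otimes\mathcal F_{\mathbf\Delta}$ is $\alpha$-semistable of slope $2\mu$. In particular $\mu^{\min}_\alpha(\mathcal F_{\mathbf\Delta}\hat\otimes\mathcal F_{\mathbf\Delta})=2\mu$. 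If the map were nonzero, its image would be a nonzero torsion-free quotient of the source and a nonzero subsheaf of $\mathcal T_{\mathcal X}(-\log\mathbf\Delta)/\mathcal F_{\mathbf\Delta}$. It would then have slope at least $2\mu>\mu$. But by \Cref{L:SonCP-L6}, every nonzero subsheaf of the quotient has slope less than $\mu$, which is a contradiction. Hence $N|_{\mathcal F_{\mathbf\Delta}}=0$, so $N=0$, and $\mathcal F$ is saturated and closed under the bracket, i.e.\ a foliation.

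I expect the main obstacle to be justifying the tensor-product semistability statement on DM stacks with respect to a movable class. I would take it from \cite{CMZslope_stability} (Thm.~3.23, together with the max/min slope formalism of Lem.~3.10). A weaker but sufficient fallback is the inequality $\mu^{\min}_\alpha(\mathcal A\hat\otimes\mathcal B)\ge\mu^{\min}_\alpha(\mathcal A)+\mu^{\min}_\alpha(\mathcal B)$, which would follow from the max-slope statement used in the proof of \Cref{bound} by dualizing. A minor point is that slopes are unaffected by the reflexive hull, since the two sheaves agree away from codimension two.
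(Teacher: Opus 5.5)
Your proposal is correct and follows essentially the same route as the paper. You pass to the log O'Neil tensor on $\mathcal F_{\mathbf\Delta}\hat\otimes\mathcal F_{\mathbf\Delta}$ and kill it with the slope comparison $2\mu_\alpha(\mathcal F_{\mathbf\Delta})>\mu_\alpha(\mathcal F_{\mathbf\Delta})$, using semistability of the tensor product (the paper cites \cite[Cor.~3.25]{CMZslope_stability}) together with \Cref{L:SonCP-L6}. You then deduce $N=0$ from the injectivity given by \eqref{E:SonCP-eq3.2}, the torsion cokernel of $\mathcal F_{\mathbf\Delta}\hat\otimes\mathcal F_{\mathbf\Delta}\to\mathcal F\hat\otimes\mathcal F$, and the torsion-freeness of $\mathcal T_{\mathcal X}/\mathcal F$.

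One small slip: a map into the torsion-free but not necessarily reflexive sheaf $\mathcal T_{\mathcal X}/\mathcal F$ need not extend to the reflexive hull of $\mathcal F\otimes\mathcal F$. You should therefore take $\hat\otimes$ to be the tensor product modulo torsion, as the paper does; this changes nothing in the slope argument.
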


\begin{proof}
The proof is identical to that of \cite[Lem.~7]{Schnell17Epi}; we include the details here for completeness.
The Lie bracket of two sections of $\mathcal T_{\mathcal X}(-\log \mathbf \Delta)$ is a section of $\mathcal T_{\mathcal X}(-\log \mathbf \Delta)$ (this can be checked locally, and so follows from the case of varieties), and so we get a
log O'Neil tensor
$$
N_{\mathbf\Delta}:\mathcal F_{\mathbf \Delta}\hat\otimes \mathcal F_{\mathbf \Delta}\longrightarrow \mathcal T_{\mathcal X}(-\log \mathbf \Delta)/\mathcal F_{\mathbf \Delta}.
$$
The key point is that $N_{\mathbf \Delta}  = 0$. Indeed, by 
\cite[Cor.~3.25]{CMZslope_stability}, 
the tensor product modulo torsion, $\mathcal F_{\mathbf \Delta}\hat \otimes \mathcal F_{\mathbf \Delta}$,  
is again $\alpha$-semistable of slope
$$
\mu_\alpha(\mathcal F_{\mathbf \Delta}\hat \otimes \mathcal F_{\mathbf \Delta})=2\cdot \mu_\alpha(\mathcal F_{\mathbf \Delta})>\mu_\alpha(\mathcal F_{\mathbf \Delta}),
$$
which is strictly greater than the slope of the image of $N_{\mathbf \Delta}$, if  the image  is nonzero  
(we have  by \Cref{L:SonCP-L6} that $\mu_\alpha(\mathcal F_{\mathbf \Delta})$ is greater than the slope of any nonzero subsheaf of $\mathcal T_{\mathcal X}(-\log \mathbf \Delta)/\mathcal F_{\mathbf \Delta}$).
This inequality among slopes implies that $N_{\mathbf \Delta} = 0$ by  
\cite[Prop.~3.2]{CMZslope_stability}.

The O'Neil tensor $N$ and the log O'Neil tensor $N_{\mathbf \Delta}$ are both induced by the Lie bracket
on $\mathcal T_{\mathcal X}$, and so we have the following commutative diagram:
$$
\xymatrix@C=3em{
\mathcal F_{\mathbf \Delta}\hat \otimes \mathcal F_{\mathbf \Delta} \ar[r]^<>(0.5){N_{\mathbf \Delta}} \ar[d]& \mathcal T_{\mathcal X}(-\log \mathbf \Delta)/\mathcal F_{\mathbf \Delta} \ar@{^(->}[d]\\
\mathcal F\hat \otimes \mathcal F \ar[r]^N& \mathcal T_{\mathcal X}/\mathcal F\\
}
$$
The vertical arrow on the right is injective by \eqref{E:SonCP-eq3.2}. Now $N_{\mathbf \Delta}=0$ implies that $N$ factors through
the cokernel of the vertical arrow on the left; but the cokernel is a torsion sheaf, whereas $\mathcal T_{\mathcal X}/\mathcal F$ is
torsion-free. The conclusion is that $N = 0$. 
\end{proof}

We now take a moment to investigate the quotient $\mathcal F/\mathcal F_{\mathbf \Delta}$, which we will use in several computations that follow.  
It is easy to see from \eqref{E:SonCP-eq3.2} that we have an inclusion of sheaves
\begin{equation}\label{E:F/FDeltaInc}
\mathcal F/\mathcal F_{\mathbf \Delta}\hookrightarrow \mathcal T_{\mathcal X}/\mathcal T_{\mathcal X}(-\log \mathbf \Delta).
\end{equation}
The sheaf on the right-hand side is a torsion sheaf supported on the divisor $\mathbf \Delta$, and a brief computation (which can be reduced to the case of smooth varieties, and is established in the proof of \cite[Lem.~12]{Schnell17Epi}) shows that
\begin{equation}\label{E:SonCP-N1}
\mathcal T_{\mathcal X}/\mathcal T_{\mathcal X}(-\log \mathbf \Delta) \cong \bigoplus _{\mathbf D\subseteq \mathbf \Delta} \iota_{\mathbf D,*}\mathcal N_{\mathbf D/\mathcal X}
\end{equation}
is isomorphic to the direct sum of the normal bundles of the irreducible components $\mathbf D$ of $\mathbf \Delta$; we denote by $\iota_{\mathbf D}:\mathbf D\hookrightarrow \mathcal X$ the inclusion. 
We have the usual identification $\mathcal N_{\mathbf D/\mathcal X}\cong \mathcal O_{\mathbf D}(\mathbf D)$, 
and, from the short exact sequence 
$0\to \mathcal O_{\mathcal X}\to \mathcal O_{\mathcal X}(\mathbf D)\to \iota_{\mathbf D,*}\mathcal O_{\mathbf D}(\mathbf D)\to 0$, 
we have that  $\iota_{\mathbf D,*}\mathcal O_{\mathbf D}(\mathbf D)$ is the quotient of a torsion-free coherent sheaf on $\mathcal X$, and $c_1(\iota_{\mathbf D,*}\mathcal O_{\mathbf D}(\mathbf D))\cong \mathcal O_{\mathcal X}(\mathbf D)$. 

From the inclusion \eqref{E:F/FDeltaInc} and the isomorphism \eqref{E:SonCP-N1}, we can conclude that the rank
of $\mathcal  F /\mathcal F_{\mathbf \Delta}$  at the generic point of $\mathbf D$ is either $0$ or $1$, and so 
\begin{equation}\label{E:SonCP-c1-1}
c_1(\mathcal F/\mathcal F_{\mathbf \Delta}) =c_1 \left(\mathcal O_{\mathcal X}(\sum_{\mathbf D\subseteq \mathbf \Delta}a_{\mathbf D}\mathbf D)\right)
\end{equation}
where $a_{\mathbf D} = 0$ if $\mathcal F= \mathcal F_{\mathbf \Delta}$ at the generic point of $\mathbf D$, and $a_{\mathbf D} = 1$, otherwise.  Note that we have a well-defined first Chern class of the torsion sheaf  $\mathcal F/\mathcal F_\Delta$, since by construction it is a quotient of the torsion-free sheaf  $\mathcal F$. 

From this we can show the following:
\begin{lem}\label{L:Fpos_mu}
Every quotient sheaf of $\mathcal F$ has positive $\alpha$-slope. 
\end{lem}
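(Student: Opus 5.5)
Let $\mathcal F\twoheadrightarrow \mathcal G$ be a nonzero quotient; we must show $\mu_\alpha(\mathcal G)>0$. We may replace $\mathcal G$ by its torsion-free quotient $\mathcal G/\mathcal G_{\operatorname{tor}}$ whenever $\mathcal G$ has positive rank, since $c_1$ of the torsion part (a quotient of a torsion-free sheaf's kernel-type situation) is effective and only increases the slope. If $\mathcal G$ has rank zero, its $\alpha$-slope is $+\infty$ by the usual convention, so there is nothing to prove. So assume $\mathcal G$ is torsion-free of positive rank.

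The plan is to compare with the image of $\mathcal F_{\mathbf \Delta}$. Let $\mathcal G_{\mathbf \Delta}\subseteq \mathcal G$ be the image of $\mathcal F_{\mathbf \Delta}\to \mathcal F\to \mathcal G$. Since $\mathcal F/\mathcal F_{\mathbf \Delta}$ is torsion, the cokernel $\mathcal G/\mathcal G_{\mathbf \Delta}$ is a quotient of $\mathcal F/\mathcal F_{\mathbf \Delta}$, hence torsion. Thus $\mathcal G_{\mathbf \Delta}$ is a nonzero quotient of $\mathcal F_{\mathbf \Delta}$ of the same rank as $\mathcal G$. By \Cref{L:SonCP-L6}, $\mathcal F_{\mathbf \Delta}$ is $\alpha$-semistable of positive slope, so $\mu_\alpha(\mathcal G_{\mathbf \Delta})\ge \mu_\alpha(\mathcal F_{\mathbf \Delta})>0$ (quotients of a semistable sheaf have slope at least its slope; here I would invoke the standard property from \cite[Lem.~3.1/Prop.~3.2]{CMZslope_stability}, after passing to the torsion-free quotient of $\mathcal G_{\mathbf\Delta}$, which only increases slope).

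It remains to compare $c_1(\mathcal G)$ with $c_1(\mathcal G_{\mathbf \Delta})$. From the exact sequence $0\to \mathcal G_{\mathbf\Delta}\to \mathcal G\to \mathcal G/\mathcal G_{\mathbf \Delta}\to 0$ and additivity of determinants (valid since $\mathcal G/\mathcal G_{\mathbf\Delta}$ is a torsion quotient of the torsion-free sheaf $\mathcal G$), $c_1(\mathcal G)=c_1(\mathcal G_{\mathbf\Delta})+c_1(\mathcal G/\mathcal G_{\mathbf\Delta})$, and by \cite[Lem.~1.1]{CMZpositivity} the torsion term is effective. Since $\alpha$ is movable, it pairs nonnegatively with effective divisors, so $\mu_\alpha(\mathcal G)\ge \mu_\alpha(\mathcal G_{\mathbf\Delta})>0$. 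Note that this shows we never actually need the explicit formula \eqref{E:SonCP-c1-1}; alternatively one can bound $c_1(\mathcal G/\mathcal G_{\mathbf\Delta})$ as a quotient of $\mathcal F/\mathcal F_{\mathbf\Delta}$, whose $c_1$ is the effective divisor $\sum a_{\mathbf D}\mathbf D$.

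The main obstacle is bookkeeping of first Chern classes for torsion sheaves on stacks: one must ensure every torsion sheaf appearing is a quotient of a torsion-free sheaf (true here, as $\mathcal G/\mathcal G_{\mathbf\Delta}$ is a quotient of torsion-free $\mathcal G$, and the torsion of the original quotient of $\mathcal F$ is handled by first reducing as above or by noting its rank is zero so that the slope computation is unaffected after saturation), so that determinants are well defined and effective.
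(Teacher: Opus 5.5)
Your argument is correct and is essentially the paper's proof. You take the image $\mathcal G'$ of $\mathcal F_{\mathbf\Delta}$ in $\mathcal G$, use $\alpha$-semistability of $\mathcal F_{\mathbf\Delta}$ to get $\mu_\alpha(\mathcal G')>0$, and use effectivity of $c_1$ of the torsion quotient $\mathcal G/\mathcal G'$ to get $\mu_\alpha(\mathcal G)\ge\mu_\alpha(\mathcal G')$. The only difference is your preliminary reduction to torsion-free $\mathcal G$ (whose justification is muddled); the paper avoids it by noting that $\mathcal G$ and $\mathcal G/\mathcal G'$ are already quotients of the torsion-free sheaf $\mathcal F$, so all the determinants are defined directly and you can drop that step.
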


\begin{proof}
Let $\mathcal G$ be a quotient sheaf of $\mathcal F$; i.e., we have a surjection $\mathcal F\twoheadrightarrow \mathcal G$.  Consider the following commutative diagram of short exact sequences
$$
\xymatrix{
0  \ar[r]& \mathcal F_{\mathbf \Delta} \ar[r]\ar@{->>}[d]& \mathcal F \ar[r]\ar@{->>}[d]& \mathcal F/\mathcal F_{\mathbf \Delta}\ar[r]\ar@{->>}[d]& 0\\
0 \ar[r]&\mathcal G'\ar[r]& \mathcal G \ar[r]& \mathcal G/\mathcal G'\ar[r]& 0
}
$$
where $\mathcal G'$ is defined to be the image of $\mathcal F_\Delta$ in $\mathcal G$. As $\mathcal F_{\mathbf \Delta}$ 
is $\alpha$-semistable with $\mu_\alpha (\mathcal F ) >0$, it follows that every quotient sheaf of $\mathcal F_{\mathbf \Delta}$ has positive $\alpha$-slope.  In particular, $\mu_\alpha (\mathcal G')>0$.  As $\mathcal F/\mathcal F_{\mathbf \Delta}$ is torsion, it follows that $\mathcal G/\mathcal G'$ is, as well; note that both are quotients of the torsion-free sheaf $\mathcal F$.  Since, for torsion sheaves that are quotients of torsion-free sheaves,  the first Chern class is effective, we have that $\mu_\alpha (\mathcal G)\ge \mu_{\alpha}(\mathcal G')>0$, completing the proof. 
\end{proof}

It follows from \Cref{L:Fpos_mu} and  \Cref{T:SonCP-T8} that
the foliation $\mathcal F$ is algebraic (\Cref{D:AlgFol}). In other words, there exists a dominant rational mapping
$$
p:\mathcal X\dashrightarrow Z
$$
to a smooth projective variety $Z$, and using the abuse of notation introduced in  \Cref{R:AlgFol}, we say 
$$
\mathcal F = \ker \left(Tp : {\mathcal T}_{\mathcal X} \to p^*{\mathcal T}_Z\right)
$$
generically, and we write $\mathcal F={\mathcal T}_{\mathcal X/Z}$.
More precisely, 
  the rational map $p$ can be taken to be a morphism on an open substack $\mathcal U\subseteq \mathcal X$ with complement of codimension $\ge 2$, and we have that $\mathcal F|_{\mathcal U}= \ker \left(Tp|_{\mathcal U} : {\mathcal T}_{\mathcal X}|_{\mathcal U} \to p|_{\mathcal U}^*{\mathcal T}_Z\right)$.
Moreover, we recall that $\mathcal F$, being a foliation, is reflexive, as 
a saturated subsheaf of a locally free sheaf is reflexive  (e.g., \cite[Prop.~1.1]{Hart80}).

In order to make our inductive argument, we will want to know that 
 $\dim Z\ge 1$; the following generalizes \cite[Lem.~10]{Schnell17Epi}: 
 
 \begin{lem}
 
 \label{L:SonCP-L10}
In the notation above, we have $\dim Z\ge 1$. 
\end{lem}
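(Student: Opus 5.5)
We argue by contradiction: suppose $\dim Z=0$. Then the rational map $p$ is constant, so on the open substack $\mathcal U$ where $p$ is defined we have $\mathcal T_{\mathcal U/Z}=\mathcal T_{\mathcal U}$. By \Cref{R:AlgFolRefin} and \Cref{R:ExtSatUnique}, the saturated subsheaves $\mathcal F$ and $\mathcal T_{\mathcal X}$ agree generically, hence $\mathcal F=\mathcal T_{\mathcal X}$. Our aim is to show that this forces $c_1(\mathcal Q)$ to be pseudo-effective, contradicting the standing assumption of \S\ref{S:SContra}.

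The key step is to control $\mathcal F_{\mathbf \Delta}$. By \eqref{E:SonCP-eq3.2}, $\mathcal F_{\mathbf \Delta}=\mathcal F\cap \mathcal T_{\mathcal X}(-\log \mathbf\Delta)=\mathcal T_{\mathcal X}(-\log\mathbf \Delta)$. So the maximal $\alpha$-destabilizing subsheaf is everything, and $\mathcal T_{\mathcal X}(-\log \mathbf \Delta)$ is $\alpha$-semistable of positive slope. Dually, $\Omega^1_{\mathcal X}(\log\mathbf\Delta)$ is $\alpha$-semistable of negative slope, since its slope is $-\mu_\alpha(\mathcal F_{\mathbf\Delta})<0$ by \cite[Lem.~3.10]{CMZslope_stability}.

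Next we use the big subsheaf. By \Cref{L:SonCP-L3}, some tensor power $\Omega^1_{\mathcal X}(\log \mathbf \Delta)^{\otimes M}$ contains a big line bundle $\mathcal L$. Since $\alpha$ is movable and $\mathcal L$ is big, we have $c_1(\mathcal L)\cdot\alpha\ge 0$; a big class is pseudo-effective and pairs nonnegatively with movable classes by the duality underlying \cite[Def.~2.1]{CMZslope_stability}. On the other hand, by \cite[Cor.~3.25]{CMZslope_stability} the tensor power (modulo torsion) is $\alpha$-semistable of slope $M\mu_\alpha(\Omega^1_{\mathcal X}(\log\mathbf\Delta))<0$. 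Hence $\mu^{\max}_\alpha<0$, so every subsheaf has negative slope, and in particular $c_1(\mathcal L)\cdot\alpha<0$. This is a contradiction, so $\dim Z\ge 1$.

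The main obstacle is the inequality $c_1(\mathcal L)\cdot\alpha\ge0$ for a big $\mathcal L$ and a movable $\alpha$ on a DM stack. I would reduce it to the coarse space $X$: some power of $\mathcal L$ descends to a big line bundle there, hence to an effective divisor plus an ample divisor, and movable classes pair nonnegatively with effective divisors by definition. One also has to check that semistability of a sheaf passes to its tensor powers in the form needed, which is what \cite[Cor.~3.25]{CMZslope_stability} and \cite[Thm.~3.27]{CMZslope_stability} provide.
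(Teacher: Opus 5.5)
Your proof is correct and follows the paper's argument essentially step for step. Assuming $\dim Z=0$ forces $\mathcal F=\mathcal T_{\mathcal X}$, hence $\mathcal F_{\mathbf \Delta}=\mathcal T_{\mathcal X}(-\log \mathbf \Delta)$; every tensor power of $\Omega^1_{\mathcal X}(\log \mathbf \Delta)$ is then $\alpha$-semistable of negative slope (for semistability of the dual the paper cites \cite[Cor.~3.11]{CMZslope_stability}, which you use implicitly alongside Lem.~3.10), and this contradicts the nonnegative $\alpha$-slope of the big subsheaf. Your opening "aim" of showing $c_1(\mathcal Q)$ is pseudo-effective is not what you actually prove, but the contradiction you do reach, with the big-subsheaf hypothesis (going through \Cref{L:SonCP-L3}, which is harmless), is exactly the paper's.
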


\begin{proof} The proof is essentially identical to \cite[Lem.~10]{Schnell17Epi}, but we include the details for convenience.
 If $\dim Z = 0$, then $\mathcal F= \mathcal T_{\mathcal X}$  and $\mathcal F_{\mathbf \Delta} = \mathcal T_{\mathcal X}(-\log \mathbf \Delta)$, so that,  consequently, the logarithmic tangent bundle $\mathcal T_{\mathcal X}(-\log \mathbf \Delta)$ is $\alpha$-semistable of positive slope (\Cref{L:SonCP-L6}). Since the dual 
 (\cite[Cor.~3.11]{CMZslope_stability}) 
 and tensor product 
  (\cite[Thm.~3.23]{CMZslope_stability}) 
 of $\alpha$-semistable sheaves remains $\alpha$-semistable, this means that any tensor power
of $\Omega^1_{\mathcal X}(\log \mathbf \Delta)$
 is $\alpha$-semistable of negative slope. But that contradicts the hypothesis of \Cref{T:SonCP-T4},
namely that some tensor power of  $\Omega^1_{\mathcal X}(\log \mathbf \Delta)$ contains a subsheaf with big determinant, because the
$\alpha$-slope of such a subsheaf is obviously non-negative.  
\end{proof}

We consider a  resolution of the rational map $p$ as in \Cref{P:KT23-4.4} and the proof of \Cref{T:CKT21-7.1} (see diagram \eqref{E:CKT21-7.1diag})
\begin{equation}\label{E:res-for-contra}
\xymatrix{
&\overline  {\mathcal X} \ar[rd]^{ \overline{p}} \ar[ld]_b&\\
\mathcal X\ar@{-->}[rr]^p&&Z.
}
\end{equation}
Note that  $\overline{\mathcal X}$ is a smooth proper integral DM stack over $\mathbb C$ with projective coarse moduli space, and $b:\overline {\mathcal X}\to \mathcal X$ is birational.
 After further blow-ups 
 (e.g.,  \cite[Thm.~1.3]{CMZslope_stability}),  
 we may assume also that both $K_{\overline {\mathcal X}/\mathcal X}$ and $b^*\mathbf \Delta$ are normal crossings divisors.  By the constructions in \Cref{P:KT23-4.4} and  the  proof of \Cref{T:CKT21-7.1}, we may assume $b$ is an isomorphism over the open subset where $p$ is already a morphism.

\subsection{Establishing the contradiction}\label{S:Contradiction}

Let
$\bar {\mathbf \Delta}$  be the reduced normal crossing divisor whose support is equal to the pre-image of $\mathbf \Delta$  in $\overline {\mathcal X}$.  
Recall  from \eqref{E:D:DeltaHor}  that the horizontal part $\bar{\mathbf \Delta}^{hor}\subseteq \bar{\mathbf \Delta}$ is the union of all irreducible components of $\bar{\mathbf \Delta}$  that map
onto $Z$; evidently, $\bar{\mathbf \Delta}^{hor}$  is again a reduced divisor on $\overline{\mathcal X}$ with at worst normal crossing singularities.
Let $\mathbf R(\overline{p})$ denote the ramification divisor of the morphism $\overline{p}: \overline{\mathcal X} \to Z$ (see \eqref{E:c1TX/Z}).
We also have $\mathbf \Delta^{hor}$ and $\mathbf R(p)$ on $\mathcal X$; see \Cref{R:TXZR(p)DeltaHor}. 
Note also that the numerical pull-back    $$\bar \alpha :=b^*\alpha,$$  of the movable class $\alpha$ on $\mathcal X$,  is a  movable class on $\overline {\mathcal X}$ 
 (\cite[Lem.~2.5(2)]{CMZslope_stability}).

\begin{lem}\label{L:Spock-not-Spock}
In the notation above:
\begin{enumerate}[label=(\alph*)]
\item \label{E:L:Spock}

If $K_{\overline{\mathcal X}/Z}+\bar{\mathbf \Delta}^{hor}$ is pseudo-effective, then $K_{\mathcal X/Z}+\mathbf \Delta^{hor}-\mathbf R(p)$ is pseudo-effective.

\item \label{E:L:not-Spock}
If  $(K_{\overline{\mathcal X}/Z}+\bar{\mathbf \Delta}^{hor}-\mathbf R(\overline{p}))\cdot \overline{\alpha}<0$, then $K_{\mathcal X/Z}+\mathbf \Delta^{hor}-\mathbf R(p)$ is not pseudo-effective. 
\end{enumerate}
\end{lem}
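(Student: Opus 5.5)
Both parts rest on a single comparison formula on $\overline{\mathcal X}$ relating the divisor $K_{\overline{\mathcal X}/Z}+\bar{\mathbf \Delta}^{hor}-\mathbf R(\overline p)$ with the pullback $b^*(K_{\mathcal X/Z}+\mathbf \Delta^{hor}-\mathbf R(p))$. Since $b$ is an isomorphism over the open substack $\mathcal U\subseteq \mathcal X$ where $p$ is a morphism, and $\mathcal X-\mathcal U$ has codimension $\ge 2$ (\Cref{R:KT23-4.4-Yprop}), the divisors $K_{\overline{\mathcal X}/Z}$, $\bar{\mathbf\Delta}^{hor}$ and $\mathbf R(\overline p)$ agree with the pullbacks of $K_{\mathcal X/Z}$, $\mathbf\Delta^{hor}$ and $\mathbf R(p)$ on $b^{-1}(\mathcal U)$. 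Here we use \Cref{R:TXZR(p)DeltaHor}: the divisors on $\mathcal X$ are defined by restricting to $\mathcal U$ and extending. Hence
$$
K_{\overline{\mathcal X}/Z}+\bar{\mathbf \Delta}^{hor}-\mathbf R(\overline{p}) = b^*\big(K_{\mathcal X/Z}+\mathbf \Delta^{hor}-\mathbf R(p)\big)+\mathcal E
$$
for some $b$-exceptional $\mathbb Q$-divisor $\mathcal E$, possibly with components of both signs. The issue is that $p$ need not be essentially equidimensional on $\mathcal X$, so \Cref{T:CKT21-7.1} does not apply directly. My plan is to handle the two directions separately.

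\textbf{Part (a).} The plan is to apply the construction in the proof of \Cref{T:CKT21-7.1}, that is, diagram \eqref{E:CKT21-7.1diag} with \eqref{E:CKT2178}, to the rational map $p:\mathcal X\dashrightarrow Z$, using $\overline{\mathcal X}$ as the resolution $b$. That argument only uses that $p$ is defined on $\mathcal U$, equidimensional there after shrinking by a codimension-$2$ set, with connected fibres. Equidimensionality on a big open set is automatic for the fibration induced by the algebraic foliation, after removing the codimension-$\ge 2$ locus where fibres jump. One gets $\widetilde{\mathcal X}\to\widetilde Z$ with
$$
K_{\widetilde {\mathcal X}/\widetilde Z} + \widetilde {\mathbf \Delta}^{h}+\mathcal{E}''\sim_{\mathbb Q} (b\circ a)^*\big(K_{\mathcal X/Z}+\mathbf \Delta^{hor}-\mathbf R(p)\big).
$$
Pseudo-effectivity of $K_{\overline{\mathcal X}/Z}+\bar{\mathbf\Delta}^{hor}$ implies pseudo-effectivity on the general fibre. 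Restricted to a general fibre of $\tilde p$, $K_{\widetilde{\mathcal X}/\widetilde Z}+\widetilde{\mathbf\Delta}^h$ is the log canonical class of a fibre birational and étale-locally comparable to a general fibre of $\overline p$, so it is pseudo-effective. Then \Cref{T:CKT21-7.3} gives global pseudo-effectivity, and \cite[Cor.~3.19]{CMZslope_stability} pushes this down across the exceptional divisor $\mathcal E''$ to $\mathcal X$.

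\textbf{Part (b).} Argue by contraposition. Suppose $D:=K_{\mathcal X/Z}+\mathbf \Delta^{hor}-\mathbf R(p)$ is pseudo-effective. Then $b^*D$ is pseudo-effective, so $b^*D\cdot\bar\alpha\ge 0$, since $\bar\alpha$ is movable (\cite[Lem.~2.5(2)]{CMZslope_stability}) and movable classes pair nonnegatively with pseudo-effective classes. By the projection formula for the numerical pullback, $\mathcal E\cdot b^*\alpha=b_*\mathcal E\cdot\alpha=0$ because $\mathcal E$ is $b$-exceptional. Therefore $(K_{\overline{\mathcal X}/Z}+\bar{\mathbf \Delta}^{hor}-\mathbf R(\overline{p}))\cdot\bar\alpha=D\cdot\alpha\ge 0$, contradicting the hypothesis.

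\textbf{Main obstacle.} The hard step is part (a). It requires checking that the Claudon--Kebekus--Taji construction really applies to $p$ as a rational map from a stack, in particular that the general-fibre pseudo-effectivity on $\widetilde{\mathcal X}$ follows from that on $\overline{\mathcal X}$. I would handle this by showing that over a dense open set of $Z$ (away from the branch locus of $\beta$) the fibres of $\tilde p$ and $\overline p$ coincide. The exceptional-divisor bookkeeping in (b), using the formula $\mathcal E\cdot b^*\alpha=0$, is routine.
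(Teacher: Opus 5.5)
Your part (b) is the paper's argument. The paper observes that the two divisors agree away from the $b$-exceptional locus and cites \cite[Lem.~3.18(b)]{CMZslope_stability}; that lemma is your identity $\mathcal E\cdot b^*\alpha=0$, giving $(K_{\overline{\mathcal X}/Z}+\bar{\mathbf \Delta}^{hor}-\mathbf R(\overline p))\cdot\bar\alpha=(K_{\mathcal X/Z}+\mathbf \Delta^{hor}-\mathbf R(p))\cdot\alpha$.

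Part (a) is correct but organized differently. The paper does not reopen the proof of \Cref{T:CKT21-7.1}. It applies your opening comparison to $K_{\overline{\mathcal X}/Z}+\bar{\mathbf \Delta}^{hor}$ versus $b^*(K_{\mathcal X/Z}+\mathbf \Delta^{hor})$; these again differ by a $b$-exceptional divisor. Combined with \cite[Cor.~3.19]{CMZslope_stability}, this gives that $K_{\mathcal X/Z}+\mathbf \Delta^{hor}$ is pseudo-effective on $\mathcal X$. The paper then applies \Cref{T:CKT21-7.1} as stated to the essentially equidimensional map $p$.

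Your claim that \Cref{T:CKT21-7.1} ``does not apply directly'' because $p$ might fail to be essentially equidimensional is therefore misplaced. It also contradicts your own assertion in (a), which your rerun of the construction needs anyway. The only missing input for a direct application was pseudo-effectivity on $\mathcal X$, and the same Cor.~3.19 you invoke at the end supplies it.

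Your route does buy something. It uses only pseudo-effectivity of $K_{\overline{\mathcal X}/Z}+\bar{\mathbf \Delta}^{hor}$ on general fibres of $\overline p$, which is what the induction in \Cref{P:SonCP-P13} actually produces. The price is that you must redo the fibre comparison hidden inside \Cref{T:CKT21-7.1}. Over the open set of $Z$ where $\beta$ is \'etale, $a$ identifies general fibres of $\tilde p$ with those of $\overline p$. You must then check that $\widetilde{\mathbf \Delta}^{h}$ restricts there to $\bar{\mathbf \Delta}^{hor}$, which may contain horizontal $b$-exceptional components. The paper's two-line route avoids this.

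Finally, essential equidimensionality of $p$ is not ``automatic'' for an arbitrary birational model of $Z$; it fails, e.g., for $\operatorname{Bl}_q\mathbb P^2\to\mathbb P^2$, where the fibre-dimension jump locus is a divisor. It holds here because of how $Z$ is chosen via the Chow construction in \Cref{T:SonCP-T8}, a point the paper also takes for granted.
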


\begin{proof}
\ref{E:L:Spock} 
The key point is that $K_{\overline{\mathcal X}/Z}+\mathbf \Delta^{hor}$ and $K_{\mathcal X/Z}+\mathbf \Delta^{hor}$ are isomorphic over the complement of the locus contracted by  $b:\overline {\mathcal X}\to \mathcal X$.   It then follows from  \cite[Cor.~3.19]{CMZslope_stability} that if $K_{\overline{\mathcal X}/Z}+\mathbf \Delta^{hor}$ is pseudo-effective, then so is $K_{\mathcal X/Z}+\mathbf \Delta^{hor}$.  Now using that $p$ is essentially equidimensional (note that in general $\overline p$ will not be!), one can invoke \Cref{T:CKT21-7.1}, and conclude that $K_{\mathcal X/Z}+\mathbf \Delta^{hor}-\mathbf R(p)$ is pseudo-effective.

\ref{E:L:not-Spock} 
Again, the key point is that $K_{\overline{\mathcal X}/Z}+\mathbf \Delta^{hor}-\mathbf R(\overline{p})$ and $K_{\mathcal X/Z}+\mathbf \Delta^{hor}-\mathbf R(p)$ are isomorphic over the complement of the locus contracted by  $b:\overline {\mathcal X}\to \mathcal X$.  
Then it follows  from \cite[Lem.~3.18(b)]{CMZslope_stability}   that 
$0>(K_{\overline{\mathcal X}/Z}+\bar{\mathbf \Delta}^{hor}-\mathbf R(\overline{p}))\cdot \overline{\alpha}=(K_{\mathcal X/Z}+\mathbf \Delta^{hor}-\mathbf R(p))\cdot \alpha$.
\end{proof}

To obtain our contradiction, we start with the following lemma:

\begin{lem}\label{L:I-am-Spock}
In the notation above, $K_{\overline{\mathcal X}/Z}+\bar {\mathbf \Delta}^{hor}$ is pseudo-effective.
\end{lem}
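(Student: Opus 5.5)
The plan is to apply \Cref{P:SonCP-P13} to the pair $(\overline{\mathcal X},\bar{\mathbf \Delta})$ and the morphism $\overline p:\overline{\mathcal X}\to Z$. That proposition yields exactly the pseudo-effectivity of $K_{\overline{\mathcal X}/Z}+\bar{\mathbf \Delta}^{hor}$. So it suffices to check its three hypotheses:
\begin{enumerate}[label=(\roman*)]
\item $\overline p$ is a surjective morphism to a smooth projective variety of dimension $\ge 1$;
\item \Cref{T:SonCP-T1} holds in dimension less than $\dim\overline{\mathcal X}=\dim\mathcal X$;
\item some positive tensor power of $\Omega^1_{\overline{\mathcal X}}(\log \bar{\mathbf \Delta})$ contains a subsheaf with big determinant.
\end{enumerate}

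Hypothesis (i) is immediate. $\overline{\mathcal X}$ is a smooth proper integral DM stack with projective coarse moduli space, by \Cref{P:KT23-4.4}. The map $\overline p$ is dominant and proper, hence surjective. Finally, $\dim Z\ge 1$ by \Cref{L:SonCP-L10}. Hypothesis (ii) is the inductive hypothesis set up in \S\ref{S:SContra}, since $b$ is birational and so $\dim\overline{\mathcal X}=\dim\mathcal X$. I would also note that $\bar{\mathbf \Delta}$ is reduced with normal crossings, because $b^*\mathbf\Delta$ was arranged to be normal crossing. The general fibers of $\overline p$ are smooth by generic smoothness, applied on an \'etale atlas or via the coarse space. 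These facts are what \Cref{T:CKT21-7.3}, used inside \Cref{P:SonCP-P13}, requires.

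The main step is hypothesis (iii), transporting the big subsheaf to $\overline{\mathcal X}$. By \Cref{L:SonCP-L3}, there is a big line bundle $\mathcal L\hookrightarrow \Omega^1_{\mathcal X}(\log\mathbf\Delta)^{\otimes M}$ for some $M\ge1$. Pulling back gives a morphism
$$b^*\mathcal L\to b^*\Omega^1_{\mathcal X}(\log\mathbf\Delta)^{\otimes M}\to \Omega^1_{\overline{\mathcal X}}(\log\bar{\mathbf\Delta})^{\otimes M}.$$
The second arrow comes from the natural map $b^*\Omega^1_{\mathcal X}(\log\mathbf\Delta)\to\Omega^1_{\overline{\mathcal X}}(\log\bar{\mathbf\Delta})$. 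This map exists because $b^{-1}(\mathbf\Delta)\subseteq\bar{\mathbf\Delta}$: log forms pull back to log forms. It can be checked \'etale locally, so it reduces to the case of varieties.

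The composite is nonzero, since $b$ is an isomorphism over a dense open substack, and it is injective because $b^*\mathcal L$ is a line bundle. It remains to see that $b^*\mathcal L$ is big. Some power of $\mathcal L$ descends to a big line bundle on $X$, and its pullback to the coarse space $\overline X$ under the birational morphism $\overline X\to X$ is big. Hence $b^*\mathcal L$ is big in the sense of \cite{CMZpositivity}. I expect this bigness-under-birational-pullback statement to be the only point needing a citation, likely among the lemmas of \cite[\S2]{CMZpositivity}. Otherwise it can be argued directly: $h^0$ of powers is preserved under pullback by a birational morphism of normal projective varieties.

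With (i)--(iii) in hand, \Cref{P:SonCP-P13} gives that $K_{\overline{\mathcal X}/Z}+\bar{\mathbf\Delta}^{hor}$ is pseudo-effective, which is the statement of the lemma.
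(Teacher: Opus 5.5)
Your proposal is correct and follows the paper's proof: pull back the big line bundle from \Cref{L:SonCP-L3} along $b$ (bigness is preserved, for which the paper cites \cite[Lem.~2.5]{CMZpositivity}), then apply \Cref{P:SonCP-P13} to $\overline p$ using the inductive hypothesis and $\dim Z\ge 1$. You use the natural injection $b^*\Omega^1_{\mathcal X}(\log\mathbf\Delta)\hookrightarrow\Omega^1_{\overline{\mathcal X}}(\log\bar{\mathbf\Delta})$ where the paper asserts an equality; the injection is the more accurate statement, since equality fails for non-toroidal blow-ups, and it is all that is needed.
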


\begin{proof}
We have 
$$
\Omega^1_{\overline {\mathcal X}}(\log \bar {\mathbf \Delta}) = b^*\Omega^1_{\mathcal X}(\log \mathbf \Delta),
$$
and, since the pullback of a big line bundle 
 by the morphism $b$ stays big 
  (\cite[Lem.~2.5]{CMZpositivity}), 
 and we know that $\Omega^1_{\mathcal X}(\log \mathbf \Delta)$ contains a big line bundle (\Cref{L:SonCP-L3}), it is still true that some tensor power of
$\Omega^1_{\overline {\mathcal X}}(\log \bar {\mathbf \Delta})$  contains a big line bundle (and therefore contains a subsheaf with big determinant).  
It then follows from \Cref{P:SonCP-P13}, that $K_{\overline{\mathcal X}/Z}+\bar {\mathbf \Delta}^{hor}$ is pseudo-effective, as we are assuming \Cref{T:SonCP-T1} is true in dimension less than $\dim {\mathcal X}$.
\end{proof}

To establish that we in fact have a contradiction, we show the following:

\begin{pro}\label{P:I-am-not-Spock}
In the notation above,  $(K_{\overline{\mathcal X}/Z}+\mathbf \Delta^{hor}-\mathbf R(\overline{p}))\cdot \overline{\alpha}<0$.
\end{pro}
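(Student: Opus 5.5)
By the same reasoning as in \Cref{L:Spock-not-Spock}\ref{E:L:not-Spock}, the divisors on $\overline{\mathcal X}$ and on $\mathcal X$ agree away from the $b$-exceptional locus. Combined with \cite[Lem.~3.18(b)]{CMZslope_stability}, this reduces the claim to showing
$$(K_{\mathcal X/Z}+\mathbf \Delta^{hor}-\mathbf R(p))\cdot\alpha<0$$
on $\mathcal X$. All the divisors here are computed on the open substack $\mathcal U$, whose complement has codimension $\ge 2$ and where $p$ is a morphism. I will therefore work on $\mathcal U$ and extend. Movable classes do not see codimension-$2$ loci, so intersection numbers with $\alpha$ are determined by first Chern classes of reflexive sheaves on $\mathcal U$.

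By \eqref{E:c1TX/Z} and $\mathcal F=\mathcal T_{\mathcal X/Z}$ on $\mathcal U$, we have
$$c_1(\mathcal F)=-K_{\mathcal X/Z}+\mathbf R(p).$$
By \eqref{E:SonCP-c1-1}, we have
$$c_1(\mathcal F_{\mathbf \Delta})=c_1(\mathcal F)-\sum_{\mathbf D\subseteq\mathbf\Delta}a_{\mathbf D}\mathbf D.$$
The key geometric input is the value of $a_{\mathbf D}$ for each component:
\begin{itemize}
\item If $\mathbf D$ is horizontal, I expect $a_{\mathbf D}=1$. At a general point of $\mathbf D$, the map $p|_{\mathbf D}$ is dominant, so the fibre directions are not all tangent to $\mathbf D$. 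Hence $\mathcal F=\mathcal T_{\mathcal X/Z}\not\subseteq\mathcal T_{\mathcal X}(-\log\mathbf\Delta)$ at the generic point of $\mathbf D$.
\item If $\mathbf D$ is vertical (its image in $Z$ is a proper subvariety), then $\mathbf D$ is a union of components of fibres. Since $p$ is equidimensional on $\mathcal U$, vertical vector fields are tangent to $\mathbf D$ generically, so $a_{\mathbf D}=0$.
\end{itemize}
Both checks are local and reduce, after an étale cover, to the case of varieties treated in \cite[Lem.~12]{Schnell17Epi}. They give
$$c_1(\mathcal F_{\mathbf\Delta})=-(K_{\mathcal X/Z}+\mathbf\Delta^{hor}-\mathbf R(p)).$$

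Finally, by \Cref{L:SonCP-L6}, $\mu_\alpha(\mathcal F_{\mathbf\Delta})>0$. Hence $c_1(\mathcal F_{\mathbf\Delta})\cdot\alpha>0$, which gives
$$(K_{\mathcal X/Z}+\mathbf\Delta^{hor}-\mathbf R(p))\cdot\alpha<0.$$
Pulling back to $\overline{\mathcal X}$ as above then yields the proposition.

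The main obstacle is the computation of $a_{\mathbf D}$, in particular the vertical case. I must ensure that a vertical component of $\mathbf\Delta$ does not contribute even where the fibres of $p$ are non-reduced or where $\mathbf R(p)$ is supported. I will handle this with local coordinates on an étale chart in which $p$ is equidimensional and $\mathbf D=\{x_1=0\}$. Near a general point of $\mathbf D$, $p$ factors through a function of $x_1$ together with the remaining coordinates. The kernel of $Tp$ then consists of vector fields annihilating $x_1$ up to units, so they are logarithmic along $\mathbf D$. A second, minor point is to ensure that the intersection-number comparison in \cite[Lem.~3.18(b)]{CMZslope_stability} applies to $\mathbf R(\overline p)$ versus $\mathbf R(p)$, which again agree off the exceptional locus.
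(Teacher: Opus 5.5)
Your proof is correct, but it is organized differently from the paper's.

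The paper stays on $\overline{\mathcal X}$. It sets $\overline{\mathcal F}=\mathcal T_{\overline{\mathcal X}/Z}$ and $\overline{\mathcal F}_{\bar{\mathbf\Delta}}=\overline{\mathcal F}\cap\mathcal T_{\overline{\mathcal X}}(-\log\bar{\mathbf\Delta})$. It then checks at the level of sheaves that $b_*\overline{\mathcal F}_{\bar{\mathbf\Delta}}=\mathcal F_{\mathbf\Delta}$, which gives $c_1(\overline{\mathcal F}_{\bar{\mathbf\Delta}})\cdot\bar\alpha=c_1(\mathcal F_{\mathbf\Delta})\cdot\alpha>0$. Finally it combines $c_1(\overline{\mathcal F})=-K_{\overline{\mathcal X}/Z}+\mathbf R(\overline p)$ with the effectivity statement of \Cref{L:SonCP-L12}, applied to the everywhere-defined morphism $\overline p$.

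You instead push the problem down to $\mathcal X$ at the start, using the same divisor comparison the paper uses in \Cref{L:Spock-not-Spock}\ref{E:L:not-Spock}. You then do the Chern class bookkeeping on $\mathcal U$, where $\mathcal F|_{\mathcal U}=\mathcal T_{\mathcal U/Z}$ exactly by \Cref{R:AlgFolRefin}. Divisor classes on $\mathcal X$ are determined by their restrictions to $\mathcal U$, so your computation of the $a_{\mathbf D}$ gives the exact identity $c_1(\mathcal F_{\mathbf\Delta})=-(K_{\mathcal X/Z}+\mathbf\Delta^{hor}-\mathbf R(p))$.

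What your route buys:
\begin{itemize}
\item It is shorter: you never need the sheaf comparison $b_*\overline{\mathcal F}_{\bar{\mathbf\Delta}}=\mathcal F_{\mathbf\Delta}$, and you never meet $b$-exceptional components of $\bar{\mathbf\Delta}$.
\item You prove directly the inequality on $\mathcal X$ that the paper only extracts afterwards by combining this proposition with \Cref{L:Spock-not-Spock}\ref{E:L:not-Spock}. The detour through $\overline{\mathcal X}$ is needed only to match the statement as phrased.
\end{itemize}
What the paper's route buys: every divisor involved ($K_{\overline{\mathcal X}/Z}$, $\mathbf R(\overline p)$, $\bar{\mathbf\Delta}^{hor}$) comes directly from a morphism, rather than by extension across codimension two.

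Two small remarks.
\begin{itemize}
\item Your vertical case does not need equidimensionality. Take any nonzero local function $t$ on $Z$ vanishing on $p(\mathbf D)$. Then $p^*t=x_1^m g$ with $m\ge 1$ and $g$ a unit at a general point of $\mathbf D$. For $v\in\ker Tp$, the condition $v(p^*t)=0$ forces $x_1\mid v(x_1)$. This makes that step independent of the essential equidimensionality of $p$.
\item Intersection numbers with $\alpha$ are determined on $\mathcal U$ because divisor classes are. Movability of $\alpha$ plays no role there; it enters only through the positivity of $\mu_\alpha(\mathcal F_{\mathbf\Delta})$ in \Cref{L:SonCP-L6}.
\end{itemize}
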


\begin{proof}
To start, define
$$
\overline {\mathcal F}:=\mathcal T_{\overline {\mathcal X}/Z}=\ker \left( T\overline p : \mathcal T_{\overline {\mathcal X}} \to \overline p^*\mathcal T_Z\right)
$$
which is reflexive (use  \cite[Prop.~1.1]{Hart80} and the fact that $\mathcal T_{\overline {\mathcal X}}/\overline {\mathcal F}$, being a subsheaf of the locally free sheaf $\overline  p^*\mathcal T_Z$,  is torsion-free).
Then, with $\overline {\mathcal F}$ being a saturated subsheaf of $\mathcal T_{\overline {\mathcal X}}$, one can check that the intersection
$$
\overline {\mathcal F}_{\bar{\mathbf \Delta}}:=\overline {\mathcal F}\cap \mathcal T_{\overline {\mathcal X}}(-\log \bar {\mathbf \Delta})
$$
is a saturated subsheaf of $\mathcal T_{\overline {\mathcal X}}(-\log \bar {\mathbf \Delta})$ (this is local, and so reduces immediately to the case of varieties, which is established in the proof of \cite[Lem.~11]{Schnell17Epi}); therefore $\overline {\mathcal F}_{\mathbf \Delta}$ is a reflexive subsheaf of $ \mathcal T_{\overline {\mathcal X}}(-\log \bar {\mathbf \Delta})$.   One can also check that the pushforward of $\overline {\mathcal F}_{\mathbf \Delta}$ to $\mathcal X$ is $\mathcal F_{\mathbf \Delta}$, using \eqref{E:SonCP-eq3.2}, the fact that $\mathcal F_{\mathbf \Delta}$ is reflexive, and that the logarithmic tangent sheaf on $\overline{ \mathcal X}$ pushes forward to the logarithmic tangent sheaf on $\mathcal X$; see the proof of \cite[Lem.~11]{Schnell17Epi}.
Note that  using 
 \cite[Lem.~3.18(b)]{CMZslope_stability}, 
we have
$$
c_1(\overline {\mathcal F}_{{\mathbf \Delta}})\cdot \tilde\alpha =c_1(\overline {\mathcal F}_{ {\mathbf \Delta}})\cdot b^*\alpha  =c_1(\mathcal F_{\mathbf \Delta})\cdot  \alpha >0.
$$
 Recall from \eqref{E:c1TX/Z} the following formula for the first Chern
class of our foliation $\overline{\mathcal F} \subseteq \mathcal T_{\overline{\mathcal X}}$:
\begin{equation}\label{E:c1FF}
c_1(\overline{\mathcal F} ) = c_1(\mathcal T_{\overline{\mathcal X}/Z}) =-K_{\overline{\mathcal X}/Z} + \mathbf R(\overline{p}).
\end{equation}
Rather than computing the first Chern class of $\overline{\mathcal F}_{\bar{\mathbf \Delta}}$, we will 
use the fact that $\overline{\mathcal F}= \mathcal T_{\overline{\mathcal X}/Z}$ to estimate the difference
$$
c_1(\overline{\mathcal F})-c_1(\overline{\mathcal F}_{\bar{\mathbf \Delta}}) = c_1(\overline{\mathcal F}/\overline{\mathcal F}_{\bar{\mathbf \Delta}}).
$$
The following generalizes \cite[Lem.~12]{Schnell17Epi}:

\begin{lem}

\label{L:SonCP-L12}
 The class
$$
c_1(\overline{\mathcal F})-c_1(\overline{\mathcal F}_{\bar{\mathbf \Delta}})-\bar{\mathbf \Delta}^{hor}
$$
is effective.
\end{lem}

\begin{proof}
From equation \eqref{E:SonCP-c1-1}, 
it suffices to show  that $\overline{\mathcal F}\ne  \overline{\mathcal F}_{\bar{\mathbf \Delta}}$  exactly at the generic points of each of the irreducible
components of $\bar{\mathbf \Delta}^{hor}$. This is ultimately a consequence of the fact that $\overline{\mathcal F}= \mathcal T_{\overline{\mathcal X}/Z}$, and is a local computation that  reduces immediately to the case of varieties, which is done in the proof of \cite[Lem.~12]{Schnell17Epi}.  This completes the proof of \Cref{L:SonCP-L12}.
\end{proof}

One concludes that
\begin{equation}\label{E:SonCP-E4.5}
-\left(K_{\overline{\mathcal X}/Z}+\bar{\mathbf \Delta}^{hor}-\mathbf R(\overline{p})\right)\cdot \overline{\alpha} = \left(c_1(\overline{\mathcal F})-\bar{\mathbf \Delta}^{hor}\right)\cdot \overline{\alpha} \ge c_1(\overline{\mathcal F}_{\bar{\mathbf \Delta}})\cdot \overline{\alpha} >0,
\end{equation}
where the first equality is from \eqref{E:c1FF},  the first inequality is from  \Cref{L:SonCP-L12}, and the last inequality is \Cref{L:SonCP-L6}.  This completes the proof of \Cref{P:I-am-not-Spock}.
\end{proof}

\subsection{Proof of \Cref{T:SonCP-T1} and \Cref{T:SonCP-T4}}

\begin{proof}[Proof of \Cref{T:SonCP-T1} and \Cref{T:SonCP-T4}]
First, recall the general strategy of the proof from \S \ref{S:SContra}.  We are assuming we have proven \Cref{T:SonCP-T1} and \Cref{T:SonCP-T4} in dimension less than $\dim \mathcal X$, and, to complete the proof by induction, we are trying to prove that  \Cref{T:SonCP-T1} and \Cref{T:SonCP-T4} hold for $\mathcal X$.   We saw that it sufficed to prove 
\Cref{T:SonCP-T4} for $\mathcal X$.  We then proceeded to prove  
\Cref{T:SonCP-T4} for $\mathcal X$ by contradiction.  Under the assumption that \Cref{T:SonCP-T4} was false for $\mathcal X$ 
we showed that there exists a dominant rational map $p:\mathcal X\dashrightarrow Z$ to a smooth projective variety $Z$ of dimension at least $1$, as well as a resolution $\overline p:\overline{\mathcal X}\to Z$ of the rational map $p$, such that the following two things were were true.  
First, combining  \Cref{L:I-am-Spock} with \Cref{L:Spock-not-Spock}\ref{E:L:Spock}, we obtained that  the divisor  $K_{\mathcal X/Z}+\mathbf \Delta^{hor}-\mathbf R(p)$ was  pseudo-effective.
Second, combining  \Cref{P:I-am-not-Spock} with \Cref{L:Spock-not-Spock}\ref{E:L:not-Spock}, we obtained that   $K_{\mathcal X/Z}+\mathbf \Delta^{hor}-\mathbf R(p)$ was  not pseudo-effective.
This is a contradiction, and therefore \Cref{T:SonCP-T4} is true for $\mathcal X$, completing the proof.
\end{proof}

\begin{rem}
In light of the arguments in \S\ref{S:Contradiction}, one might try to prove \Cref{T:SonCP-T4} by reducing to the morphism $\overline  p:\overline {\mathcal X}\to Z$; i.e., replacing the rational map $p:\mathcal X\dashrightarrow Z$ with $\overline p:\overline {\mathcal X}\to Z$.  Indeed, we showed in \Cref{L:Spock-not-Spock}\ref{E:L:not-Spock} that $K_{\overline{\mathcal X}/Z}+\bar{\mathbf \Delta}^{hor}-\mathbf R(\overline{p})$ was not pseudo-effective, and so one might hope that one could obtain a contradiction by showing that $K_{\overline{\mathcal X}/Z}+\bar{\mathbf \Delta}^{hor}-\mathbf R(\overline{p})$ was pseudo-effective.  Certainly we showed in \Cref{L:Spock-not-Spock}\ref{E:L:Spock} that $K_{\overline{\mathcal X}/Z}+\bar{\mathbf \Delta}^{hor}$ is pseudo-effective.  Unfortunately, $\overline p$ may not be equidimensional, and so one cannot apply  \Cref{T:CKT21-7.1}  to conclude that  $K_{\overline{\mathcal X}/Z}+\bar{\mathbf \Delta}^{hor}-\mathbf R(\overline{p})$ is pseudo-effective.  
There are two approaches one might try to take working directly with $\overline p$: 
\begin{enumerate}
\item First, one might hope that we could use the fact that $K_{\mathcal X/Z}+\mathbf \Delta^{hor}-\mathbf R(p)$ was was pseudo-effective to show that $K_{\overline{\mathcal X}/Z}+\bar{\mathbf \Delta}^{hor}-\mathbf R(\overline{p})$ was pseudo-effective; unfortunately, the converse of   \cite[Cor.~3.19]{CMZslope_stability} may fail, which in this context means that it is hard to rule out there being a moving class on $\overline {\mathcal X}$ that is not the pull back of a moving class on $\mathcal X$, and which pairs negatively with $ K_{\overline{\mathcal X}/Z}+\bar{\mathbf \Delta}^{hor}-\mathbf R(\overline{p})$.

\item   
As a second approach, we believe that one could try to argue directly with $\overline p$ by extending various notions for smooth projective varieties to the case of stacks; e.g., make $\overline p$ into a  $\bar {\mathbf \Delta}$-neat fibration (see \cite[Def.~2.10]{CP15}) or a \emph{prepared} morphism (see \cite[1.1.3]{Campana2004Orbifolds})  and then follow the argument of \cite[Thm.~3.4]{CPFol19}, which utilizes the arguments of \cite[Thm.~2.11]{CP15} and   \cite[\S 4]{Campana2004Orbifolds}.  We felt the approach we took in the proof above is more direct. 
\end{enumerate}

\end{rem}


\ifArxiv

\subsection{Some related examples}
Here we recap and expand slightly on \cite[Rem.~14]{Schnell17Epi}.  
The point is that many  of the arguments for proving \Cref{T:SonCP-T1} and \Cref{T:SonCP-T4} (for example, the proof of \Cref{L:SonCP-L10}), go through when some
tensor power of $\Omega^1_{\mathcal X}(\log \mathbf \Delta)$ 
 contains a subsheaf with pseudo-effective determinant (i.e., rather than needing to make the stronger assumption that the subsheaf have big determinant).   Here we explain why one cannot weaken the hypotheses of  \Cref{T:SonCP-T1} and \Cref{T:SonCP-T4} in this way, even with some natural weakening of the conclusions.

 First we observe that \Cref{T:SonCP-T1} fails if one replaces the hypothesis 
 that some positive tensor power of $\Omega^1_{\mathcal X}(\log \mathbf \Delta)$ contains sheaf with big determinant, with the condition that some positive tensor power contains a sheaf with pseudo-effective determinant:
 
 \begin{exa}
For any elliptic curve $E$, one has that $\Omega^1_E$ contains a pseudo-effective line bundle, $\mathcal O_E$, but $K_E$ is not big.  
\end{exa}

\begin{rem} 
Despite the last example, we do have the following.  For a smooth projective curve $C$, if $\Omega^1_C(\log \Delta) $ contains a pseudo-effective line bundle, then $K_C+\Delta$  and any (quotient of any) tensor power of $\Omega^1_C(\log \Delta) $ are  pseudo-effective. \end{rem}

From the previous remark, one could imagine a natural generalization of 
  \Cref{T:SonCP-T1} and \Cref{T:SonCP-T4}  under the weaker hypothesis that  some
tensor power of $\Omega^1_{\mathcal X}(\log \mathbf \Delta)$ 
 contains a subsheaf with pseudo-effective determinant.  For instance, one could hope that under that weaker hypothesis, \Cref{T:SonCP-T4} might still hold, and \Cref{T:SonCP-T1} might hold if one replaced the conclusion that $K_{\mathcal X}+\mathbf \Delta$ be big with the conclusion that it only be pseudo-effective.  The following example shows that this is not the case:

 \begin{exa}
On the product $X=C\times \mathbb P^1$ of a smooth projective 
curve $C$ of genus $g>0$ and $\mathbb P^1$, one has the (split) short exact sequence
$$
0\to \pi_1^*K_C\to \Omega^1_{X}\to \pi_2^* K_{\mathbb P^1}\to 0,
$$
so that $ \Omega^1_{X}$ contains a pseudo-effective line bundle.  But clearly $K_{X}=\pi_1^*K_C+\pi_2^*K_{\mathbb P^1}$ is not pseudo-effective,  and $ \Omega^1_{X}$ has a quotient, $\pi_2^*K_{\mathbb P^1}$,   that is not pseudo-effective.  
\end{exa}
 
 \begin{rem} Note that in the example above, $X$ is uniruled.  In fact, \cite[Thm.~0.1]{CPT11} states that if $X$ is a smooth projective variety that is not uniruled, then any torsion-free quotient of any positive tensor power of $\Omega^1_X$ has pseudo-effective determinant. 
\end{rem}

\begin{rem}
What
happens in the example above, following through the proofs of  \Cref{T:SonCP-T1} and \Cref{T:SonCP-T4},  is that the argument in the last step in the proof of \Cref{P:SonCP-P13} breaks down: when $\mathcal L$ is not big, it may
be that $j = 0$, and then the inductive argument does not work.  More precisely, in the example, following through the proofs of  \Cref{T:SonCP-T1} and \Cref{T:SonCP-T4}, consider the case where one takes  $\alpha$ to be the class of a fiber of the first projection.  Then $\mathcal F=\mathcal F_\Delta= \pi_1^*\mathcal T_C$, and $p=\pi_1:X=C\times\mathbb P^1\to C$ is the first projection (i.e., $Z=C$).      One certainly has the inequality \eqref{E:SonCP-E4.5}, i.e., as $K_{X/Z}=\pi_2^*K_{\mathbb P^1}$, $\Delta^{hor}=0$, and $R(p)=0$, one has  $-K_{X/Z}.\alpha=2>0$.  But clearly, the conclusion of \Cref{P:SonCP-P13}, that $K_{X/Z}$ be pseudo-effective, does not hold.  Where the proof goes wrong is as follows.  In our case, the line bundle  $\mathcal L$ in the proof is $\pi_1^*K_C$, which is pseudo-effective, but not big.  Everything goes through, up through \eqref{E:SonCP-EP13}. However, without the assumption that $\mathcal L$ be big, we cannot conclude that $j>0$.  In fact, since $\mathcal L_{X_z}= \mathcal O_{X_z}$, and $\Omega^1_{X_z}=\mathcal O_{\mathbb P^1}(-2)$, a non-zero morphism $\mathcal L_{X_z}\to (\Omega^1_{ X_z})^{\otimes j}$ exists only if  $j\le 0$.  In other words, $j=0$ in the proof, and one cannot then use induction.   
\end{rem}

\else
\begin{rem}
 Many  of the arguments for proving \Cref{T:SonCP-T1} and \Cref{T:SonCP-T4} (for example, the proof of \Cref{L:SonCP-L10}), go through when some
tensor power of $\Omega^1_{\mathcal X}(\log \mathbf \Delta)$ 
 contains a subsheaf with pseudo-effective determinant (i.e., rather than needing to make the stronger assumption that the subsheaf have big determinant).  We refer the reader to \cite[Rem.~14]{Schnell17Epi}, which   explains why one cannot weaken the hypotheses of  \Cref{T:SonCP-T1} and \Cref{T:SonCP-T4} in this way, even with some natural weakening of the conclusions.  
\end{rem}

\fi 

 \bibliographystyle{amsalpha}
 \bibliography{mhm_bib}

\end{document}